\newcommand{\R}{{\mathbb R}}
\newcommand{\Rn}{{\mathbb R}^n}
\newcommand{\range}{\mathop{\mathrm{range}}}
\newcommand{\T}{{\mathop{\mathrm{T}}}}
\newcommand{\res}{\mathop{\mathrm{Res}}}
\newcommand{\ii}{\mathrm{i}}
\newcommand{\la}{\lambda}
\theoremstyle{definition}
 \newtheorem{definition}{Definition}[section]
 \newtheorem{remark}{Remark}[section]
\theoremstyle{plain}
 \newtheorem{theorem}{Theorem}[section]
 \newtheorem{lemma}{Lemma}[section]
 \newtheorem*{lemma*}{Lemma}
 \newtheorem{corollary}{Corollary}[section]
\newtheorem{proposition}{Proposition}[section]
\begin{document}

\title{\textbf{Combined methods for solving time-varying semilinear differential-algebraic equations with the use of spectral projectors and applications}}

\author{Maria Filipkovska (Filipkovskaya)\medskip \\
  \it\small Institute of Analysis and Scientific Computing, Technische Universit\"{a}t Wien, and \\
  \it\small B. Verkin Institute for Low Temperature Physics and Engineering  \\
  \it\small of the National Academy of Sciences of Ukraine, and   \\ \it\small Friedrich-Alexander-Universit\"{a}t Erlangen-N\"{u}rnberg    \\
 \small filipkovskaya@ilt.kharkov.ua;\;  maria.filipk@gmail.com }%

\date{ }
\maketitle

 \begin{abstract} 
Two combined methods for computing solutions of time-varying semilinear differential-algebraic equations (descriptor systems) are obtained. When constructing the methods, time-varying spectral projectors which can be found numerically are used. This enables one to numerically solve the differential-algebraic equation (DAE) in the original form without additional analytical transformations. The convergence and correctness of the developed methods are proved.  The methods are applicable to the semilinear DAEs with the continuous nonlinear part which may not be differentiable in time.  The global Lipschitz condition and other conditions of this kind are not used in the presented theorems on the global solvability of DAEs and on the convergence of the methods. This extends the scope of the methods.  The obtained theorems ensure both the existence of a unique global exact solution and the convergence of the methods, which enables one to compute an approximate solution on any given time interval. Numerical examples illustrating the capabilities of the methods and their effectiveness in various situations are provided. To demonstrate the practical application of the obtained methods and theorems, the numerical and theoretical analyses of mathematical models of the dynamics of electric circuits are carried out. It is shown that their results are consistent.
\end{abstract} 
 
{\small  \textbf{Key words:}\; descriptor system; differential-algebraic equation; numerical method; global solution; time-varying operator pencil; spectral projector.}

\smallskip
{\small\texttt{MSC2000:}{65L20, 65L80, 34A09, 34A12, 47N40}

}

\maketitle

 \section{Introduction}\label{Intro}

Consider an implicit differential equation of the form
\begin{equation}\label{DAE}
\frac{d}{dt}[A(t)x(t)]+B(t)x(t)=f(t,x(t)),
\end{equation}
where $t\in [t_+,\infty)$, $t_+\ge 0$, $f\in C([t_+,\infty)\times\Rn,\Rn)$ and $A,\, B\in C([t_+,\infty),\mathrm{L}(\Rn))$  \,(\,$\mathrm{L}(X,Y)$ denotes the space of continuous linear operators acting from the vector space $X$ to the vector space~$Y$, and $\mathrm{L}(X):=\mathrm{L}(X,X)$).
The operators $A(t)$, $B(t)$ (depending on the parameter $t$) can be degenerate (i.e., noninvertible).  Equations of the type \eqref{DAE} with a degenerate (for some $t$) operator $A(t)$ are called \emph{degenerate differential equations} or \emph{differential-algebraic equations} (\emph{DAEs}).  DAEs of the form \eqref{DAE} are commonly referred to as \emph{semilinear}. Since the operators $A(t)$, $B(t)$ are time-varying, equation \eqref{DAE} is called a \emph{time-varying} semilinear DAE or a time-varying degenerate differential equation (DE).  In what follows, for the sake of generality, equation~\eqref{DAE}, where $A\in C([t_+,\infty),\mathrm{L}(\Rn))$  is an arbitrary operator function (i.e., the operator $A(t)$ is not necessarily degenerate), will be called a \emph{time-varying semilinear differential-algebraic equation}.

The initial condition for the DAE is given by
 \begin{equation}\label{ini}
x(t_0)=x_0,
 \end{equation}
where $t_0\ge t_+$. The presence of a degenerate operator at the derivative in the DAE means the presence of algebraic constraints, namely, the graphs of the solutions must lie in the manifold generated by the ``algebraic part'' of the DAE and the initial points $(t_0,x_0)$ must also belong to this manifold (see Remark~\ref{RemConsistInis}).

A function $x\in C([t_0,t_1),\Rn)$\, ($[t_0,t_1)\subseteq [t_+,\infty)$) is said to be a \emph{solution of equation  \eqref{DAE} on $[t_0,t_1)$} if the function  $A(t)x(t)$ is continuously differentiable on $[t_0,t_1)$ and $x(t)$ satisfies \eqref{DAE} on $[t_0,t_1)$.    If the solution $x(t)$ of \eqref{DAE} satisfies the initial condition \eqref{ini}, then it is called a \emph{solution of the initial value problem} (\emph{the IVP}) \eqref{DAE}, \eqref{ini}.

Notice that if we consider a time-varying semilinear DAE of the form
\begin{equation}\label{DAE1}
A(t)\frac{d}{dt}x(t)+B(t)x(t)=f(t,x(t))
\end{equation}
instead of \eqref{DAE}, then we must require greater smoothness for its solution than for \eqref{DAE}. Namely, a function $x\in C^1([t_0,t_1),\Rn)$ satisfying equation \eqref{DAE1} on $[t_0,t_1)$ is called a solution of \eqref{DAE1}.

DAEs or degenerate DEs are also called descriptor equations, algebraic-differential systems and differential equations (or dynamical systems) on manifolds.  These equations are used to describe mathematical models in control theory, radioelectronics,  cybernetics, mechanics, economics, ecology, chemical kinetics and gas industry (see, e.g., \cite{Brenan-C-P,BKT,CheAkLe,Kunkel_Mehrmann, Lamour-Marz-Tisch,Riaza,Vlasenko1}). It is known that the dynamics of electrical circuits is modeled using DAEs which, in general, cannot be reduced to explicit ordinary differential equations (ODEs).  In Sections~\ref{ApplPIMM} and \ref{ApplDE} we will consider two mathematical models having the form of the DAE \eqref{DAE}, which describes transient processes in electrical circuits.

In the present paper, two combined numerical methods for the time-varying semilinear DAEs, which have the first and second orders of convergence, are developed. To obtain these methods, we use, in particular, the time-varying spectral projectors described in Sections \ref{IndexProjector} and \ref{NumProj} and the scheme with recalculation (the ``predictor-corrector'' scheme).
The methods are called \emph{combined} since each of them is essentially a combination of two methods, namely, a difference method for the ``differential part'' and an iteration method for the ``algebraic part'' of the DAE (this combination is used in method 1 presented in Section \ref{Num-meth}), and in addition recalculation is used in method 2 presented in Section \ref{Num-Mod_meth}.
The theorems on the convergence and the orders of accuracy of the methods are proved in Sections~\ref{Num-meth}, \ref{Num-Mod_meth}. These theorems contain conditions for the existence and uniqueness of exact solutions, which in conjunction with conditions for the convergence of the methods ensures the correctness of the methods as well.

Earlier, numerical methods for time-invariant semilinear DAEs were obtained (see \cite{Fil.CombMeth}) using time-invariant spectral projectors. The presence of time-varying operators in the DAE \eqref{DAE} (as well as in \eqref{DAE1}) significantly complicates the construction of the numerical methods and the proof of their convergence; however, the approach developed for the proof of the existence and uniqueness of global solutions in \cite{Fil.DE-1} enables one to solve this problem as well. Furthermore, in \cite{Fil.CombMeth} an approximation by a centered difference was used in obtaining the second-order method, but this increases the quantitative characteristic of stability and, accordingly, a smaller step size may be required in calculations. Therefore, in the present paper, instead of a centered difference we use the recalculation technique to achieve the second order of convergence.

The \emph{main aims of the present work} were: (1) to develop numerical methods, which have certain advantages described below, for time-varying semilinear DAEs, and (2) to demonstrate that the analytical methods, developed earlier to study the solvability and stability of time-varying semilinear DAEs, can be applied to the development of the methods of numerical analysis. The numerical and theoretical analyses of mathematical models, presented in Sections \ref{ApplPIMM} and \ref{ApplDE}, are a demonstration of the effectiveness of the comprehensive analysis of practical problems by using the methods presented in \cite{Fil.DE-1,Fil.DE-2} and the present paper.

It is clear that for different problems related to the numerical solution of various types of DAEs different approaches may be more or less suitable (see the literature mentioned below).

The obtained numerical methods, theorems on their convergence and other results presented in the paper have the following \emph{advantages and distinctive features}:
 \begin{enumerate}[1.]
\leftmargin=0pt
\item  The developed methods are applicable to DAEs of the type \eqref{DAE} and \eqref{DAE1} with the continuous nonlinear part which may not be differentiable in $t$ (see Propositions~\ref{remNum-meth},~\ref{remModNum-meth}).  This is important for applications, since such equations arise in various practical problems. For example, the functions of currents and voltages in electric circuits may not be differentiable (or be piecewise differentiable) or may be approximated by nondifferentiable functions.  As examples, nonsinusoidal currents and voltages of the ``sawtooth'', ``triangular'' and ``rectangular'' shapes  \cite{Erickson-Maks} can be considered, but more complex shapes are also occurred.  In Sections \ref{NumAnalPIMM} and \ref{NestElCirc-NumAnal}, the examples of numerical solutions for electrical circuits with nondifferentiable (on a given time interval) functions of voltages are presented (Fig.~\ref{ExPila_h_001} and~\ref{Example6_1-3}).

\item  The presented theorems and propositions on the convergence of the methods give conditions which ensure the existence of a unique exact solution of the IVP for the DAE on $[t_0,\infty)$ and enable one to compute approximate solutions on any given time interval $[t_0,T]$.
    This is important to note, since when proving the convergence of a method one often assumes in advance that there is a unique exact solution on the interval where the computation will be carried out, while the calculation of the allowable length of this interval is a separate problem. Also, one often uses theorems allowing one to prove the existence and uniqueness of an exact solution only on a sufficiently small (local) time interval, but in this case the numerical method can be correctly applied only on this small interval.

\item The global Lipschitz condition and other conditions of this kind, including the global condition of the contractivity (the Lipschitz condition with a constant less than 1), are not used in the theorems on the DAE global solvability and on the convergence of the methods. The global Lipschitz condition is not fulfilled for mathematical models of electrical circuits with certain nonlinear parameters (e.g., in the form of power functions mentioned in Section \ref{ApplPIMM}, \ref{ApplDE}).
    In general, various types of DEs with non-Lipschitz or non-globally Lipschitz functions (see, e.g., \cite{IzgiCetin,TambueMukam} and references therein) arise in applications.

\item It is not required that the DAE in question (i.e.,  \eqref{DAE} or \eqref{DAE1}) be a regular DAE of tractability index 1 (see the definitions in \cite[p.~319-320]{Lamour-Marz-Tisch}), i.e., that  $\lambda A(t)+B(t)-\frac{\partial f}{\partial x}(t,x)$ is a regular pencil of index~1, or that the DAE has index 1 in any other sense given in the literature on DAEs \cite{Kunkel_Mehrmann,Riaza,Ascher-Petz,Brenan-C-P,Hairer-W}. Instead, the pencil  $\lambda A(t)+B(t)$ associated with the linear part of the DAE is required to be a regular pencil of index not higher than 1. The definition of the pencil index and the discussion of this issue are given in Section \ref{Sect-Index}.
    Note that the purpose of the paper was to construct numerical methods for the DAE (i.e., equation \eqref{DAE} or \eqref{DAE1} with the degenerate operator $A(t)$\,). Since $\lambda A(t)+B(t)$ has index~0 in the case when $A(t)$ is nondegenerate (for all~$t$), we will formulate statements (theorems, lemmas, etc.) and carry out proofs for the case when $A(t)$ is degenerate but not identically zero and $\lambda A(t)+B(t)$ is a regular pencil of index~1.
    Of course, for the cases when $A(t)$ is nondegenerate or zero (for all~$t$), the results obtained in the paper remain valid.

\item The time-varying spectral projectors used in developing the numerical methods can be numerically found, as shown in Section \ref{NumProj} (the description of the projectors is given in Section \ref{IndexProjector}), which enables one to numerically solve the DAE \eqref{DAE} in the original form, i.e., additional analytical transformations are not required for the application of the methods.
\end{enumerate}

Generally, there are already a lot of works on numerical methods for solving various types of DAEs. Key ideas and main approaches to the numerical solution of DAEs are described in the monographs \cite{Ascher-Petz,Brenan-C-P,Hairer-W,Kunkel_Mehrmann,Lamour-Marz-Tisch,Riaza} (we also refer to \cite{Fil.CombMeth,Linh-Mehrmann,Knorrenschild} for an overview).
Since we consider the semilinear DAEs with regular characteristic pencils of index 1, which can be reduced to semi-explicit DAEs with the help of spectral projectors (see Section \ref{IndexProjector}), we will mainly discuss index-1 DAEs, including semi-explicit DAEs of index 1.
The main idea of many works is the reduction of a DAE to an ODE or the replacement of a DAE by a stiff ODE for the further application of methods for solving ODEs, or the use of these methods directly for solving DAEs.
For example, to solve autonomous semi-explicit DAEs of index 1, the $\varepsilon$-embedding method is applied in \cite{Hairer-W,Knorrenschild}. Also, for the nonautonomous semi-explicit DAE $dy/dt= f(t,y,z)$, $0=g(t,y,z)$ of index 1 (it means that $[\partial g/\partial z]^{-1}$ exists and is bounded in a neighborhood of the exact solution \cite{Brenan-C-P}\,), \cite{Brenan-C-P,Kunkel_Mehrmann, Ascher-Petz,Knorrenschild} provide the $\varepsilon$-embedding method in which the Runge-Kutta (RK) method is applied to the corresponding stiff system $dy/dt= f(t,y,z,\varepsilon)$, $\varepsilon\, dz/dt= g(t,y,z,\varepsilon)$, where $\varepsilon>0$ is a small parameter, and then $\varepsilon = 0$ is set in the resulting formulas. A solution of the stiff system of ODEs  $dy/dt=f(t,y,z,\varepsilon)$, $\varepsilon\, dz/dt=g(t,y,z,\varepsilon)$ ($\varepsilon>0$) in general does not approach the solution of the reduced DAE (obtained by setting $\varepsilon=0$)\; $dy/dt=f(t,y,z,0)$, $0=g(t,y,z,0)$; however, under certain conditions it is possible to obtain a stiff ODE system whose solutions approximate solutions of the reduced DAE \cite{Knorrenschild,Brenan-C-P}. Note that in order to apply the $\varepsilon$-embedding method, it is necessary to reduce a semi-explicit DAE to the corresponding stiff ODE system that requires a sufficient smoothness of the nonlinear function in the DAE (the continuity of the function is not enough here).
As mentioned above, one of the advantages of the methods proposed in the present paper is that they are applicable to semilinear DAEs with the continuous nonlinear part which may not be differentiable in $t$.
The backward differentiation formulas (BDF), RK methods and general linear multi-step methods were presented for semi-explicit DAEs and regular nonlinear DAEs of index 1 in \cite{Kunkel_Mehrmann,Lamour-Marz-Tisch,Ascher-Petz,Brenan-C-P} and also for regular time-invariant quasilinear DAEs in \cite{Hairer-W}. In \cite{Kunkel_Mehrmann,Linh-Mehrmann}, the collocation RK method, the BDF  method and a half-explicit method were given for regular strangeness-free DAEs.  The RK methods are also used to solve semi-explicit DAEs of index 2 or 3 in \cite{Ascher-Petz,Brenan-C-P,Hairer-W} and \cite{SatoMdA}.
There exist numerical methods for the reduction of the index of DAEs (see, e.g., \cite{Ascher-Petz,Kunkel_Mehrmann,Lamour-Marz-Tisch,Hairer-W}).

The stiff ODE systems mentioned above arise in various applications. For example, the ODE systems containing fast and slow motions (variables) are stiff ODE systems which describe motions of a rigid body about its center of mass \cite{CheAkLe,BogSob}. The averaging method enabling one to obtain approximate solutions of the stiff (perturbed) system on a large time interval is presented in \cite{CheAkLe}, and the method of integral manifolds, which allows one to reduce the original perturbed system to an ODE, has been considered in \cite{BogSob}. A solution of the reduced DAE can be a good approximation to the solution of the corresponding stiff ODE system under certain conditions.

The projector based analysis of DAEs and the application of BDF, RK and general linear methods to DAEs with properly stated leading term is described in \cite{Lamour-Marz-Tisch}. The technique described in this monograph uses the concept of the tractability index. As mentioned above, under the restrictions used in the present paper, the DAEs may not have tractability index 1. Furthermore, another projector method is used in this paper.

\textbf{The paper has the following structure:}
In Section~\ref{IndexProjector}, we consider constraints on the operator coefficients (more precisely, on the characteristic operator pencils) of the DAEs, define the index of a regular operator pencil, and describe the spectral projectors and the method for reducing the time-varying semilinear DAE to a semi-explicit form by using these projectors.
Section \ref{GlobSolv} provides theorems and propositions (proved in prior papers), which give conditions for the existence, uniqueness and boundedness of the exact global solutions, as well as certain definitions and the remarks on the solution properties and the theorem applications.
These results were obtained in the prior papers, but since they are necessary for the formulation and proof of the main results of the present paper, we provide them here.
In Section~\ref{NumMs}, an algorithm for calculating the spectral projectors is described, and two combined methods for solving the time-varying semilinear DAE are obtained, as well as the theorems on the convergence, correctness and the orders of convergence (the global errors) of the methods are proved.
In addition, the important propositions on the convergence of the methods, when weakening the smoothness requirements for the nonlinear functions in the DAEs, are given in Subsections~\ref{Num-meth}, \ref{Num-Mod_meth}.  In Sections \ref{ApplPIMM} and \ref{ApplDE}, the theoretical and numerical analyses of mathematical models of the dynamics of electric circuits are carried out. This, on the one hand, demonstrates the application of the obtained theorems and methods to real physical problems, and on the other hand, shows that the theoretical and numerical results are consistent.  In Section~\ref{CompareMeth}, numerical examples illustrating the proved convergence of the methods are given, and the comparative analysis of the methods is carried out.

In the paper, \emph{the following notation is used}:  $I_X$ denotes the identity operator in the space $X$; $\delta_{ij}$ is the Kronecker delta; $C(0,\infty)=C((0,\infty),(0,\infty))$.
A function, for example $f$, is often denoted by the same symbol $f(x)$ as its value at the point $x$ in order to explicitly indicate its argument (or arguments), but it will be clear from the context what exactly is meant. Notice that when the formula breaks at the multiplication sign, we denote it by~$\times$.

 \section{Preliminaries}

 \subsection{Index of a regular pencil of operators, spectral projectors and their application}\label{IndexProjector}

 \subsubsection{Index of a regular pencil $\lambda A(t)+B(t)$}\label{Sect-Index}

Consider the \emph{operator pencil $\lambda A(t)+B(t)$} ($\lambda$ is a complex parameter) \emph{associated with the linear part} $\dfrac{d}{dt}[A(t)x(t)]+B(t)x(t)$ of the DAE \eqref{DAE} or with the linear part $A(t)\dfrac{d}{dt}x(t)+B(t)x(t)$ of the DAE \eqref{DAE1}. The pencil associated with the linear part of the DAE is called \emph{characteristic}.

In the present paper, it is assumed that $\lambda A(t)+B(t)$ is a \emph{regular pencil of index not higher than~1} (index 0 or 1). This means that for each $t\ge t_+$ the pencil is regular, i.e., the set of its regular points is not empty (for the regular points $\lambda$ there exists the resolvent of pencil $(\lambda A(t)+B(t))^{-1}$), and there exist functions $C_1,\, C_2\colon [t_+,\infty)\to (0,\infty)$ such that for each $t\in[t_+,\infty)$ the pencil resolvent
 $$
R(\lambda,t):=(\lambda A(t)+B(t))^{-1}
 $$
satisfies the condition  (cf. \cite{Vlasenko1,Vlasenko2000})
 \begin{equation}\label{index1}
\|R(\lambda,t)\|\le C_1(t) \quad \text{for all $\la$ with} \quad  |\lambda|\ge C_2(t)
 \end{equation}
(hence, for any fixed $t$ the resolvent is bounded in a neighborhood of infinity). The condition \eqref{index1} means that either the point $\mu=0$ is a simple pole of the resolvent $(A(t)+\mu B(t))^{-1}$ of the pencil $A(t)+\mu B(t)$ (this is equivalent to the fact that $\lambda= \infty$ is a removable singularity of $R(\lambda,t)$), or $\mu = 0$ is a regular point of the pencil ${A(t)+\mu B(t)}$ (i.e., the operator $A(t)$ is nondegenerate).

If $\mu =0$ is a regular point of the pencil $A(t)+\mu B(t)$ for each $t$, then $\lambda A(t)+B(t)$ is a \emph{regular pencil of index~0}.\,
If $\mu =0$ is a simple pole of the resolvent $(A(t)+\mu B(t))^{-1}$ for each $t$, i.e., $A(t)$ is degenerate for all $t$ and the condition \eqref{index1} is satisfied,  then $\lambda A(t)+B(t)$ is a \emph{regular pencil of index~1}. In the general case, the definition for the index of a regular pencil is given below.

In \cite[Section 6.2]{Vlasenko1}, for the regular pencil $\lambda A+B$ of time-invariant square matrices $A$ and $B$, the maximum length of the chain of root vectors of the pencil $A+\mu B$ at the point $\mu=0$ is referred to as the \emph{index} of the pencil $\lambda A+B$.
This definition can be naturally associated with the following generalization (cf. \cite[Section 3.3.1]{Vlasenko1}, \cite{Vlasenko2000}) of the condition \eqref{index1}: Assume that the pencil $\lambda A(t)+B(t)$ is regular (for each $t\ge t_+$) and there exist functions $C_1,\, C_2\colon [t_+,\infty)\to (0,\infty)$ such that for each $t\in[t_+,\infty)$,
\begin{equation}\label{pencil_index}
\|R(\lambda,t)\|\le C_1(t)|\lambda|^{\nu-1} \quad \text{for all $\la$ with}\quad |\lambda|\ge C_2(t)\qquad (\nu\in{\mathbb N}),
\end{equation}
then $\lambda A(t)+B(t)$ is called a \emph{regular pencil of index not higher than $\nu$}.  It follows from \eqref{pencil_index} that
$$
\big\|\big(A(t)+\mu B(t)\big)^{-1}\big\|\le C_1(t)\, |\mu|^{-\nu}\quad \text{for all $\mu$ with}\quad 0 <|\mu|\le \big(C_2(t)\big)^{-1}.
$$
Accordingly, we define the index of a regular pencil $\lambda A(t)+B(t)$ in the following way.

\begin{definition}
Let $A,\, B\colon \mathcal{T}\to \mathrm{L}(X,Y)$ (or $A(t)$,~$B(t)$ are matrices corresponding to the linear operators $A(t),\,B(t)\in \mathrm{L}(X,Y)$ with respect to some bases in the spaces $X$, $Y$, which depend on the parameter $t\in \mathcal{T}$), where $X=Y=\Rn$ or $={\mathbb C}^n$ and $\mathcal{T}\subseteq\R$ is some interval. Also, let the operator (or matrix) pencil $\lambda A(t)+B(t)$, where $\lambda$ is a complex parameter, be regular for each $t\in \mathcal{T}$.
For a fixed $t\in \mathcal{T}$, if the point $\mu=0$ is a pole of the resolvent $(A(t)+\mu B(t))^{-1}$, then the order $\nu$ ($\nu\in {\mathbb N}$) of the pole is called the \textbf{\emph{index of the regular pencil $\lambda A(t)+B(t)$}}, and in the case when $\mu=0$ is a regular point of the pencil $A(t)+\mu B(t)$, the \emph{index of the regular pencil $\lambda A(t)+B(t)$ is $\nu=0$}.
If the pencil $\lambda A(t)+B(t)$ has index $\nu$ for each $t\in \mathcal{T}$, then we will say that $\lambda A(t)+B(t)$ is a \textbf{\emph{regular pencil of index $\nu$}} ($\nu\in{\mathbb N}\cup\{0\}$).
\end{definition}

Various notions of index of a matrix (operator) pencil, index of a DAE and a relationship between them are discussed in \cite[Remark~2.1]{Fil.MPhAG} and in detail in \cite{Kunkel_Mehrmann,Lamour-Marz-Tisch,Riaza}.

We fix $t$ and will give several comparisons with a time-invariant regular pencil $\lambda A+B$ of index 1 below.
First, note that if the regular pencil $\lambda A+B$ ($A$ is degenerate) has index 1 in the above sense,  then the \emph{index of nilpotency} of the matrix pencil $\lambda A+B$ equals~1 as defined in \cite[p.~717--718]{Gear_Petz84}. Further, the index of nilpotency of the matrix pencil $\lambda A+B$ is called the \emph{Kronecker index} of the regular matrix pair $\{A,B\}$ which forms the matrix pencil $\lambda A+B$ \cite[Def.~1.4]{Lamour-Marz-Tisch}.  In addition, the Kronecker index or nilpotency index of the regular pencil $\lambda A+B$ (the regular matrix pair~$\{A,B\}$ or matrix pencil $\lambda A+B$) coincides with the Kronecker index (see \cite[Def.~1.4]{Lamour-Marz-Tisch}) or the index (see \cite[p.~718]{Gear_Petz84}) of a regular linear time-invariant DAE  ${\frac{d}{dt}[Ax]+Bx=f(t)}$.
The linear time-invariant DAE with the regular pencil $\lambda A+B$ of index 1 in the sense as defined in the present paper is a regular linear DAE with \emph{tractability index}~1 \cite[Def.~2.25]{Lamour-Marz-Tisch} (\emph{for a semilinear DAE this correspondence does not exist}) and its Kronecker index as well as its index of nilpotency is~1.  Thus, for the regular pencil $\lambda A+B$ of index 1 all these notions give the same index value (equal to 1) for the linear time-invariant DAE as well as the pencil, regardless of how the index is defined.  In addition, the regular pencil $\lambda A+B$ of index not higher than 1 (as defined in the present paper) satisfies the ``rank-degree'' criterion defined in \cite[p.~30]{Chistyakov-Shcheglova}.

Note that the approach that uses the \emph{differentiation index} (see \cite{Brenan-C-P}) deals with the differentiation of algebraic equations in a differential-algebraic system and with the index of a DAE while \emph{we work with the index of the pencil associated with the linear part of the DAE and do not use manipulations with equations to find it}.
Moreover, we consider the case when the nonlinear function $f$ in the DAE \eqref{DAE} (or \eqref{DAE1}) may not be differentiable with respect to $t$ and, if $f$ is not differentiable, the differentiation index of the DAE cannot be determined.
The differentiation index, roughly speaking, is defined as the number of differentiations needed to reduce a DAE to an explicit ODE (the precise definition is given in \cite[Section~2]{Brenan-C-P}).
Similarly, when finding the \emph{strangeness index} \cite{Kunkel_Mehrmann} of a linear DAE (or a pair of matrices),  differentiation as well as additional operations are applied. For a linear time-invariant DAE the differentiation index
 (if it can be determined)
and the nilpotency (or Kronecker) index coincide.

 \subsubsection{Spectral projectors of the Riesz type and their application}\label{Sect-Projector}

Suppose that $\lambda A(t)+B(t)$ is a regular pencil of index not higher than $\nu$, i.e.,  the condition \eqref{pencil_index} holds, then for each $t\in[t_+,\infty)$ there exist the two pairs of mutually complementary projectors $P_1(t)$, $P_2(t)$ and $Q_1(t)$, $Q_2(t)$ (i.e., $P_i(t) P_j(t)= \delta_{ij} P_i(t)$, $P_1(t)+P_2(t)=I_{\Rn}$,\; $Q_i(t) Q_j(t)= \delta_{ij} Q_i(t)$, $Q_1(t)+Q_2(t)=I_{\Rn}$) which can be obtained by the
formulas  (see \cite{Vlasenko1,Vlasenko2000})
 \begin{equation}\label{Proj.1}
\begin{aligned}
& P_1(t)=\frac{1}{2\pi\ii}\, \oint\limits_{|\lambda|=C_2(t)} R(\lambda,t)A(t)\, d\lambda,\qquad
 && Q_1(t)=\frac{1}{2\pi\ii}\, \oint\limits_{|\lambda|=C_2(t)} A(t)R(\lambda,t)\, d\lambda, \\
& P_2(t)=I_{\Rn}-P_1(t),\qquad
 && Q_2(t)=I_{\Rn}-Q_1(t)
\end{aligned}
 \end{equation}
and generate the decompositions of $\Rn$ into the direct sums of subspaces
\begin{equation}\label{Proj.2}
 \Rn=X_1(t)\dot{+}X_2(t),\quad  X_j(t)=P_j(t)\Rn, \quad
 \Rn=Y_1(t)\dot{+}Y_2(t),\quad  Y_j(t)=Q_j(t)\Rn, \quad j=1,2,
\end{equation}
such that the pairs of subspaces  $X_1(t)$, $Y_1(t)$ and $X_2(t)$, $Y_2(t)$ are invariant with respect to $A(t)$, $B(t)$ (i.e., $A(t),\, B(t) \colon  X_j(t)\to Y_j(t)$); the restricted operators
$$
{A_j(t)=\left. A(t)\right|_{X_j(t)}\colon  X_j(t)\to Y_j(t)},\quad
B_j(t)=\left. B(t)\right|_{X_j(t)}\colon X_j(t) \to Y_j(t), \quad j=1,2,
$$
are such that the inverse operators $A_1^{-1}(t)$ and $B_2^{-1}(t)$ exist (if $X_1(t)\not=\{0\}$ and $X_2(t)\not=\{0\}$, respectively).
If the regular pencil has index not higher than~1, i.e., satisfies \eqref{index1}, then ${A_2(t)=0}$ and the subspaces $X_j(t)$, $Y_j(t)$ are such that
$$
Y_1(t)=\range(A(t)),\quad X_2(t)=\ker A(t),\quad  Y_2(t)=B(t)X_2(t),\quad X_1(t)=R(\lambda,t)Y_1(t),\; |\lambda|\ge C_2(t).
$$
The spectral projectors \eqref{Proj.1} are real (because $A(t)$ and $B(t)$ are real) and
$$
A(t)P_1(t)=Q_1(t) A(t)=A(t),\quad  A(t)P_2(t)=Q_2(t) A(t)=0,\quad B(t)P_j(t)=Q_j(t)B(t), \quad  j=1,2.
$$

Using the spectral projectors, for each  $t\in[t_+,\infty)$ we obtain the auxiliary operator \cite{Vlasenko1,Vlasenko2000}
\begin{equation}\label{G(t)}
G(t)=A(t)+B(t)P_2(t)=A(t)+Q_2(t)B(t)\in \mathrm{L}(\Rn)
\end{equation}
such that ${G(t) X_j(t)= Y_j(t)}$, ${j=1,2}$; it has the inverse ${G^{-1}(t)\in \mathrm{L}(\Rn)}$ such that
$$
G^{-1}(t)A(t)P_1(t)=G^{-1}(t)A(t)=P_1(t),\qquad  G^{-1}(t)B(t)P_2(t)=P_2(t).
$$

The projectors $P_i(t)$, $Q_i(t)$, $i=1,2$, and the operators $G(t)$, $G^{-1}(t)$  as operator functions have the same degree of smoothness as the operator functions $A(t)$, $B(t)$ and the function $C_2(t)$ from \eqref{index1} \cite[Section~3.3]{Vlasenko1}, \cite{Vlasenko2000}.

In what follows, we suppose that $A,\, B\in C^1([t_+,\infty),\mathrm{L}(\Rn))$ and $C_2\in C^1([t_+,\infty),(0,\infty))$, then $P_i,\, Q_i\in C^1([t_+,\infty),\mathrm{L}(\Rn))$,  $i=1,2$, and $G,\, G^{-1}\in C^1([t_+,\infty),\mathrm{L}(\Rn))$.  Since the projectors $P_i(t)$, $Q_i(t)$ are continuous (moreover, they are continuously differentiable) as operator functions for $t\in[t_+,\infty)$, the dimensions of the subspaces $X_i(t)=P_i(t)\Rn$ and $Y_i(t)=Q_i(t)\Rn$ ($i=1,2$) are constant (cf. \cite[p.~34]{Kato-eng}), and we denote ${\dim X_2(t)=\dim Y_2(t)=d}$ ($d=\mathrm{const}\ge 0$) and, accordingly,  $\dim X_1(t)=\dim Y_1(t)=n-d$, $t\in [t_+,\infty)$.
This means that the pencil $\lambda A(t)+B(t)$ has either index 0 for each $t$ or index 1 for each $t$\, ($t\in[t_+,\infty)$).

\paragraph{\textbf{Method of the spectral projectors for the reduction of a time-varying semilinear DAE to the system of an explicit ODE and an algebraic equation.}}

As shown in \cite{Fil.DE-1}, using the projectors $P_i(t)$, $Q_i(t)$ and the operator $G^{-1}(t)$, one can reduce the DAE \eqref{DAE} to the equivalent system of the explicit ODE \eqref{DAEsys1.1} (with respect to $P_1(t)x(t)$) and the algebraic equation \eqref{DAEsys1.2}:
\begin{align}
 & [P_1(t)x(t)]'=\big[P'_1(t)-G^{-1}(t)\, Q_1(t)\, [A'(t)+B(t)]\,\big] P_1(t)x(t)+ G^{-1}(t)\, Q_1(t) f(t,x(t)), \label{DAEsys1.1} \\
 & G^{-1}(t)\, Q_2(t) \big[f(t,x(t))-A'(t)P_1(t)x(t)\big]-P_2(t) x(t)=0. \label{DAEsys1.2}
\end{align}

For each $t$, with respect to the decomposition $\Rn=X_1(t)\dot{+}X_2(t)$ (see \eqref{Proj.2}), every $x\in\Rn$ can be uniquely represented as
\begin{equation}\label{xdecomp}
x=P_1(t)x+P_2(t)x =x_{p_1}(t)+x_{p_2}(t),\quad x_{p_i}(t)=P_i(t)x\in X_i(t),\quad i=1,2.
\end{equation}
Using \eqref{xdecomp} (i.e., $x_{p_i}(t)=P_i(t)x(t)$, $x(t)=x_{p_1}(t)+x_{p_2}(t)$), we rewrite the system \eqref{DAEsys1.1}, \eqref{DAEsys1.2} as
\begin{align}
 & x_{p_1}'(t)= \big[P'_1(t)-G^{-1}(t)\, Q_1(t)\, [A'(t)+ B(t)]\, \big]x_{p_1}(t) + G^{-1}(t)\, Q_1(t)f\big(t,x_{p_1}(t)+x_{p_2}(t)\big), \label{DAEsys2.1} \\
 & G^{-1}(t)\, Q_2(t)\big[f\big(t,x_{p_1}(t)+x_{p_2}(t)\big)-A'(t)x_{p_1}(t)\big] -x_{p_2}(t)=0. \label{DAEsys2.2}
\end{align}
The system \eqref{DAEsys2.1}, \eqref{DAEsys2.2} or \eqref{DAEsys1.1}, \eqref{DAEsys1.2} is a \emph{nonautonomous} (\emph{time-varying}) \emph{semi-explicit DAE}. Generally, systems of the form $\dot{y}=f(t,y,z)$, $0=g(t,y,z)$ are referred to as nonautonomous (time-varying) semi-explicit DAEs.
The time-varying semilinear DAE \eqref{DAE1} is reduced to a semi-explicit form  in a similar way (see \cite[the system (1.18) or (1.19),~(1.20)]{Fil.DE-1}).

 \subsection{Existence, uniqueness and boundedness of global solutions of time-varying semilinear DAEs}\label{GlobSolv}

Below we give definitions \cite{Fil.DE-1,Fil.MPhAG} that will be needed to formulate further results.

A solution $x(t)$ of the IVP \eqref{DAE}, \eqref{ini} is called \emph{global} if it exists on the interval $[t_0,\infty)$.\;
A solution $x(t)$ of the IVP \eqref{DAE}, \eqref{ini} is called \emph{Lagrange stable} if it is global and bounded, i.e., $x(t)$ exists on $[t_0,\infty)$ and $\sup\limits_{t\in [t_0,\infty)} \|x(t)\|<\infty$.

A solution $x(t)$ of the IVP \eqref{DAE}, \eqref{ini} is called \emph{Lagrange unstable} (a solution \emph{has a finite escape time} or \emph{is blow-up in finite time}) if it exists on some finite interval $[t_0,T)$ and is unbounded, i.e., there exists $T>t_0$ ($T<\infty$) such that $\mathop{\lim}\limits_{t\to T-0} \|x(t)\|=\infty$.

Equation \eqref{DAE} is called \emph{Lagrange stable} (\emph{Lagrange unstable}) \emph{for the initial point $(t_0,x_0)$} if the solution of the IVP \eqref{DAE}, \eqref{ini} is Lagrange stable (Lagrange unstable) for this initial point.

\emph{Equation \eqref{DAE}} is called \emph{Lagrange stable} (\emph{Lagrange unstable}) if each solution of the IVP \eqref{DAE}, \eqref{ini} is Lagrange stable (Lagrange unstable), that is, the equation is Lagrange stable (Lagrange unstable) for each consistent initial point.

Similar definitions hold for the DAE \eqref{DAE1} (the IVP \eqref{DAE1}, \eqref{ini}).

 \smallskip
\paragraph{\textbf{The existence, uniqueness and boundedness of global solutions of DAEs \eqref{DAE} and \eqref{DAE1}.}}
In what follows, the following notation will be used:
 \begin{equation*}
U_R^c(0)=\{z\in\Rn \mid \|z\|\ge R\}.
 \end{equation*}

  \begin{theorem}[Global solvability of the DAE \eqref{DAE} {\cite[Theorem~2.1]{Fil.DE-1}}]\label{Th_GlobSolv}\;
Let $f\in C([t_+,\infty)\times \Rn, \Rn)$, $\dfrac{\partial f}{\partial x} \in C([t_+,\infty)\times \Rn, \mathrm{L}(\Rn))$, $A, B\in C^1([t_+,\infty),\mathrm{L}(\Rn))$,\; the pencil $\lambda A(t)+B(t)$ satisfy \eqref{index1}, where $C_2\in C^1([t_+,\infty),(0,\infty))$, and the following conditions hold:
\begin{enumerate}[1.]
\item\label{SoglRN1} For each $t\in [t_+,\infty)$ and each $x_{p_1}(t)\in X_1(t)$ there exists a unique $x_{p_2}(t)\in X_2(t)$ such that
  \begin{equation}\label{soglreg2}
 (t,x_{p_1}(t)+x_{p_2}(t))\in L_{t_+}.
  \end{equation}

\item\label{InvRN1}  For each (fixed) ${t_*\in[t_+,\infty)}$, ${x_{p_1}^*(t_*)\in X_1(t_*)}$, ${x_{p_2}^*(t_*)\in X_2(t_*)}$ such that ${(t_*,x_{p_1}^*(t_*)+x_{p_2}^*(t_*)) \in L_{t_+}}$, the operator $\Phi_{t_*,x^*_{p_1}(t_*),x^*_{p_2}(t_*)}$ defined by
 \begin{equation}\label{funcPhi}
\Phi_{t_*,x^*_{p_1}(t_*),x^*_{p_2}(t_*)}=\!\left[\frac{\partial}{\partial x} \big[Q_2(t_*)\, f\big(t_*,x^*_{p_1}(t_*) +x^*_{p_2}(t_*)\big)\,\big]\!-B(t_*)\right]\! P_2(t_*)\colon X_2(t_*)\to Y_2(t_*)
 \end{equation}
   is invertible.

\item\label{ExtensRN1}  There exist functions ${k\in C([t_+,\infty),{\mathbb R})}$, ${U\in C(0,\infty)}$, a number ${R>0}$ and a positive definite function $V\in C^1\big([t_+,\infty)\times U_R^c(0),{\mathbb R}\big)$ such that ${\int\limits_{{\textstyle v}_0}^{\infty}\big(U(v)\big)^{-1}\, dv =\infty}$ \textup{(}$v_0>0$ is some constant\textup{)} and it holds that

  \smallskip
  3.1.~~${V(t,z)\to\infty}$ uniformly in $t$ on each finite interval $[a,b)\subset [t_+,\infty)$ as ${\|z\|\to\infty}$;

  \smallskip
  3.2.~~For all $t\in[t_+,\infty)$, $x_{p_1}(t)\in X_1(t)$, $x_{p_2}(t)\in X_2(t)$ such that  $(t,x_{p_1}(t)+x_{p_2}(t))\in L_{t_+}$ and ${\|x_{p_1}(t)\|\ge R}$, the following inequality holds:
     \begin{equation}\label{LagrDAE}
    V'_{\eqref{DAEsys2.1}}(t,x_{p_1}(t))\le k(t)\, U\big(V(t,x_{p_1}(t))\big),
     \end{equation}
    where
    \begin{multline*}
    V'_{\eqref{DAEsys2.1}}(t,x_{p_1}(t))= \frac{\partial V}{\partial t}(t,x_{p_1}(t))+ \bigg(\frac{\partial V}{\partial z}(t,x_{p_1}(t)), \big[P'_1(t)   \\
    - G^{-1}(t)Q_1(t)[A'(t)+B(t)]\big]x_{p_1}(t)+G^{-1}(t)Q_1(t) f\big(t,x_{p_1}(t)+x_{p_2}(t)\big)\!\bigg)
    \end{multline*}
    is the derivative of $V$ along the trajectories of equation \eqref{DAEsys2.1} \textup{(}where ${x_{p_1}(t)\!=\!z(t)}$\textup{)}.
\end{enumerate}
Then for each initial point  $(t_0,x_0)\in L_{t_+}$ there exists a unique global solution of the IVP \eqref{DAE}, \eqref{ini}.
 \end{theorem}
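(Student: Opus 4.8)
The plan is to reduce the global solvability statement for the DAE \eqref{DAE} to a corresponding statement for the equivalent semi-explicit system, namely the decoupled pair consisting of the explicit ODE \eqref{DAEsys2.1} for the component $x_{p_1}(t)=P_1(t)x(t)$ and the algebraic constraint \eqref{DAEsys2.2} for $x_{p_2}(t)=P_2(t)x(t)$. By the equivalence of \eqref{DAE} and this system (which is exactly the content of the decoupling via the spectral projectors, and is recorded later in Lemma~\ref{LemSolEquiv}), it suffices to prove that the IVP for the system has a unique global solution for each consistent initial point $(t_0,x_0)\in L_{t_+}$. First I would use condition~\ref{SoglRN1} (the unique solvability of the algebraic part) together with condition~\ref{InvRN1} and the implicit function theorem to show that, locally around any point of the manifold $L_{t_+}$, the constraint \eqref{DAEsys2.2} determines $x_{p_2}(t)$ as a $C^1$ function of $(t,x_{p_1}(t))$; the invertibility of the operator $\Phi_{t_*,x^*_{p_1},x^*_{p_2}}$ in \eqref{funcPhi} is precisely the nondegeneracy hypothesis needed for the implicit function theorem applied to $F(t,z,u)=0$ (with $F$ as in \eqref{F}), because $\partial F/\partial u$ restricted to $X_2(t)$ is, up to composition with the invertible $G^{-1}(t)Q_2(t)$, the operator $\Phi$. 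Substituting this resolved $x_{p_2}=\omega(t,x_{p_1})$ into \eqref{DAEsys2.1} yields a genuine explicit ODE $\dot z=\Pi(t,z,\omega(t,z))$ on the (time-varying) subspace $X_1(t)$, with a right-hand side continuous in $t$ and locally Lipschitz (indeed $C^1$) in $z$.

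Next I would invoke the standard local existence–uniqueness theorem for ODEs to get a unique maximal solution $z(t)$ of this explicit ODE through $z(t_0)=P_1(t_0)x_0$, defined on a maximal interval $[t_0,\tau)$, and then set $x(t)=z(t)+\omega(t,z(t))$ and check via Lemma~\ref{LemSolEquiv} that this is the unique maximal solution of the IVP \eqref{DAE}, \eqref{ini}. The remaining and genuinely substantive step is to show $\tau=\infty$, i.e., that the solution does not blow up in finite time. This is where condition~\ref{ExtensRN1} enters: along the trajectory, consider the function $t\mapsto V(t,z(t))$ once $\|z(t)\|\ge R$. By 3.2, its derivative satisfies the differential inequality $\frac{d}{dt}V(t,z(t))=V'_{\eqref{DAEsys2.1}}(t,z(t))\le k(t)\,U(V(t,z(t)))$. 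A Bihari/Osgood-type comparison argument, using the divergence of $\int_{v_0}^\infty (U(v))^{-1}\,dv$, shows that $V(t,z(t))$ cannot escape to $+\infty$ on any finite interval, hence (by 3.1, the radial-unboundedness/properness of $V$ uniform on finite $t$-intervals) $\|z(t)\|$ stays bounded on every finite subinterval of $[t_0,\tau)$. If $\tau<\infty$ this bound, combined with continuity of $\omega$, bounds $\|x(t)\|$ on $[t_0,\tau)$, contradicting maximality via the standard continuation argument; therefore $\tau=\infty$ and the solution is global.

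The main obstacle I anticipate is handling the time-dependence of the subspaces $X_1(t)$, $X_2(t)$ carefully enough that the reduction to an explicit ODE is rigorous: one must verify that $P_1(t)z(t)=z(t)$ is preserved by the reduced flow (so that $z(t)$ genuinely stays in $X_1(t)$), which uses the identities $A(t)P_2(t)=Q_2(t)A(t)=0$, $B(t)P_j(t)=Q_j(t)B(t)$, $G^{-1}(t)A(t)=P_1(t)$, $G^{-1}(t)B(t)P_2(t)=P_2(t)$, and the $C^1$-smoothness of the projectors; and similarly that the function $\omega(t,z)$ produced by the implicit function theorem indeed takes values in $X_2(t)$ and patches to a globally defined $C^1$ map (global in the $z$-variable, which follows from the \emph{global} unique solvability in~\ref{SoglRN1}, not merely local). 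A secondary technical point is the comparison argument for the differential inequality \eqref{LagrDAE}: one should phrase it so as to allow the trajectory to cross in and out of the region $\{\|z\|\ge R\}$, which is handled by the usual trick of bounding $V$ on the "bad" region by its value at the last exit time plus the comparison bound, and noting that on the complementary bounded region $\{\|z\|\le R\}$ the solution is trivially bounded. Once these two points are dispatched, the rest is routine ODE theory; I would not belabor the comparison-lemma computation, citing it as the classical Bihari inequality.
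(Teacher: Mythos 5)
The paper does not reprove Theorem~\ref{Th_GlobSolv} --- it is imported verbatim from \cite{Fil.DE-1} --- but your proposal follows exactly the route that the paper's machinery is built around: the projector-based decoupling into the semi-explicit system \eqref{DAEsys2.1}, \eqref{DAEsys2.2} (the content of Lemma~\ref{LemSolEquiv}), resolution of the algebraic constraint by the implicit function theorem with the invertibility of $\Phi_{t_*,x^*_{p_1}(t_*),x^*_{p_2}(t_*)}$ serving as the nondegeneracy hypothesis (cf.\ the identity \eqref{InvNum}, which makes your ``up to composition with $G^{-1}(t)Q_2(t)$'' claim precise), and a Bihari-type nonlocal continuation criterion for the reduced ODE using condition~\ref{ExtensRN1}. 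The argument is correct, and the technical points you single out --- invariance of $X_1(t)$ under the reduced flow, patching the local implicit-function branches into a global map via the global uniqueness in condition~\ref{SoglRN1}, and trajectories crossing the set $\{\|z\|=R\}$ in the comparison step --- are precisely the ones the cited proof has to dispatch.
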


Equation \eqref{DAEsys2.1} can be written as ${x_{p_1}'(t)=\Pi(t,x_{p_1}(t),x_{p_2}(t))}$, where $\Pi$ has the form \eqref{Pi} (see Section \ref{Num-meth}), i.e.,
$$
\Pi(t,x_{p_1}(t),x_{p_2}(t))=\big[P'_1(t)-G^{-1}(t)\, Q_1(t)\, [A'(t)+ B(t)]\, \big]x_{p_1}(t) + G^{-1}(t)\, Q_1(t)f\big(t,x_{p_1}(t)+x_{p_2}(t)\big).
$$

Consider the equation
 \begin{equation}\label{DAEsys2.1-cut}
x_{p_1}'(t)= \Pi_{\scriptscriptstyle T}(t,x_{p_1}(t),x_{p_2}(t)),\quad \Pi_{\scriptscriptstyle T}(t,x_{p_1}(t),x_{p_2}(t))=
\begin{cases}
\Pi(t,x_{p_1}(t),x_{p_2}(t)), & t\in [t_+,T], \\
\Pi(T,x_{p_1}(T),x_{p_2}(T)), & t>T, \end{cases}
 \end{equation}
where $T>t_+$ is a parameter and the function $\Pi_{\scriptscriptstyle T}$ is the truncation of $\Pi$ over $t$.
  \begin{proposition}\label{St_GlobSolvSrez}
Theorem \ref{Th_GlobSolv} remains valid if condition \ref{ExtensRN1} is replaced by the following:
\begin{enumerate}[1.]
\addtocounter{enumi}{2}
\item\label{ExtensRN1Srez}  There exists a positive definite function ${V\in C^1([t_+,\infty)\times U_R^c(0),\R)}$, where ${R>0}$ is some number, and a function ${U\in C(0,\infty)}$ satisfying the relation ${\int\limits_{{\textstyle v}_0}^{\infty}\big(U(v)\big)^{-1}\, dv =\infty}$ \textup{(}${v_0>0}$ is some constant\textup{)}, and for each ${T>0}$ there exists a number ${R_{\scriptscriptstyle T}\ge R}$ and a function ${k_{\scriptscriptstyle T}\in C([t_+,\infty),\R)}$ such that

   \smallskip
  3.1.~~${V(t,z)\to\infty}$ uniformly in $t$ on each finite interval $[a,b)\subset [t_+,\infty)$ as ${\|z\|\to\infty}$;

   \smallskip
  3.2.~~For all ${t\in[t_+,\infty)}$, ${x_{p_1}(t)\in X_1(t)}$, ${x_{p_2}(t)\in X_2(t)}$ such that $(t,x_{p_1}(t)+x_{p_2}(t))\in L_{t_+}$ and $\|x_{p_1}(t)\|\ge R_{\scriptscriptstyle T}$, the following inequality holds:
  $$
  V'_{\eqref{DAEsys2.1-cut}}(t,x_{p_1}(t))\le k_{\scriptscriptstyle T}(t)\, U\big(V(t,x_{p_1}(t))\big),
  $$
  where
  \begin{equation*}
  V'_{\eqref{DAEsys2.1-cut}}(t,x_{p_1}(t))=\frac{\partial V}{\partial t}(t,x_{p_1}(t)) + \bigg(\frac{\partial V}{\partial z}(t,x_{p_1}(t)),\, \Pi_{\scriptscriptstyle T}(t,x_{p_1}(t),x_{p_2}(t))\bigg)
  \end{equation*}
  is the derivative of $V$ along the trajectories of equation \eqref{DAEsys2.1-cut}.
\end{enumerate}
 \end{proposition}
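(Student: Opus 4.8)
The plan is to deduce the statement from the already-proven Theorem~\ref{Th_GlobSolv} by a truncation-in-time argument, the point being that condition~\ref{ExtensRN1Srez} is tailored to become condition~\ref{ExtensRN1} for the truncated equation \eqref{DAEsys2.1-cut}. Since conditions~\ref{SoglRN1} and~\ref{InvRN1} are untouched, the reduction of the DAE \eqref{DAE} to the explicit ODE \eqref{DAEsys2.1} on the manifold $L_{t_+}$ is unchanged: by the implicit function theorem applied via the invertibility of the operator \eqref{funcPhi}, along any admissible trajectory $x_{p_2}(t)$ is a $C^1$ function of $(t,x_{p_1}(t))$, so \eqref{DAEsys2.1} becomes a genuine ODE with locally Lipschitz right-hand side and the IVP \eqref{DAE}, \eqref{ini} has a unique solution on a maximal interval $[t_0,t^*)$, $t^*\in(t_0,\infty]$. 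It therefore suffices to prove that $t^*=\infty$, i.e., that for every $T>t_0$ the solution exists on $[t_0,T]$.

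So I would fix $T>t_0$ and work with the truncated equation \eqref{DAEsys2.1-cut}. First, \eqref{DAEsys2.1-cut} coincides with \eqref{DAEsys2.1} on $[t_+,T]$, hence conditions~\ref{SoglRN1},~\ref{InvRN1} hold for it as well, while for $t>T$ its right-hand side is the constant $\Pi(T,x_{p_1}(T),x_{p_2}(T))$, so the associated ODE is trivially solvable for all $t$. Second, I would check that condition~\ref{ExtensRN1Srez}, read with the $T$-dependent data $R_{\scriptscriptstyle T}\ge R$ and $k_{\scriptscriptstyle T}$ (and the fixed $U$, $V$), is precisely condition~\ref{ExtensRN1} for \eqref{DAEsys2.1-cut}: the uniform-growth requirement on $V$ is the same, the divergence of $\int_{v_0}^{\infty}(U(v))^{-1}\,dv$ is assumed, and the inequality $V'_{\eqref{DAEsys2.1-cut}}(t,x_{p_1}(t))\le k_{\scriptscriptstyle T}(t)\,U\big(V(t,x_{p_1}(t))\big)$ now holds on the whole half-line $[t_+,\infty)$ — which is exactly what freezing the vector field beyond $T$ buys: the comparison estimate survives for $t>T$ with one and the same comparison function, even though $k_{\scriptscriptstyle T}$ depends on $T$. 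Consequently, repeating the proof of Theorem~\ref{Th_GlobSolv} verbatim with \eqref{DAEsys2.1-cut} in place of \eqref{DAEsys2.1} yields a unique global solution $x_{\scriptscriptstyle T}(t)$, $t\in[t_0,\infty)$, of the corresponding (truncated) IVP with the same consistent initial point $(t_0,x_0)$.

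Finally, since on $[t_0,T]$ the truncated equation is the original one, the restriction $x_{\scriptscriptstyle T}|_{[t_0,T]}$ solves the IVP \eqref{DAE}, \eqref{ini} on $[t_0,T]$; by the local uniqueness coming from~\ref{SoglRN1},~\ref{InvRN1} it agrees with the maximal solution there, which forces $t^*>T$. As $T>t_0$ was arbitrary, $t^*=\infty$; letting $T\to\infty$ and using the same uniqueness to see that the $x_{\scriptscriptstyle T}$ are pairwise consistent on overlaps produces the desired global solution, whose uniqueness is inherited from Theorem~\ref{Th_GlobSolv}.

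The step I expect to be the main obstacle is the second one: verifying that the proof of Theorem~\ref{Th_GlobSolv} genuinely goes through for \eqref{DAEsys2.1-cut}, i.e., that its a priori boundedness mechanism (the Chaplygin-type comparison with the scalar equation $v'=k(t)\,U(v)$ together with the uniform-growth property of $V$) still runs when the vector field is frozen in $t$ beyond $T$, and that the dependence of $R_{\scriptscriptstyle T}$ and $k_{\scriptscriptstyle T}$ on $T$ causes no trouble because $T$ is held fixed throughout. Once this is settled, the remaining patching over $T$ and the uniqueness are routine.
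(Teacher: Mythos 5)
Your proposal is correct and follows essentially the same route as the paper: for each fixed $T$ one verifies that condition~3$'$ supplies condition~3 of Theorem~\ref{Th_GlobSolv} for the truncated equation \eqref{DAEsys2.1-cut} (with $R_{\scriptscriptstyle T}$, $k_{\scriptscriptstyle T}$ in place of $R$, $k$), applies that theorem to obtain a global solution of the truncated problem, and then uses the coincidence of the truncated and original right-hand sides on $[t_+,T]$ together with uniqueness to conclude that the maximal solution of the original IVP exists past every $T$. The paper compresses this into one sentence, so your write-up is simply a more explicit version of the same argument.
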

 \begin{proof}
The proof of the above proposition is easily derived from the proof of Theorem \ref{Th_GlobSolv} (see \cite{Fil.DE-1}), since a solution of equation \eqref{DAEsys2.1-cut} with the truncation coincides with the solution of the original equation with the same initial values on the interval $[t_+,T]$ (where the right-hand sides of the equations coincide).
 \end{proof}

A system of $s$ pairwise disjoint projectors $\{\Theta_k\in \mathrm{L}(Z)\}_{k=1}^s$ (i.e., $\Theta_i\, \Theta_j =\delta_{ij}\, \Theta_i$) such that $\sum\limits_{k=1}^s\Theta _k=I_Z$, where $Z$ is an $s$-dimensional linear space and $I_Z$ is the identity operator in $Z$, is called an \emph{additive resolution of the identity} in $Z$ (cf. \cite{RF1}).
An operator function $\Phi\colon D\to \mathrm{L}(W,Z)$, where $W$, $Z$ are $s$-dimensional linear spaces and $D\subset W$,  is called \emph{basis invertible} on an interval $J\subset D$ if for some additive resolution of the identity $\{\Theta _k\}_{k=1}^s$ in $Z$ and for any set of elements $\{w^k\}_{k=1}^s\subset J$ the operator
 $$
\Lambda=\sum\limits_{k=1}^s\Theta_k\Phi(w^k)\in \mathrm{L}(W,Z)
 $$
has an inverse $\Lambda^{-1}\in \mathrm{L}(Z,W)$  (cf. \cite{RF1}).  The basis invertibility of $\Phi$ on $J$ implies its invertibility on $J$, i.e., the invertibility of the operator $\Phi (w^*)$ for each $w^*\in J$ . The converse is not true, except for the case when $W$, $Z$ are one-dimensional.
 \begin{theorem}[Global solvability of the DAE \eqref{DAE}, {\cite[Theorem~2.2]{Fil.DE-1}}]\label{Th_GlobSolvBInv}\;
Theorem~\ref{Th_GlobSolv} remains valid if conditions \ref{SoglRN1}, \ref{InvRN1} are replaced by the following:
\begin{enumerate}[1.]
\item\label{SoglRN2}  For each ${t\in [t_+,\infty)}$ and each ${x_{p_1}(t)\in X_1(t)}$ there exists ${x_{p_2}(t)\in X_2(t)}$ such that \eqref{soglreg2}.

\item\label{BasInvRN1} For each (fixed) ${t_*\in[t_+,\infty)}$, ${x_{p_1}^*(t_*)\in X_1(t_*)}$,
    ${x_{p_2}^i(t_*)\in X_2(t_*)}$ such that ${(t_*, x_{p_1}^*(t_*) + x_{p_2}^i(t_*))\in L_{t_+}}$, ${i=1,2}$,  the operator function  $\Phi_{t_*,x^*_{p_1}(t_*)}(x_{p_2}(t_*))$ defined by
 \begin{equation}\label{funcPhiBInv}
\begin{split}
\Phi_{t_*,x^*_{p_1}(t_*)}\colon & X_2(t_*)\to \mathrm{L}(X_2(t_*),Y_2(t_*)),  \\
& \Phi_{t_*,x^*_{p_1}(t_*)}(x_{p_2}(t_*))=\! \bigg[\frac{\partial}{\partial x}\! \big[Q_2(t_*)f\big(t_*,x^*_{p_1}(t_*)+ x_{p_2}(t_*)\big)\big] -B(t_*)\bigg]P_2(t_*),\!\!
\end{split}
\end{equation}
 is basis invertible on $[x_{p_2}^1(t_*),x_{p_2}^2(t_*)]$.
 \end{enumerate}
 \end{theorem}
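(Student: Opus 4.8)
The plan is to deduce Theorem~\ref{Th_GlobSolvBInv} from Theorem~\ref{Th_GlobSolv} by showing that the pair of hypotheses \ref{SoglRN2}, \ref{BasInvRN1} implies the pair \ref{SoglRN1}, \ref{InvRN1}. All the remaining hypotheses of Theorem~\ref{Th_GlobSolv} --- the smoothness of $f$, $A$, $B$, the index-one condition \eqref{index1} on the pencil (with $C_2\in C^1$), and condition \ref{ExtensRN1} --- appear unchanged in Theorem~\ref{Th_GlobSolvBInv}, so once \ref{SoglRN1} and \ref{InvRN1} are verified, Theorem~\ref{Th_GlobSolv} applies directly and yields, for every consistent initial point $(t_0,x_0)\in L_{t_+}$, a unique global solution of the IVP \eqref{DAE}, \eqref{ini}.

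First I would derive \ref{InvRN1} from \ref{BasInvRN1}. Fix $t_*$, $x^*_{p_1}(t_*)\in X_1(t_*)$, and $x^*_{p_2}(t_*)\in X_2(t_*)$ with $(t_*,x^*_{p_1}(t_*)+x^*_{p_2}(t_*))\in L_{t_+}$, and apply \ref{BasInvRN1} with $x^1_{p_2}(t_*)=x^2_{p_2}(t_*)=x^*_{p_2}(t_*)$; then the segment $[x^1_{p_2}(t_*),x^2_{p_2}(t_*)]$ reduces to the single point $x^*_{p_2}(t_*)$, and basis invertibility on a one-point set means (for the corresponding additive resolution of the identity $\{\Theta_k\}$) that $\sum_k\Theta_k\,\Phi_{t_*,x^*_{p_1}(t_*)}(x^*_{p_2}(t_*))=\Phi_{t_*,x^*_{p_1}(t_*)}(x^*_{p_2}(t_*))$ is invertible. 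Comparing \eqref{funcPhi} with \eqref{funcPhiBInv}, this operator coincides with $\Phi_{t_*,x^*_{p_1}(t_*),x^*_{p_2}(t_*)}$, so \ref{InvRN1} holds.

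Next I would establish \ref{SoglRN1}: the existence of $x_{p_2}(t)\in X_2(t)$ with $(t,x_{p_1}(t)+x_{p_2}(t))\in L_{t_+}$ is exactly \ref{SoglRN2}, so only uniqueness remains. Fix $t$ and $x_{p_1}(t)\in X_1(t)$ and introduce the $C^1$ mapping $F\colon X_2(t)\to Y_2(t)$, $F(s)=Q_2(t)\big[A'(t)P_1(t)(x_{p_1}(t)+s)+B(t)(x_{p_1}(t)+s)-f(t,x_{p_1}(t)+s)\big]$; by \eqref{L_t}, for $s\in X_2(t)$ one has $(t,x_{p_1}(t)+s)\in L_{t_+}$ iff $F(s)=0$. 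Since $P_1(t)s=0$ and $Q_2(t)B(t)s=B(t)s$ for $s\in X_2(t)$ (by the projector identities of Section~\ref{Intro}), a direct computation gives $F'(s)=-\Phi_{t,x_{p_1}(t)}(s)$ as operators $X_2(t)\to Y_2(t)$, with $\Phi_{t,x_{p_1}(t)}(\cdot)$ the operator function \eqref{funcPhiBInv}. Now let $x^1_{p_2}(t),x^2_{p_2}(t)\in X_2(t)$ both satisfy the consistency condition, i.e. $F(x^1_{p_2}(t))=F(x^2_{p_2}(t))=0$, let $\{\Theta_k\}_{k=1}^d$ be the additive resolution of the identity in the $d$-dimensional space $Y_2(t)$ witnessing, via \ref{BasInvRN1}, the basis invertibility of $\Phi_{t,x_{p_1}(t)}(\cdot)$ on $[x^1_{p_2}(t),x^2_{p_2}(t)]$, and note that each $\Theta_k$ has rank one ($d$ pairwise disjoint nonzero projectors summing to the identity of a $d$-dimensional space). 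For each $k$, the scalar $C^1$ function $\tau\mapsto\Theta_k F\big(x^1_{p_2}(t)+\tau(x^2_{p_2}(t)-x^1_{p_2}(t))\big)$ vanishes at $\tau=0$ and $\tau=1$, so by Rolle's theorem it has a vanishing derivative at some $\tau_k\in(0,1)$; writing $w^k:=x^1_{p_2}(t)+\tau_k(x^2_{p_2}(t)-x^1_{p_2}(t))\in[x^1_{p_2}(t),x^2_{p_2}(t)]$, this says $\Theta_k\,\Phi_{t,x_{p_1}(t)}(w^k)\big(x^2_{p_2}(t)-x^1_{p_2}(t)\big)=0$. Summing over $k$ gives $\Lambda\big(x^2_{p_2}(t)-x^1_{p_2}(t)\big)=0$ for $\Lambda=\sum_{k=1}^d\Theta_k\,\Phi_{t,x_{p_1}(t)}(w^k)$, which is invertible by basis invertibility; hence $x^1_{p_2}(t)=x^2_{p_2}(t)$. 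This proves \ref{SoglRN1}, and the theorem follows from Theorem~\ref{Th_GlobSolv}.

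The main obstacle is this uniqueness step --- converting ``basis invertibility on a segment'' into injectivity of the implicitly defined map $x_{p_1}(t)\mapsto x_{p_2}(t)$. It hinges on the resolution of the identity consisting of rank-one projectors, so that Rolle's theorem can be applied componentwise (the analogous assertion with higher-rank projectors fails), and on the identity $F'=-\Phi_{t,x_{p_1}(t)}$, which uses $A(t)P_2(t)=Q_2(t)A(t)=0$ and $B(t)P_j(t)=Q_j(t)B(t)$. The remaining regularity needed for the reduction --- that the now single-valued assignment $x_{p_1}(t)\mapsto x_{p_2}(t)$ be as smooth as the proof of Theorem~\ref{Th_GlobSolv} requires --- is routine by the implicit function theorem at the invertible operator $\Phi$, and is in any case already carried out within that proof once \ref{SoglRN1} and \ref{InvRN1} are available.
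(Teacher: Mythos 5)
Your argument is correct, but note that the paper itself does not prove Theorem~\ref{Th_GlobSolvBInv} at all: it is imported by citation from \cite[Theorem~2.2]{Fil.DE-1}, and the Appendix explicitly states that these results are proved in prior papers. So there is no in-paper proof to compare against; in the source the result is obtained by re-running the argument of Theorem~2.1 with basis invertibility substituted for invertibility in the relevant lemmas, whereas you instead give a clean reduction of hypotheses, showing that conditions~\ref{SoglRN2} and~\ref{BasInvRN1} together imply conditions~\ref{SoglRN1} and~\ref{InvRN1}, after which Theorem~\ref{Th_GlobSolv} applies verbatim. Both halves of your reduction check out: taking $x_{p_2}^1(t_*)=x_{p_2}^2(t_*)$ collapses the segment to a point and recovers invertibility of $\Phi_{t_*,x^*_{p_1}(t_*),x^*_{p_2}(t_*)}$ (this implication is even stated in the Appendix), and your componentwise Rolle argument --- using that $F'(s)=-\Phi_{t,x_{p_1}(t)}(s)$ via $P_1(t)s=0$, $Q_2(t)B(t)=B(t)P_2(t)$, and that the $d$ pairwise disjoint projectors of an additive resolution of the identity in the $d$-dimensional space $Y_2(t)$ are each rank one --- correctly converts basis invertibility on $[x_{p_2}^1(t_*),x_{p_2}^2(t_*)]$ into uniqueness of the consistent $x_{p_2}(t)$; this mean-value device is precisely the purpose for which basis invertibility is defined in \cite{RF1}. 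The only caveat is definitional: the Appendix's definition of an additive resolution of the identity does not literally exclude zero projectors, and your Rolle step needs all $\Theta_k$ to be rank one; you flag this yourself, and it matches the intended reading, so it is a quibble rather than a gap. Your route has the advantage of exhibiting Theorem~\ref{Th_GlobSolvBInv} as a genuine corollary of Theorem~\ref{Th_GlobSolv} rather than a parallel theorem.
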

 \begin{remark}\label{RemSmoothSol}
Theorems \ref{Th_GlobSolv} and \ref{Th_GlobSolvBInv} ensure the following smoothness for the components of a solution $x(t)$ of the DAE \eqref{DAE}:  $P_1(t)x(t)\in C^1([t_0,\infty),\Rn)$,  $P_2(t)x(t)\in C([t_0,\infty),\Rn)$.
If in these theorems $A,\, B\in C^{m+1}([t_+,\infty),\mathrm{L}(\Rn))$, the function $C_2\in  C^{m+1}([t_+,\infty),(0,\infty))$ in the condition \eqref{index1}, and $f\in C^m([t_+,\infty)\times\Rn,\Rn)$, where ${m\ge 1}$, then the solution $x(t)$ is such that $P_1(t)x(t)\in C^{m+1}([t_0,\infty),\Rn)$ and $P_2(t)x(t)\in C^m([t_0,\infty),\Rn)$.
\end{remark}

\begin{proposition}[{\cite[Assertion~2.1]{Fil.DE-1}}]\label{Th_GlobSolvContr}\;
Theorem~\ref{Th_GlobSolv} remains valid if conditions \ref{SoglRN1}, \ref{InvRN1} are replaced by the following condition: there exists a constant ${0\le\alpha<1}$ such that
\begin{equation}\label{GlobContr}
  \big\|G^{-1}(t)\, Q_2(t)\, f\big(t,x_{p_1}(t)+x_{p_2}^1(t)\big) - G^{-1}(t)\, Q_2(t)\, f\big(t,x_{p_1}(t)+x_{p_2}^2(t)\big)\big\|\le \alpha \big\|x_{p_2}^1(t)-x_{p_2}^2(t)\big\|
\end{equation}
for any $t\in[t_+,\infty)$, $x_{p_1}(t)\in X_1(t)$, $x_{p_2}^i(t)\in X_2(t)$, $i=1,2$.
\end{proposition}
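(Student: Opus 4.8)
The plan is to verify that the global contractivity condition \eqref{GlobContr} implies both condition~\ref{SoglRN1} and condition~\ref{InvRN1} of Theorem~\ref{Th_GlobSolv}; then the conclusion follows at once from that theorem, whose remaining hypotheses (the smoothness of $f$, $A$, $B$, $C_2$, the index-not-higher-than-$1$ pencil condition \eqref{index1}, and condition~\ref{ExtensRN1}) are all retained unchanged.

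First I would verify \ref{SoglRN1}. Fix $t\in[t_+,\infty)$ and $x_{p_1}(t)\in X_1(t)$. By the algebraic equation \eqref{DAEsys2.2} (equivalently, by the definition \eqref{L_t} of $L_{t_+}$ together with the projector identities), a point $x_{p_1}(t)+x_{p_2}(t)$ lies in $L_{t_+}$ if and only if $x_{p_2}(t)=W(t,x_{p_1}(t),x_{p_2}(t))$ with $W$ as in \eqref{W}. Since $G^{-1}(t)Q_2(t)=B_2^{-1}(t)Q_2(t)$ has range in $X_2(t)$ and $P_2(t)$ acts as the identity there, the mapping $x_{p_2}\mapsto W(t,x_{p_1}(t),x_{p_2})$ sends the finite-dimensional, hence complete, space $X_2(t)$ into itself, and by \eqref{GlobContr},
\[
\big\|W(t,x_{p_1}(t),x_{p_2}^1)-W(t,x_{p_1}(t),x_{p_2}^2)\big\|=\big\|G^{-1}(t)Q_2(t)\big[f(t,x_{p_1}(t)+x_{p_2}^1)-f(t,x_{p_1}(t)+x_{p_2}^2)\big]\big\|\le\alpha\big\|x_{p_2}^1-x_{p_2}^2\big\|,
\]
so it is a contraction with constant $\alpha<1$. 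The Banach fixed-point theorem then provides a unique $x_{p_2}(t)\in X_2(t)$ solving this fixed-point equation, i.e.\ a unique $x_{p_2}(t)$ with $(t,x_{p_1}(t)+x_{p_2}(t))\in L_{t_+}$, which is exactly \ref{SoglRN1}.

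Next I would verify \ref{InvRN1}. Fix $t_*$, $x^*_{p_1}(t_*)\in X_1(t_*)$, $x^*_{p_2}(t_*)\in X_2(t_*)$ (the argument will not even need $(t_*,x^*_{p_1}(t_*)+x^*_{p_2}(t_*))\in L_{t_+}$). Since $\partial f/\partial x$ is continuous, taking $x_{p_2}^2=x_{p_2}^1+sv$ with $v\in X_2(t_*)$ in \eqref{GlobContr}, dividing by $|s|$ and letting $s\to0$ gives
\[
\Big\|G^{-1}(t_*)Q_2(t_*)\,\tfrac{\partial f}{\partial x}\big(t_*,x^*_{p_1}(t_*)+x^*_{p_2}(t_*)\big)\,v\Big\|\le\alpha\|v\|,\qquad v\in X_2(t_*),
\]
so the operator $M:=G^{-1}(t_*)Q_2(t_*)\,\tfrac{\partial f}{\partial x}\big(t_*,x^*_{p_1}(t_*)+x^*_{p_2}(t_*)\big)\,P_2(t_*)$, restricted to $X_2(t_*)$, has norm $\le\alpha<1$, whence $I_{X_2(t_*)}-M$ is invertible. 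Using $G^{-1}(t_*)B(t_*)P_2(t_*)=P_2(t_*)$ and $P_2(t_*)\big|_{X_2(t_*)}=I_{X_2(t_*)}$ one finds $G^{-1}(t_*)\,\Phi_{t_*,x^*_{p_1}(t_*),x^*_{p_2}(t_*)}\big|_{X_2(t_*)}=M-I_{X_2(t_*)}$, which is invertible; since $G^{-1}(t_*)$ maps $Y_2(t_*)$ invertibly onto $X_2(t_*)$ (being $B_2^{-1}(t_*)$ there), it follows that $\Phi_{t_*,x^*_{p_1}(t_*),x^*_{p_2}(t_*)}\colon X_2(t_*)\to Y_2(t_*)$ is invertible, i.e.\ \ref{InvRN1} holds. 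With \ref{SoglRN1} and \ref{InvRN1} established, Theorem~\ref{Th_GlobSolv} yields, for each consistent $(t_0,x_0)\in L_{t_+}$, a unique global solution of the IVP \eqref{DAE}, \eqref{ini}.

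The only mildly delicate point will be the passage from the finite-difference estimate \eqref{GlobContr} to the infinitesimal bound on $G^{-1}(t_*)Q_2(t_*)\,\partial_x f\,P_2(t_*)$: the increments must be taken inside the subspace $X_2(t_*)$ and the differentiability of $f$ invoked --- exactly where the standing hypothesis $\partial f/\partial x\in C([t_+,\infty)\times\Rn,\mathrm{L}(\Rn))$ is used. Everything else is routine bookkeeping with the identities $A(t)P_2(t)=0$, $G^{-1}(t)Q_2(t)=B_2^{-1}(t)Q_2(t)$ and $G^{-1}(t)B(t)P_2(t)=P_2(t)$ recalled in Section~\ref{Intro}.
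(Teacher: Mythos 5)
Your proof is correct: the Banach fixed-point argument on $X_2(t)$ gives condition~\ref{SoglRN1}, and differentiating the Lipschitz bound \eqref{GlobContr} to get $\|M\|\le\alpha<1$ on $X_2(t_*)$, combined with $G^{-1}(t_*)\Phi_{t_*,x^*_{p_1}(t_*),x^*_{p_2}(t_*)}\big|_{X_2(t_*)}=M-I$, gives condition~\ref{InvRN1}. This is essentially the route the paper intends --- it does not reprove the proposition here (it cites \cite[Assertion~2.1]{Fil.DE-1}) but explicitly remarks immediately afterwards that the contractivity condition \eqref{GlobContr} implies the conditions of Theorems~\ref{Th_GlobSolv} and~\ref{Th_GlobSolvBInv}, which is exactly what you have verified.
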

A proposition similar to \ref{Th_GlobSolvContr} holds true for Theorem \ref{Th_GlobSolvBInv} and its conditions \ref{SoglRN2},~\ref{BasInvRN1}.  Note that if the conditions of Proposition~\ref{Th_GlobSolvContr} are satisfied, then the conditions of Theorems \ref{Th_GlobSolv}, \ref{Th_GlobSolvBInv} hold as well, and that these theorems impose weaker constraints on the functions in the DAE than Proposition~\ref{Th_GlobSolvContr}.

Recall that $\dim X_2(t)=\dim Y_2(t)=d=\mathrm{const}$, $\dim X_1(t)=\dim Y_1(t)=n-d$, $t\!\in\! [t_+,\infty)$.
 \begin{theorem}[Lagrange stability of the DAE \eqref{DAE} {\cite[Theorem~2.5]{Fil.DE-1}}]\label{Th_UstLagrDAE}\;
Let $f\in C([t_+,\infty)\times \Rn,\Rn)$, $\dfrac{\partial f}{\partial x}\in C([t_+,\infty)\times\Rn,\mathrm{L}(\Rn))$, $A, B\in C^1([t_+,\infty),\mathrm{L}(\Rn))$, and let the pencil $\lambda A(t)+B(t)$ satisfy \eqref{index1}, where $C_2\in C^1([t_+,\infty),(0,\infty))$. Assume that conditions \ref{SoglRN1},~\ref{InvRN1} of Theorem~\ref{Th_GlobSolv} or \ref{SoglRN2},~\ref{BasInvRN1} of Theorem~\ref{Th_GlobSolvBInv} are satisfied and the following conditions hold:
\begin{enumerate}[1.]
\addtocounter{enumi}{2}
\item\label{LagrV}  There exist functions ${k\in C([t_+,\infty),\R)}$, ${U\in C(0,\infty)}$, a positive definite function ${V\in C^1\big([t_+,\infty)\times U_R^c(0),\R\big)}$ and a number ${R>0}$  such that ${\int\limits_{t_+}^{\infty}k(t)\, dt<\infty}$,\, ${\int\limits_{{\textstyle v}_0}^{\infty}\big(U(v)\big)^{-1} dv =\infty}$ \textup{(}${v_0>0}$ is some constant\textup{)} and it holds that

    \smallskip
  3.1.~~${V(t,z)\to\infty}$ uniformly in $t$ on $[t_+,\infty)$ as ${\|z\|\to\infty}$;

  \smallskip
  3.2.~~For all $t\in[t_+,\infty)$, $x_{p_1}(t)\in X_1(t)$, $x_{p_2}(t)\in X_2(t)$ such that $(t,x_{p_1}(t)+x_{p_2}(t))\in L_{t_+}$ and $\|x_{p_1}(t)\|\ge R$ inequality \eqref{LagrDAE} holds.

  \smallskip
\item\label{LagrA}  Let one of the following conditions be satisfied:
 \begin{enumerate}[4a.]
 \item\label{LagrA1}  For all ${(t,x_{p_1}(t)+x_{p_2}(t))\in L_{t_+}}$, ${\|x_{p_1}(t)\|\le M<\infty}$ \textup{(}$M$ is an arbitrary constant\textup{)} it holds that
     $$
     \|G^{-1}(t) Q_2(t)\big[f(t,x_{p_1}(t)+ x_{p_2}(t))-A'(t)x_{p_1}(t)\big]\|\le K_M\!<\!\infty,
     $$
     where $K_M=K(M)$ is some constant.

 \item\label{LagrA1.2} For all $(t,x_{p_1}(t)+x_{p_2}(t))\in L_{t_+}$, $\|x_{p_1}(t)\|\le M<\infty$ \textup{(}$M$ is an arbitrary constant\textup{)}, it holds that
     $$
     \|x_{p_2}(t)\|\le K_M<\infty,
     $$
     where $K_M=K(M)$ is some constant.

 \item\label{LagrA1.3} For each ${t_*\in [t_+,\infty)}$, there exists ${\Tilde{x}_{p_2}(t_*)\in X_2(t_*)}$ such that for each ${x_{p_i}^*(t_*)\in X_i(t_*)}$, ${i=1,2}$,   which satisfy ${(t_*,x_{p_1}^*(t_*)+x_{p_2}^*(t_*))\in L_{t_+}}$, the operator function \eqref{funcPhiBInv} is basis invertible on ${(\Tilde{x}_{p_2}(t_*),x_{p_2}^*(t_*)]}$ and the corresponding inverse operator (i.e.,~$\Big[\sum\limits_{k=1}^d\tilde{\Theta}_k(t_*) \Phi_{t_*,x_{p_1}^*(t_*)}(x_{p_2,k}(t_*))\Big]^{-1}$ where $\{x_{p_2,k}(t_*)\}_{k=1}^d$ is an arbitrary set of the elements from $(\Tilde{x}_{p_2}(t_*),x_{p_2}^*(t_*)]$ and $\{\tilde{\Theta}_k(t_*)\}_{k=1}^d$ is some additive resolution of the identity in~$Y_2(t_*)$\,) is bounded uniformly in $t_*,\, x_{p_2}(t_*)$ (i.e., in $t_*,\,x_{p_2,k}(t_*)$, ${k=1,\dots,d}$)\,  on $[t_+,\infty)$, $(\Tilde{x}_{p_2}(t_*), x_{p_2}^*(t_*)]$. In addition, let   $\sup\limits_{t_*\in [t_+,\infty)}\|\Tilde{x}_{p_2}(t_*)\|<\infty$ and
     $$
     \sup\limits_{t_*\in [t_+,\infty),\, \|x_{p_1}^*(t_*)\|\le M} \|G^{-1}(t_*)Q_2(t_*) \big[f(t_*,x_{p_1}^*(t_*)+ \Tilde{x}_{p_2}(t_*))-A'(t_*)x_{p_1}^*(t_*)\big]\|<\infty
     $$
     ($M$ is an arbitrary constant).
   \end{enumerate}
\end{enumerate}
Then the DAE \eqref{DAE} is Lagrange stable.
 \end{theorem}
Note that if condition~\ref{LagrV} of Theorem~\ref{Th_UstLagrDAE} holds, then condition~\ref{ExtensRN1} of Theorem~\ref{Th_GlobSolv} holds.
From this remark we obtain the following corollary.
 \begin{corollary}\label{CorUstLagrGlobSolv}
If all the conditions of Theorem~\ref{Th_UstLagrDAE} except \ref{LagrA} are fulfilled, then the conditions of Theorem~\ref{Th_GlobSolv} or~\ref{Th_GlobSolvBInv} (depending on whether conditions~\ref{SoglRN1},~\ref{InvRN1} of Theorem~\ref{Th_GlobSolv} or conditions~\ref{SoglRN2},~\ref{BasInvRN1} of Theorem~\ref{Th_GlobSolvBInv} are fulfilled)\, are satisfied and, consequently, for each initial point $(t_0,x_0)\in L_{t_+}$ there exists a unique global solution of the IVP \eqref{DAE}, \eqref{ini}.
 \end{corollary}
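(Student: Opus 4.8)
The plan is to observe that the hypotheses assumed in Corollary~\ref{CorUstLagrGlobSolv} — namely, all conditions of Theorem~\ref{Th_UstLagrDAE} except~\ref{LagrA} — already contain every hypothesis of Theorem~\ref{Th_GlobSolv} (or of Theorem~\ref{Th_GlobSolvBInv}), so that the global solvability claim follows by a direct invocation of that theorem. The whole argument is a matter of matching the two lists of assumptions, so I would proceed by going through them one by one.

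First I would note that the standing regularity assumptions are literally the same in all three theorems: $f\in C([t_+,\infty)\times\Rn,\Rn)$, $\partial f/\partial x\in C([t_+,\infty)\times\Rn,\mathrm{L}(\Rn))$, $A,B\in C^1([t_+,\infty),\mathrm{L}(\Rn))$, together with condition \eqref{index1} and $C_2\in C^1([t_+,\infty),(0,\infty))$; nothing needs to be checked here. Next, conditions~\ref{SoglRN1},~\ref{InvRN1} of Theorem~\ref{Th_GlobSolv} (respectively~\ref{SoglRN2},~\ref{BasInvRN1} of Theorem~\ref{Th_GlobSolvBInv}) are themselves part of the hypotheses of Theorem~\ref{Th_UstLagrDAE}, hence of the corollary, so they too are available for free. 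The only remaining ingredient of Theorem~\ref{Th_GlobSolv} is condition~\ref{ExtensRN1}, and I would derive it from condition~\ref{LagrV}.

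For that derivation, take the witnesses $k\in C([t_+,\infty),\R)$, $U\in C(0,\infty)$, the number $R>0$, and the positive definite $V\in C^1([t_+,\infty)\times U_R^c(0),\R)$ furnished by~\ref{LagrV}, and keep them unchanged as witnesses for~\ref{ExtensRN1}. Condition~\ref{ExtensRN1} asks only that $k\in C([t_+,\infty),\R)$, which is weaker than the integrability $\int_{t_+}^{\infty}k(t)\,dt<\infty$ already granted by~\ref{LagrV}; the condition $\int_{v_0}^{\infty}(U(v))^{-1}\,dv=\infty$ is identical in both; item~3.1 of~\ref{ExtensRN1} requires $V(t,z)\to\infty$ uniformly in $t$ merely on each finite subinterval $[a,b)\subset[t_+,\infty)$, which is an immediate restriction of the uniform convergence on all of $[t_+,\infty)$ postulated in item~3.1 of~\ref{LagrV}; and item~3.2 of~\ref{ExtensRN1} is verbatim item~3.2 of~\ref{LagrV}, i.e.\ inequality \eqref{LagrDAE} on the same set of points $(t,x_{p_1}(t)+x_{p_2}(t))\in L_{t_+}$ with $\|x_{p_1}(t)\|\ge R$. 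Hence~\ref{ExtensRN1} holds, and, reading Theorem~\ref{Th_GlobSolvBInv} in place of Theorem~\ref{Th_GlobSolv} throughout, the same reasoning shows its hypotheses hold when~\ref{SoglRN2},~\ref{BasInvRN1} are in force. Applying the relevant theorem then yields, for each $(t_0,x_0)\in L_{t_+}$, a unique global solution of the IVP \eqref{DAE}, \eqref{ini}, which is the assertion of the corollary. I do not expect any real obstacle: the only point that takes a line of care is the (trivial) observation that uniform convergence of $V(t,z)\to\infty$ on the half-line restricts to uniform convergence on each bounded subinterval, so the corollary is essentially bookkeeping — it records that the Lagrange-stability hypotheses (minus~\ref{LagrA}) subsume the global-solvability hypotheses.
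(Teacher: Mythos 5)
Your proposal is correct and follows exactly the paper's reasoning: the paper derives the corollary from the observation that condition~\ref{LagrV} of Theorem~\ref{Th_UstLagrDAE} implies condition~\ref{ExtensRN1} of Theorem~\ref{Th_GlobSolv}, the key point being that uniform convergence of $V(t,z)\to\infty$ in $t$ on all of $[t_+,\infty)$ restricts to uniform convergence on each finite subinterval, with the remaining hypotheses matching verbatim.
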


The theorem \cite[\emph{Theorem~2.9}]{Fil.DE-1} \emph{on the Lagrange instability of the DAE \eqref{DAE} gives conditions under which the DAE is Lagrange unstable}, and therefore does not have global solutions, \emph{for consistent initial points $(t_0,x_0)$ such that the component $P_1(t_0)x_0$ belongs to a certain region}.

 \begin{remark}\label{RemSmoothSol1}
The theorems \cite[Theorems 2.3 and 2.4]{Fil.DE-1} on the global solvability of the DAE \eqref{DAE1} guarantee that its solution $x\in C^1([t_0,\infty),\Rn)$.  If in these theorems $A,\, B\in  C^m([t_+,\infty),\mathrm{L}(\Rn))$, the function $C_2\in   C^m([t_+,\infty),(0,\infty))$ in \eqref{index1} and $f\in  C^m([t_+,\infty)\times\Rn,\Rn)$, where ${m\ge 1}$, then the solution $x\in  C^m([t_0,\infty),\Rn)$.
 \end{remark}

\textbf{Remark on the choice of the function $V$ in the above theorems (see \cite[Section~4]{Fil.DE-2}).}\quad
First, note that {\cite[Section~2]{Fil.DE-1}} provides the theorems on the global solvability, Lagrange stability and instability and ultimate boundedness (dissipativity)  of DAEs \eqref{DAE} and \eqref{DAE1}. A scalar function $V(t,z)$ is called a \emph{Lyapunov type function} if it satisfies one of the theorems mentioned above. The function
 \begin{equation}\label{funcV}
V(t,z)=(H(t)z,z),
 \end{equation}
where $H\in C^1([t_+,\infty),\mathrm{L}(\Rn))$ is a positive definite self-adjoint operator function (see \cite[Definition~1.1]{Fil.DE-1}),
satisfies the conditions of Theorems \ref{Th_GlobSolv}--\ref{Th_UstLagrDAE} on the global solvability and Lagrange stability and the theorem \cite[Theorem~2.9]{Fil.DE-1} on the Lagrange instability, however, the conditions on the derivative $V'_{\eqref{DAEsys2.1}}(t,x_{p_1}(t))$ remain in the theorems and need to be verified. A similar statement holds for the corresponding theorems obtained for the DAE \eqref{DAE1} in \cite{Fil.DE-1}. \cite[Section~4]{Fil.DE-2} provides detailed comments on the application of the function \eqref{funcV} when checking the conditions of the theorems presented in \cite{Fil.DE-1,Fil.DE-2}.  For $V$ of the form  \eqref{funcV}, the derivative $V'_{\eqref{DAEsys2.1}}(t,x_{p_1}(t))$ (presented in \eqref{LagrDAE}) has the form
 \begin{equation}  \label{VderivDAE}
\begin{aligned}
V'_{\eqref{DAEsys2.1}}(t,x_{p_1}(t))= \Big(H'(t)x_{p_1}(t),x_{p_1}(t)\Big)+ 2 \Big(H(t)x_{p_1}(t),\, \big[P'_1(t)-G^{-1}(t)Q_1(t)\, [A'(t)  & \\
+ B(t)]\,\big]x_{p_1}(t) +G^{-1}(t)Q_1(t)\, f\big(t,x_{p_1}(t)+x_{p_2}(t)\big)\Big). &
\end{aligned}
 \end{equation}

 \section{Combined numerical methods: the construction, convergence and orders}\label{NumMs}

 \subsection{Calculation of the spectral projectors and preliminaries}\label{NumProj}

One of the advantages of the numerical methods proposed in this paper is the possibility to numerically find the spectral projectors $P_i(t)$, $Q_i(t)$,  which enables one to numerically solve the DAE in the original form \eqref{DAE}. To calculate the spectral projectors, residues can be used (see \eqref{ProjRes} below). Recall that, by assumption, the pencil $\lambda A(t)+B(t)$ is either a regular pencil of index~1 or a regular pencil of index~0.

The definition of the index of a regular pencil is given in Section~\ref{Sect-Index}.

Now, suppose that $\lambda A(t)+B(t)$ is a regular pencil of index $\nu$\, (${\nu\in{\mathbb N}\cup\{0\}}$). It follows from \eqref{Proj.1} that  ${P_1(t)=\frac{1}{2\pi\ii} \oint\limits_{|\mu|=1/C_2(t)}\frac{(A(t)+\mu B(t))^{-1}A(t)}{\mu}d\mu}$ and ${Q_1(t)=\frac{1}{2\pi\ii} \oint\limits_{|\mu|=1/C_2(t)}\frac{A(t)(A(t)+\mu B(t))^{-1}}{\mu}d\mu}$.
Thus, for each $t\in[t_+,\infty)$ the projectors \eqref{Proj.1} can be calculated by using residues:
\begin{equation}\label{ProjRes}
\begin{aligned}
& P_1(t)=\res\limits_{\mu =0}\left(\dfrac{(A(t)+\mu B(t))^{-1}A(t)}{\mu} \right),\qquad
 && Q_1(t)=\res\limits_{\mu =0}\left(\dfrac{A(t)(A(t)+\mu B(t))^{-1}}{\mu} \right), \\
& P_2(t)=I_{\Rn}-P_1(t),\qquad   && Q_2(t)=I_{\Rn}-Q_1(t).
\end{aligned}
\end{equation}
Denote
$$
{R_p(\mu,t):=\dfrac{(A(t)+\mu B(t))^{-1}A(t)}{\mu}},\qquad {R_q(\mu,t):=\dfrac{A(t)(A(t)+\mu B(t))^{-1}}{\mu}},
$$
then
$$
{P_1(t)=\res\limits_{\mu=0}R_p(\mu,t)}, \qquad {Q_1(t)=\res\limits_{\mu =0}R_q(\mu,t)},
$$
where $t$ is a parameter.
It is clear that the function $(R_p)_{ij}(\mu,t)$ ($i,j=1,...,n$), i.e., $(i,j)$-entry of the matrix $R_p(\mu,t)$, is a rational function in $\mu$, and if $\mu=0$ is its pole of order $k$, then $(R_p)_{ij}(\mu,t)=\varphi(\mu,t)\mu^{-k}$, where $\varphi(\mu,t)$ is a polynomial in $\mu$ such that $\varphi(0,t)\ne 0$ (similarly for $(R_q)_{ij}(\mu,t)$).

Thus, \emph{the main steps of the algorithm used for the calculation of the projectors \eqref{ProjRes} are as follows:}
 \begin{enumerate}[Step 1.]
 \setlength\itemsep{2mm}
\item\label{step1} For $i,j=1,...,n$: \\
      1.1.~determine the order $k$ of a pole of the function $(R_p)_{ij}(\mu,t)$ at the point $\mu=0$; to do this, we convert $(R_p)_{ij}(\mu,t)$ to a rational form (with respect to $\mu$) and determine the order $k$ of the zero of the denominator at the point $\mu=0$, at that, if the denominator does not have a zero at $\mu=0$, then the order $k=0$; \\
      1.2.~if the order $k\ge 1$, then
      $$
      {(P_1)_{ij}(t)=\res\limits_{\mu=0}(R_p)_{ij}(\mu,t)= \dfrac{1}{(k-1)!} \dfrac{d^{k-1}}{d\mu^{k-1}} \big[(R_p)_{ij}(\mu,t)\, \mu^k\big]\bigg|_{\mu=0}},
      $$
      and if the order $k=0$, then $(P_1)_{ij}(t)=0$.\quad Finally, $P_1(t)=\big((P_1)_{ij}(t)\big)_{1\le i,j\le n}$.
\item\label{step2} Perform the same as in Step \ref{step1}, replacing ${R_p(\mu,t)=\big((R_p)_{ij}(\mu,t)\big)_{1\le i,j\le n}}$ with ${R_q(\mu,t)=\big((R_q)_{ij}(\mu,t)\big)_{1\le i,j\le n}}$ and, accordingly, $P_1(t)$ with $Q_1(t)=\big((Q_1)_{ij}(t)\big)_{1\le i,j\le n}$.
\item\label{step3} Having calculated $P_1(t)$ and $Q_1(t)$, find $P_2(t)=I_{\Rn}-P_1(t)$ and $Q_2(t)=I_{\Rn}-Q_1(t)$. \smallskip
 \end{enumerate}
In the case when the index of the pencil is 0, we have ${P_1(t)\equiv I_{\Rn}}$, ${P_2(t)\equiv 0}$, and ${Q_1(t)\equiv I_{\Rn}}$, ${Q_2(t)\equiv 0}$. In this case, the operator $A(t)$ is nondegenerate (i.e., invertible) for each $t\in[t_+,\infty)$ and the DAE \eqref{DAE} (as well as \eqref{DAE1}) can be reduced to an ODE.
Obviously, in the case when $A(t)\equiv 0$ and $B(t)$ is nondegenerate for each $t$, which corresponds to a particular case of the pencil $\lambda A(t)+B(t)$ of index~1, we have $P_1(t)\equiv 0$, $P_2(t)\equiv I_{\Rn}$, and $Q_1(t)\equiv 0$, $Q_2(t)\equiv I_{\Rn}$, and, in this case, the DAE is a purely algebraic equation (i.e., do not contain the derivative).

Note that in Step \ref{step1} of the above algorithm one can use another (but similar) approach which involves the calculation of the entire projector at once and the \emph{second option of Step \ref{step1} is as follows}:
\begin{enumerate}
 \setlength\itemsep{1mm}
\item[ ]
   1.1.~determine the order $\nu$ of a pole of $R_p(\mu,t)$ at the point $\mu=0$ (to do this, we can determine the order $k_{ij}=k$ of a pole of  $(R_p)_{ij}(\mu,t)$ at $\mu=0$ as described in item~1.1 of Step \ref{step1} above, and set $\nu=\max\limits_{i,j=1,...,n}\{k_{ij}\}$); \\
   1.2.~if the order $\nu\ge 1$, then $$
   {P_1(t)=\res\limits_{\mu=0}R_p(\mu,t)=\dfrac{1}{(\nu-1)!} \dfrac{d^{\nu-1}}{d\mu^{\nu-1}} \big[R_p(\mu,t)\,\mu^\nu\big]\bigg|_{\mu=0}},
   $$
   and if $\nu=0$, then $P_1(t)=0$.
\end{enumerate}

After computing the projectors, the \emph{auxiliary operator $G(t)$ is calculated by the formula \eqref{G(t)}}.

For the special cases when $A(t)$ is nondegenerate or is zero (for all $t$), the results obtained herein remain valid, but since the purpose was to construct numerical methods for the DAEs, we carry out further proofs for the case when $A(t)$ is degenerate but not identically zero and $\lambda A(t)+B(t)$ is a regular pencil of index~1, without comments on the form of the conditions for the special cases.

 \begin{remark}\label{RemConsistInis}
(cf. \cite[Remark~1.2]{Fil.DE-1})
Introduce the manifold
\begin{equation} \label{L_t}
L_{t_+} = \{(t,x)\in [t_+,\infty)\times\Rn \mid Q_2(t)[A'(t)P_1(t)x+B(t)x-f(t,x)]=0\}
\end{equation}
(in \eqref{L_t} the number $t_+$ is a parameter). The \emph{consistency condition $(t_0,x_0)\in L_{t_+}$}   for the initial point $(t_0,x_0)$ is one of the necessary conditions for the existence of a solution of the IVP \eqref{DAE}, \eqref{ini}. An initial point $(t_0,x_0)$ satisfying this condition is called a \emph{consistent initial point} and the corresponding initial values  $t_0$,~$x_0$ are called \emph{consistent initial values}.
 \end{remark}

We discretize the IVP \eqref{DAE}, \eqref{ini} on the uniform mesh
\begin{equation}\label{mesh}
\omega_h=\{t_i=t_0+ih,\; i=0,...,N-1,\; t_N=t_0+Nh=T\}
\end{equation}
with the step $h=(T-t_0)/N$\, (\,$[t_0,T]\subset[t_0,\infty)$ is any given interval). The value of an approximate solution at the point $t_i$ is denoted by $x_i$ ($i=0,...,N$).

An initial value $t_0$ is given, and an initial value $x_0$ for the IVP \eqref{DAE}, \eqref{ini} is chosen so that the consistency condition
$$
Q_2(t_0)\big[A'(t_0)P_1(t_0)x_0+B(t_0)x_0-f(t_0,x_0)\big]=0,\qquad \text{i.e.,}\quad (t_0,x_0)\in L_{t_+},
$$
is fulfilled. Accordingly, the initial values $t_0$ and $z_0=P_1(t_0)x_0$, $u_0=P_2(t_0)x_0$ satisfy the condition $(t_0,z_0+u_0)\in L_{t_+}$ as well.  The consistency condition for the initial values $t_0$,~$x_0$ ensures the best choice of initial values for the developed combined methods (more precisely, for the methods applied to the ``algebraic part'' of the DAE, which are combined with those applied to the ``differential part'').

   \smallskip
Below, Theorems \ref{Th_GlobSolv}, \ref{Th_GlobSolvBInv} ensuring the existence of a unique exact solution of the IVP \eqref{DAE}, \eqref{ini} and other results which are presented in Section \ref{GlobSolv} are used.

   \subsection{Method 1 (the simple combined method)}\label{Num-meth}

 \begin{theorem}\label{ThNum-meth}
Let the conditions of Theorem~\ref{Th_GlobSolv} or~\ref{Th_GlobSolvBInv} be satisfied, and in addition let the operator
$$
{\Phi_{t_*,P_1(t_*)z_*,P_2(t_*)u_*}= \Phi_{t_*,P_1(t_*)z_*}(P_2(t_*)u_*)\colon X_2(t_*)\to Y_2(t_*)},
$$
which is defined by \eqref{funcPhi} or \eqref{funcPhiBInv} for each (fixed) $t_*$,\, ${x^*_{p_1}(t_*)=P_1(t_*)z_*}$,\, ${x^*_{p_2}(t_*)=P_2(t_*)u_*}$,\,  be invertible for each point ${(t_*,P_1(t_*)z_*+P_2(t_*)u_*)\in [t_0,T]\times\Rn}$.
Let $A,\, B\in C^2([t_0,T],\mathrm{L}(\Rn))$, $C_2\in C^2([t_0,T],(0,\infty))$ (recall that the function $C_2(t)$ was introduced in \eqref{index1}), $f\in C^1([t_0,T]\times \Rn,\Rn)$ and let an initial value $x_0$ be chosen so that the consistency condition $(t_0,x_0)\in L_{t_+}$ holds. Then the method
\begin{align}
& z_0 \!= P_1(t_0)x_0,\quad u_0 \!= P_2(t_0)x_0, \label{met1}\\
& z_{i+1} \!= \Big(I_{\Rn} +h\big[P'_1(t_i)- G^{-1}(t_i)Q_1(t_i)[A'(t_i)+ B(t_i)] \big]P_1(t_i)\Big) z_i+  h\, G^{-1}(t_i)Q_1(t_i) f\big(t_i,x_i\big), \label{met2}\\
 \begin{split}
& u_{i+1} \!= u_i- \! \bigg[I_{\Rn}- G^{-1}(t_{i+1})Q_2(t_{i+1})\dfrac{\partial f}{\partial x}\big(t_{i+1},P_1(t_{i+1})z_{i+1}+ P_2(t_{i+1})u_i\big)\, P_2(t_{i+1})\bigg]^{-1}\bigg[u_i \\
&\qquad  -G^{-1}(t_{i+1})Q_2(t_{i+1}) \Big[f\big(t_{i+1},P_1(t_{i+1})z_{i+1}+ P_2(t_{i+1})u_i\big) - A'(t_{i+1})P_1(t_{i+1})z_{i+1}\Big]\bigg],
 \end{split} \label{met3}\\
& x_{i+1} \!=  P_1(t_{i+1})z_{i+1}+P_2(t_{i+1})u_{i+1},\quad t_{i+1}\in \omega_h,\quad  i=0,...,N-1, \label{met4}
\end{align}
which approximates the IVP \eqref{DAE}, \eqref{ini} on $[t_0,T]$, is convergent of order 1, that is,
$$
\max\limits_{1\le i\le N}\|x(t_i)-x_i\|= O(h),\quad h\to 0,
$$
where $x(t)$ is an exact solution of the IVP \eqref{DAE}, \eqref{ini} ($x_i$ is the value of an approximate solution at~$t_i$).
\end{theorem}

 \begin{proof}
Take any initial point $(t_0,x_0)\in L_{t_+}$ (i.e., $Q_2(t_0)\big[A'(t_0)P_1(t_0)x_0+B(t_0)x_0-f(t_0,x_0)\big]=0$).  By virtue of the theorem conditions, taking into account Remark \ref{RemSmoothSol}, we obtain that for each initial point $(t_0,x_0)\in L_{t_+}$ there exists a unique global (exact) solution $x(t)$ of the IVP \eqref{DAE}, \eqref{ini} such that $z(t)=P_1(t)x(t)\in C^2([t_0,T],\Rn)$ and $u(t)=P_2(t)x(t)\in C^1([t_0,T],\Rn)$ ($z\in C^1([t_0,\infty),\Rn)$, $u\in  C([t_0,\infty),\Rn)$ and $z(t)\in X_1(t)$, $u(t)\in X_2(t)$).

The DAE \eqref{DAE} is equivalent to the system \eqref{DAEsys2.1}, \eqref{DAEsys2.2} which can be written in the form:
 \begin{align*}
x_{p_1}'(t) &= \big[P'_1(t)-G^{-1}(t)\, Q_1(t)\, [A'(t)+ B(t)]\, \big]x_{p_1}(t) + G^{-1}(t)\, Q_1(t)f\big(t,x_{p_1}(t)+x_{p_2}(t)\big),  \\
x_{p_2}(t) &=G^{-1}(t)\, Q_2(t)\big[f\big(t,x_{p_1}(t)+x_{p_2}(t)\big)-A'(t)x_{p_1}(t)\big].
 \end{align*}

Let us introduce the mappings $\Pi,\, W\colon [t_+,\infty)\times\Rn\times \Rn\to \Rn$ of the following form:
 \begin{align}
&\Pi(t,z,u):=\! \big[P'_1(t)-G^{-1}(t)Q_1(t)\,[A'(t)+B(t)]\, \big] P_1(t)z + G^{-1}(t)Q_1(t)\, f(t,P_1(t)z+P_2(t)u),  \label{Pi} \\
&W(t,z,u):= G^{-1}(t)Q_2(t)\big[f(t,P_1(t)z+P_2(t)u)- A'(t)P_1(t)z\big]. \label{W}
 \end{align}
These mappings are continuous in $(t,z,u)$ and have continuous partial derivatives with respect to $z$, $u$ on $[t_+,\infty)\times \Rn\times \Rn$ due to the conditions of Theorem~\ref{ThNum-meth}, as well as Proposition~\ref{remNum-meth} presented below, and, in addition, they have a continuous partial derivative with respect to $t$ on $[t_+,\infty)\times \Rn\times \Rn$ due to the conditions of the theorem.

Consider the system
\begin{align}
 & z'(t)=\Pi(t,z(t),u(t)), \label{num1} \\
 & u(t)=W(t,z(t),u(t)). \label{num2}
\end{align}

Below is the lemma that was proved in the paper \cite{Fil.DE-1} (in this paper, equation \eqref{num2} was written in the form $F(t,z(t),u(t))=0$, where $F(t,z(t),u(t))=W(t,z(t),u(t))-u(t)$).
 \begin{lemma}[{\cite[Lemma 2.1]{Fil.DE-1}}]\label{LemSolEquiv}\,
If a function $x(t)$ is a solution of the DAE \eqref{DAE} on $[t_0,t_1)$ and satisfies the initial condition \eqref{ini},  then the functions $z(t)=P_1(t)x(t)$, $u(t)=P_2(t)x(t)$ are a solution of the system \eqref{num1}, \eqref{num2} on $[t_0,t_1)$, satisfy the initial conditions $z(t_0)=P_1(t_0)x_0$, $u(t_0)=P_2(t_0)x_0$, and $z\in C^1([t_0,t_1),\Rn)$, $u\in C([t_0,t_1),\Rn)$.

Conversely, if functions $z\in C^1([t_0,t_1),\Rn)$, $u\in C([t_0,t_1),\Rn)$ are a solution of the system  \eqref{num1}, \eqref{num2} on $[t_0,t_1)$ and satisfy the initial conditions $z(t_0)=P_1(t_0)x_0$, $u(t_0)=P_2(t_0)x_0$, then $P_1(t)z(t)=z(t)$, $P_2(t)u(t)=u(t)$ and the function $x(t)=z(t)+u(t)$ is a solution of the DAE \eqref{DAE} on $[t_0,t_1)$ and satisfies the initial condition \eqref{ini}.
 \end{lemma}

Note that if $u(t)\in\Rn$ satisfies relation \eqref{num2}, then $u(t)\in X_2(t)$ (i.e., $u(t)=P_2(t)u(t)$).

Using equality \eqref{num2}, where $t$ is replaced by $t+h$, and the Taylor expansion
 \begin{equation}\label{Taylor2}
\begin{split}
W\big(t+h,z(t+h),u(t+h)\big) &= W\big(t+h,z(t+h),u(t)\big) \\
 & +\dfrac{\partial W}{\partial u}\big(t+h,z(t+h),u(t)\big)\big[u(t+h)-u(t)\big]+O(h),
\end{split}
 \end{equation}
\begin{equation}\label{dW_du}
\dfrac{\partial W}{\partial u}\big(t+h,z(t\!+\! h),u(t)\big)\!=\! G^{-1}(t\!+\! h)Q_2(t\!+\! h) \dfrac{\partial f}{\partial x}\big(t\!+\! h,P_1(t\!+\! h)z(t\!+\! h)\!+\! P_2(t\!+\! h)u(t)\big) P_2(t\!+\! h),
\end{equation}
we obtain the relation
 \begin{equation}\label{approx_u-1}
\begin{split}
u(t+h) & =\! \bigg[I_{\Rn}-\dfrac{\partial W}{\partial u}\big(t+h,z(t+h),u(t)\big) \bigg]^{-1} \bigg[W\big(t+h,z(t+h),u(t)\big) \\
 & -\dfrac{\partial W}{\partial u}\big(t+h,z(t+h),u(t)\big) u(t) +O(h)\bigg].
\end{split}
 \end{equation}
Relation \eqref{approx_u-1} can be rewritten as
 \begin{equation}\label{approx_u-2}
\begin{split}
u(t+h) &= u(t)- \bigg[I_{\Rn}- \dfrac{\partial W}{\partial u}\big(t+h,z(t+h),u(t)\big)\bigg]^{-1} \Big[u(t) \\
 & -G^{-1}(t+ h)Q_2(t+h) \Big(f\big(t+h,P_1(t+h)z(t+h)+P_2(t+h)u(t)\big) \\
 & -A'(t+h)P_1(t+h)z(t+h)\Big)\Big]+ O(h),
\end{split}
 \end{equation}
where $\dfrac{\partial W}{\partial u}\big(t+h,z(t+h),u(t)\big)$ is defined in \eqref{dW_du}.
As a result, for the algebraic equation (AE) \eqref{num2} we obtain a method similar to the Newton method with respect to the component $u$ of the phase variable $x=z+u$.
The existence of the inverse operator used in the relations \eqref{approx_u-1} and  \eqref{approx_u-2} follows from the following statement: From the invertibility of the operator $\Phi_{t,P_1(t)z,P_2(t)u}$ (if in the theorem conditions it is assumed that the requirements of Theorem~\ref{Th_GlobSolv} are satisfied)  and the basis invertibility of the operator function $\Phi_{t,P_1(t)z}(P_2(t)u)$ (if in the theorem conditions it is assumed that the requirements of Theorem~\ref{Th_GlobSolvBInv} are satisfied) for any fixed $t\in [t_0,\infty)$, $z\in\Rn$, $P_2(t)u\in X_2(t)$ such that $F(t,z,P_2(t)u)=0$ (i.e., $(t,P_1(t)z+P_2(t)u)\in L_{t_0}$) and the invertibility of $\Phi_{t,P_1(t)z,P_2(t)u}$ and $\Phi_{t,P_1(t)z}(P_2(t)u)$ for any fixed point  $(t,P_1(t)z+P_2(t)u)\in [t_0,T]\times \Rn$ it follows that there exists the inverse operator (if the requirements of Theorem~\ref{Th_GlobSolv} hold)
 \begin{equation}\label{InvNum}
\begin{split}
& \bigg[I_{\Rn} -\dfrac{\partial W}{\partial u}\big(t+h,z(t+h),u(t)\big)\bigg]^{-1}   \\
 & =\!\! \bigg[I_{\Rn} -G^{-1}(t+h)Q_2(t+h)\dfrac{\partial f}{\partial x}\big(t+h,P_1(t+h)z(t+h)+P_2(t+h)u(t)\big)\, P_2(t+h)\bigg]^{-1}    \\
  &\qquad  =P_1(t+h)-\left[\Phi_{t+h,P_1(t+h)z(t+h),P_2(t+h)u(t)}\right]^{-1} G(t+h)P_2(t+h)\in \mathrm{L}(\Rn),
\end{split}
 \end{equation}
where $\Phi_{t,P_1(t)z,P_2(t)u}$ is the operator \eqref{funcPhi}, and the inverse operator (if the requirements of Theorem~\ref{Th_GlobSolvBInv} hold)
 \begin{equation}\label{BasInvNum}
\begin{split}
\bigg[I_{\Rn} &-\dfrac{\partial W}{\partial u}\big(t+h,z(t+h),u(t)\big)\bigg]^{-1} \\
&=P_1(t+h)-\left[\Phi_{t+h,P_1(t+h)z(t+h)}(P_2(t+h)u(t))\right]^{-1} G(t+h)P_2(t+h)\in \mathrm{L}(\Rn),
\end{split}
 \end{equation}
where $\Phi_{t,P_1(t)z}(P_2(t)u)$ is the operator \eqref{funcPhiBInv}, for the points $(t+h,P_1(t+h)z(t+h)+P_2(t+h)u(t))$ belonging to $L_{t_0}$ and $[t_0,T]\times \Rn$.
For clarity, we note that the inverse operators on the left sides of \eqref{InvNum} and \eqref{BasInvNum} are the same, but the formulas for them are written through $\Phi_{t,P_1(t)z,P_2(t)u}$ and $\Phi_{t,P_1(t)z}(P_2(t)u)$ respectively.

Using the representation
 \begin{equation}\label{Taylor1}
\frac{dz}{dt}(t)=\frac{z(t+h)-z(t)}{h}+O(h),\quad h\to 0,
 \end{equation}
we obtain (an analog of the explicit Euler method for the DE \eqref{num1})
 \begin{equation}\label{approx_z}
\begin{split}
 z(t+h)=z(t) &+h\, \Pi(t,z(t),u(t))+O(h^2)= \Big(I_{\Rn} +h\big[P'_1(t)-G^{-1}(t)Q_1(t) [A'(t) \\
 &+B(t)] \big]P_1(t)\Big) z(t)+h\, G^{-1}(t)Q_1(t) f\big(t,P_1(t)z(t)+P_2(t)u(t)\big)+O(h^2).
\end{split}
 \end{equation}

Taking into account the obtained equalities \eqref{approx_z}, \eqref{approx_u-2} and Lemma \ref{LemSolEquiv}, we write the IVP \eqref{DAE}, \eqref{ini} at the points $\{t_i\}_{i=0,...,N}$ of the introduced mesh $\omega_h$ in the form:
 \begin{align}
& z(t_0) \!=\! P_1(t_0)x_0,\quad u(t_0)\!=\! P_2(t_0)x_0,  \nonumber\\
 \begin{split}
& z(t_{i+1}) \!=\! z(t_i)+h\, \Pi(t_i,z(t_i),u(t_i))+O(h^2)\!=\!  \Big(I_{\Rn} +h\big[P'_1(t_i)   \\
 &\qquad\quad -G^{-1}(t_i)Q_1(t_i)[A'(t_i)+B(t_i)] \big]P_1(t_i)\Big) z(t_i)+h\, G^{-1}(t_i)Q_1(t_i) f\big(t_i,x(t_i)\big)+O(h^2),
 \end{split} \label{exact2}\\
 \begin{split}
& u(t_{i+1}) \!=\! u(t_i)\!-\! \bigg[I_{\Rn}\!-\! \dfrac{\partial W}{\partial u}\big(t_{i+1},z(t_{i+1}),u(t_i)\big)\bigg]^{-1} \Big[u(t_i)\!-\! W\big(t_{i+1},z(t_{i+1}),u(t_i)\big)\Big]\!+\! O(h)\!    \\
 &\;\; =u(t_i) \!-\! \bigg[I_{\Rn}\!-\! G^{-1}(t_{i+1})Q_2(t_{i+1})\dfrac{\partial f}{\partial x}\big(t_{i+1},P_1(t_{i+1})z(t_{i+1})\!+\! P_2(t_{i+1})u(t_i)\big) P_2(t_{i+1})\bigg]^{-1}    \\
 &\qquad\qquad\qquad\qquad
 \times \bigg[u(t_i)\!-\! G^{-1}(t_{i+1})Q_2(t_{i+1}) \Big[f\big(t_{i+1},P_1(t_{i+1})z(t_{i+1})\!+\! P_2(t_{i+1})u(t_i)\big)   \\
 &\qquad\qquad\qquad\qquad\qquad\qquad\qquad\qquad\qquad\qquad\qquad\quad
 -A'(t_{i+1})P_1(t_{i+1})z(t_{i+1})\Big]\bigg]\!+\! O(h),
 \end{split} \label{exact3}\\
& x(t_{i+1})= z(t_{i+1})+u(t_{i+1})=  P_1(t_{i+1})z(t_{i+1})+P_2(t_{i+1})u(t_{i+1}),\quad i= 0,...,N-1. \label{exact4}
\end{align}
Then the numerical method for solving the IVP \eqref{DAE}, \eqref{ini} on $[t_0,T]$ takes the form \eqref{met1}--\eqref{met4}, where $z_i$, $u_i$ ($i=0,...,N$) are values of the approximate solution of the system \eqref{num1}, \eqref{num2} at the point $t_i$, which satisfies the initial conditions $z(t_0)=P_1(t_0)x_0$ and $u(t_0)=P_2(t_0)x_0$, and $x_i$ ($i=0,...,N$) is a value of the approximate solution of the IVP \eqref{DAE}, \eqref{ini} at $t_i$.

Denote
\begin{equation}\label{const}
 \begin{split}
 &p_i=\sup\limits_{t\in [t_0,T]}\|P_i(t)\|,\quad q_i=\sup\limits_{t\in [t_0,T]}\|Q_i(t)\|,\quad i=1,2,\quad p=\max\{p_1,p_2\},            \\
 &\tilde{p}_1=\sup\limits_{t\in [t_0,T]}\|P'_1(t)\|,\quad  \tilde{a}=\sup\limits_{t\in [t_0,T]}\|A'(t)\|,\quad b=\sup\limits_{t\in [t_0,T]}\|B(t)\|,\quad \hat{g}=\sup\limits_{t\in [t_0,T]}\|G^{-1}(t)\|.
 \end{split}
\end{equation}
Since the partial derivative of $f(t,x)$ with respect to $x$ is continuous on $[t_+,\infty)\times \Rn$, then, using the finite increment formula, we obtain (for $i=1,...,N$):
\begin{equation}\label{FKP1}
\big\|f\big(t_i,P_1(t_i)z(t_i)+P_2(t_i)u(t_i)\big)- f(t_i,P_1(t_i)z_i+P_2(t_i)u_i)\big\|\le M p \big(\|z(t_i)-z_i\|+\|u(t_i)-u_i\|\big),
\end{equation}
where\;  $M=\!\max\limits_{1\le i\le N}\sup\limits_{\theta_i\in (0,1)} \bigg\|\dfrac{\partial f}{\partial x} \Big(t_i,P_1(t_{i})z_{i}+P_2(t_{i})u_{i}+ \theta_i \big(P_1(t_{i})[z(t_{i})-z_{i}]+ P_2(t_{i})[u(t_{i})-u_{i}]\big)\Big)\bigg\|$.

Denote
$$
\varepsilon^z_i=\|z(t_i)-z_i\|,\quad \varepsilon^u_i=\|u(t_i)-u_i\|.
$$
It follows from the initial condition that $\varepsilon^z_0=0$, $\varepsilon^u_0=0$, and from the formulas \eqref{met2}, \eqref{exact2} and \eqref{FKP1} we have:
$\varepsilon^z_1= O(h^2)$,
 \begin{equation}\label{num3}
\varepsilon^z_{i+1}\le \big(1+h\, [\tilde{p}_1+\hat{g}q_1(p_1(\tilde{a}+b)+Mp)]\,\big) \varepsilon^z_i+ h\hat{g}q_1Mp\, \varepsilon^u_i+ O(h^2).
 \end{equation}
Denote $\varkappa= \tilde{p}_1+\hat{g}q_1(p_1(\tilde{a}+b)+Mp)$,\,  $r(h)=1+\varkappa\, h$ and $\tilde{M}=\hat{g}q_1Mp$,  then \eqref{num3} takes the form
 \begin{equation}\label{num6}
\varepsilon^z_{i+1}\le r(h)\varepsilon^z_i+h \tilde{M}\varepsilon^u_i +O(h^2).
 \end{equation}
Using \eqref{num6} recursively, we obtain that\;
\begin{equation}
\varepsilon^z_{i+1}\le h\,\tilde{M}\sum\limits_{j=0}^i r^{i-j}(h) \varepsilon^u_j+ O(h^2)\sum\limits_{j=0}^i r^j(h),\quad i=0,...,N-1.
\end{equation}
Since $r^j(h)\le \left(1+\frac{(T-t_0)\varkappa}{N}\right)^N\le e^{(T-t_0)\varkappa}$, $j=1,...,N$, then
\begin{equation}\label{num10}
 \varepsilon^z_{i+1}\le O(h)\sum\limits_{j=1}^i \varepsilon^u_j+ O(h),\quad i=1,...,N-1.
\end{equation}

Further, using the formula
\begin{multline*}
u(t_{i+1})=G^{-1}(t_{i+1})Q_2(t_{i+1}) \big[f\big(t_{i+1},P_1(t_{i+1})z(t_{i+1})+P_2(t_{i+1})u(t_i)\big) -A'(t_{i+1})P_1(t_{i+1})z(t_{i+1})\big]   \\
+ G^{-1}({t_{i+1}})Q_2(t_{i+1}) \dfrac{\partial f}{\partial x}\big(t_{i+1},P_1(t_{i+1})z(t_{i+1})+P_2(t_{i+1})u(t_i)\big)\, P_2(t_{i+1}) \big[u(t_{i+1})-u(t_i)\big]+O(h)
\end{multline*}
and the corresponding formula for finding the approximate value $u_{i+1}$, that is,
\begin{multline*}
u_{i+1}=G^{-1}(t_{i+1})Q_2(t_{i+1}) \big[f\big(t_{i+1},P_1(t_{i+1})z_{i+1}+ P_2(t_{i+1})u_i\big)- A'(t_{i+1})P_1(t_{i+1})z_{i+1}\big]   \\
+G^{-1}({t_{i+1}})Q_2(t_{i+1}) \dfrac{\partial f}{\partial x}\big(t_{i+1},P_1(t_{i+1})z_{i+1}+P_2(t_{i+1})u_i\big) P_2(t_{i+1})\big[u_{i+1}-u_i\big],
\end{multline*}
we obtain the relation
\begin{multline*}
u(t_{i+1})-u_{i+1} = \bigg[I_{\Rn}-G^{-1}(t_{i+1})Q_2(t_{i+1})\dfrac{\partial f}{\partial x}\big(t_{i+1},P_1(t_{i+1})z_{i+1}+P_2(t_{i+1})u_i\big)\, P_2(t_{i+1})\bigg]^{-1}   \\
 \times\! \bigg[G^{-1}(t_{i+1})Q_2(t_{i+1}) \bigg( f\big(t_{i+1},P_1(t_{i+1})z(t_{i+1})+P_2(t_{i+1})u(t_i)\big)- f\big(t_{i+1},P_1(t_{i+1})z_{i+1}+P_2(t_{i+1})u_i\big)   \\ -A'(t_{i+1})P_1(t_{i+1})\big[z(t_{i+1})-z_{i+1}\big]+ \dfrac{\partial f}{\partial x}\big(t_{i+1},P_1(t_{i+1})z(t_{i+1})+P_2(t_{i+1})u(t_i)\big)\, P_2(t_{i+1}) \big[u(t_{i+1})   \\
 -u(t_i)\big]- \dfrac{\partial f}{\partial x}\big(t_{i+1},P_1(t_{i+1})z_{i+1}+P_2(t_{i+1})u_i\big)\, P_2(t_{i+1})\big[u(t_{i+1})-u(t_i)+u(t_i)-u_i\big]\bigg)+O(h)\bigg].
\end{multline*}

By the finite increment formula, we have that (for $i=0,...,N-1$):
 \begin{multline*}
\big\|f\big(t_{i+1},P_1(t_{i+1})z(t_{i+1})+P_2(t_{i+1})u(t_i)\big)- f\big(t_{i+1},P_1(t_{i+1})z_{i+1}+P_2(t_{i+1})u_i\big)\big\|    \\
\le \hat{M} p \big(\|z(t_{i+1})-z_{i+1}\|+\|u(t_i)-u_i\|\big),
 \end{multline*}
where $p$ is defined in \eqref{const} and
\begin{multline*}
\hat{M}=\max\limits_{0\le i\le N-1}\sup\limits_{\hat{\theta}_i\in (0,1)} \bigg\|\dfrac{\partial f}{\partial x}\Big(t_{i+1},P_1(t_{i+1})z_{i+1}+P_2(t_{i+1})u_i+\hat{\theta}_i \big(P_1(t_{i+1})[z(t_{i+1})-z_{i+1}]   \\ +P_2(t_{i+1})[u(t_{i})-u_{i}]\big)\Big)\bigg\|.
\end{multline*}
 Denote
$$
C_1=\sup\limits_{0\le i\le N-1} \left\|\dfrac{\partial f}{\partial x}\big(t_{i+1},P_1(t_{i+1})z(t_{i+1})+P_2(t_{i+1})u(t_i)\big)\right\|,
$$
 \begin{equation}\label{C2}
C_2=\sup\limits_{0\le i\le N-1} \left\|\dfrac{\partial f}{\partial x}\big(t_{i+1},P_1(t_{i+1})z_{i+1}+P_2(t_{i+1})u_i\big)\right\|,
 \end{equation}
 \begin{equation}\label{K}
K\!=\! \sup\limits_{0\le i\le N-1}\bigg\|\Big[I_{\Rn}\!-\! G^{-1}(t_{i+1})Q_2(t_{i+1}) \dfrac{\partial f}{\partial x}\big(t_{i+1},P_1(t_{i+1})z_{i+1}\!+\! P_2(t_{i+1})u_i\big)\, P_2(t_{i+1})\Big]^{-1}\bigg\|.
 \end{equation}
Then
\begin{multline*}
\varepsilon^u_{i+1}\le K\hat{g}q_2 \big[\hat{M}p (\varepsilon^z_{i+1}+\varepsilon^u_i)+ \tilde{a}p_1\varepsilon^z_{i+1}+ C_1p_2 O(h)+ C_2p_2 (O(h)+\varepsilon^u_i)\big] +O(h)   \\
=K\hat{g}q_2(\hat{M}p+\tilde{a}p_1)\varepsilon^z_{i+1}+ K\hat{g}q_2(\hat{M}p+C_2p_2)\varepsilon^u_i+O(h), \quad i=0,...,N-1.
\end{multline*}
Consequently, there exist the constants $\alpha=K\hat{g}q_2(\hat{M}p+\tilde{a}p_1)$ and $\beta=K\hat{g}q_2(\hat{M}p+C_2p_2)$ such that
\begin{equation}\label{num5}
\varepsilon^u_{i+1}\le \alpha\varepsilon^z_{i+1}+\beta\varepsilon^u_i +O(h),\quad i=0,...,N-1.
\end{equation}
From \eqref{num5}, \eqref{num10} and the relation $\varepsilon^z_1=O(h^2)$ we obtain
$${\varepsilon^u_{i+1}\le O(h) \sum\limits_{j=1}^i \varepsilon^u_j+ \beta\varepsilon^u_i +O(h)},\; {i=0,...,N-1}.
$$
Further, using the method of mathematical induction, we find that $\varepsilon^u_{i+1}=O(h)$, ${i=0,...,N-1}$, and given \eqref{num10} we
obtain $\varepsilon^z_{i+1}=O(h)$, $i=1,...,N-1$.
Consequently,
$$
\max\limits_{1\le i\le N}\varepsilon^u_i=\max\limits_{1\le i\le N}\|u(t_i)-u_i\|=O(h),\quad \max\limits_{1\le i\le N}\varepsilon^z_i= \max\limits_{1\le i\le N}\|z(t_i)-z_i\|=O(h),\quad  h\to 0,
$$
and hence $\max\limits_{1\le i\le N}\|x(t_i)-x_i\|=O(h)$, $h\to 0$ (recall that $\varepsilon^z_0=0$, $\varepsilon^u_0=0$ and $\|x(t_0)-x_0\|=0$).
Thus, the method \eqref{met1}--\eqref{met4} converges and has the first order.
 \end{proof}

 \begin{proposition}\label{remNum-meth}
If in Theorem~\ref{ThNum-meth} we do not require the additional smoothness for  $f$, $A, B$ and $C_2$, i.e., we assume that $f\in C([t_+,\infty)\times\Rn,\Rn)$, $\dfrac{\partial f}{\partial x}\in C([t_+,\infty)\times\Rn, \mathrm{L}(\Rn))$, $A, B\in C^1([t_+,\infty),\mathrm{L}(\Rn))$ and $C_2\in C^1([t_+,\infty),(0,\infty))$ (these restrictions are specified in Theorems \ref{Th_GlobSolv} and~\ref{Th_GlobSolvBInv}), then the method \eqref{met1}--\eqref{met4} is convergent, but may not have the first order, that is,
$$
{\max\limits_{1\le i\le N}\|x(t_i)-x_i\|=o(1)},\; {h\to 0}\; (\text{i.e.,}\; \max\limits_{1\le i\le N}\|x(t_i)-x_i\|\to 0,\; {h\to 0}\,),\; \|x(t_0)-x_0\|=0.
$$
 \end{proposition}
  \begin{proof}
The proof is carried out in the same way as the proof of Theorem~\ref{ThNum-meth}, where instead of  \eqref{Taylor2}, \eqref{Taylor1} we use the representations
 \begin{equation}\label{Taylor2-1_o-small}
\begin{split}
W\big(t\!+\! h,z(t\!+\! h),u(t\!+\! h)\big)= W\big(t\!+\! h,z(t\!+\! h),u(t)\big)\!+\!  \dfrac{\partial W}{\partial u}\big(t\!+\! h,z(t\!+\! h),u(t)\big) \big[u(t\!+\! h)  &\\
 -u(t)\big]+o(1),\quad \frac{dz}{dt}(t)=\frac{z(t\!+\! h)-z(t)}{h}+o(1),\quad h\to0. &
\end{split}
 \end{equation}
\end{proof}

Proposition \ref{remNum-meth} states that, in general, the conditions of Theorem \ref{Th_GlobSolv} or~\ref{Th_GlobSolvBInv} on the global solvability of the DAE are sufficient for the convergence of the methods, and only the invertibility of the operator $\Phi_{t_*,P_1(t_*)z_*,P_2(t_*)u_*}$ for every fixed ${(t_*,P_1(t_*)z_*+P_2(t_*)u_*)\in [t_0,T]\times\Rn}$ which does not belong to $L_{t_+}$ is additionally needed.

 \subsection{Method 2 (the combined method with recalculation)}\label{Num-Mod_meth}

\begin{theorem}\label{ThNum-meth2}
Let the conditions of Theorem~\ref{Th_GlobSolv} or~\ref{Th_GlobSolvBInv} be satisfied, and in addition let the operator
$$
{\Phi_{t_*,P_1(t_*)z_*,P_2(t_*)u_*}=\Phi_{t_*,P_1(t_*)z_*}(P_2(t_*)u_*)\colon X_2(t_*)\to Y_2(t_*)},
$$
which is defined by \eqref{funcPhi} or \eqref{funcPhiBInv} for each (fixed) $t_*$,\, ${x^*_{p_1}(t_*)=P_1(t_*)z_*}$,\, ${x^*_{p_2}(t_*)=P_2(t_*)u_*}$,  be invertible for each point ${(t_*,P_1(t_*)z_*+P_2(t_*)u_*)\in [t_0,T]\times\Rn}$.
In addition, let ${A,\, B\in C^3([t_0,T],\mathrm{L}(\Rn))}$, ${C_2\in C^3([t_0,T],(0,\infty))}$, ${f\in C^2([t_0,T]\times \Rn,\Rn)}$, and let an initial value $x_0$ be chosen so that the consistency condition ${(t_0,x_0)\in L_{t_+}}$ is satisfied. Then the method
 \begin{align}
z_0 &= P_1(t_0)x_0,\quad u_0= P_2(t_0)x_0,  \label{NImpMet1}\\
\widetilde{z}_{i+1} & =\!  \Big[I_{\Rn}+ h\Big(P'_1(t_i) - G^{-1}(t_i)Q_1(t_i)[A'(t_i)\!+\! B(t_i)]\Big) P_1(t_i)\Big] z_i\!+\! h\, G^{-1}(t_i)Q_1(t_i) f\big(t_i,x_i\big),  \label{NImpMet2}\\
\begin{split}
\widetilde{u}_{i+1} &=\! u_i \!-\! \bigg[I_{\Rn}\!-\!G^{-1}(t_{i+1})Q_2(t_{i+1})\dfrac{\partial f}{\partial x}\big(t_{i+1},P_1(t_{i+1})\widetilde{z}_{i+1}+ P_2(t_{i+1})u_i\big)\, P_2(t_{i+1})\bigg]^{-1}\! \bigg[u_i    \\
 & -G^{-1}(t_{i+1})Q_2(t_{i+1}) \Big[f\big(t_{i+1},P_1(t_{i+1})\widetilde{z}_{i+1}\!+\! P_2(t_{i+1})u_i\big) -A'(t_{i+1})P_1(t_{i+1})\widetilde{z}_{i+1}\Big]\bigg]\!,
\end{split}     \label{NImpMet3}\\
\begin{split}
z_{i+1} &= \! \bigg[I_{\Rn} \!+\! \dfrac{h}{2}\big(P'_1(t_i)- G^{-1}(t_i)Q_1(t_i)[A'(t_i)\!+\! B(t_i)]\big) P_1(t_i)\bigg] z_i \\
 & +\dfrac{h}{2}\big(P'_1(t_{i+1})- G^{-1}(t_{i+1})Q_1(t_{i+1})[A'(t_{i+1})\!+\! B(t_{i+1})] \big)P_1(t_{i+1})\widetilde{z}_{i+1}    \\
 & +\dfrac{h}{2}\bigg[G^{-1}(t_i)Q_1(t_i) f\big(t_i,x_i\big)\!+\!  G^{-1}(t_{i+1})Q_1(t_{i+1}) f\big(t_{i+1},P_1(t_{i+1})\widetilde{z}_{i+1}\!+\! P_2(t_{i+1})\widetilde{u}_{i+1}\big)\bigg]\!,
\end{split}  \label{NImpMet4}\\
\begin{split}
u_{i+1} &= \! u_i \!-\! \bigg[I_{\Rn}-G^{-1}(t_{i+1})Q_2(t_{i+1})\dfrac{\partial f}{\partial x}\big(t_{i+1},P_1(t_{i+1})z_{i+1}\!+\! P_2(t_{i+1})u_i\big)\, P_2(t_{i+1})\bigg]^{-1}\! \bigg[u_i    \\
 & -G^{-1}(t_{i+1})Q_2(t_{i+1}) \Big[f\big(t_{i+1},P_1(t_{i+1})z_{i+1}\!+\! P_2(t_{i+1})u_i\big) -A'(t_{i+1})P_1(t_{i+1})z_{i+1}\Big]\bigg]\!,
\end{split}  \label{NImpMet5}\\
x_{i+1} &= P_1(t_{i+1})z_{i+1}+P_2(t_{i+1})u_{i+1},\quad t_{i+1}\in \omega_h,\quad  i=0,...,N-1, \label{NImpMet6}
 \end{align}
which approximates the IVP \eqref{DAE}, \eqref{ini} on $[t_0,T]$, is convergent of order 2, that is,
$$
\max\limits_{1\le i\le N}\|x(t_i)-x_i\|=O(h^2),\quad h\to 0,
$$
where $x(t)$ is an exact solution of the IVP \eqref{DAE},\eqref{ini} ($x_i$ is the value of an approximate solution at~$t_i$).
\end{theorem}

 \begin{proof}
Take any initial point $(t_0,x_0)\in L_{t_+}$. By virtue of the theorem conditions, for each initial point $(t_0,x_0)\in L_{t_+}$ there exists a unique global (exact) solution $x(t)$ of the IVP \eqref{DAE}, \eqref{ini} such that $z(t)=P_1(t)x(t)\in C^3([t_0,T],\Rn)$ and $u(t)=P_2(t)x(t)\in C^2([t_0,T],\Rn)$  ($z\in C^1([t_0,\infty),\Rn)$, $u\in C([t_0,\infty),\Rn)$ and $z(t)\in X_1(t)$, $u(t)\in X_2(t)$).

As in the proof of the previous theorem, we consider the system \eqref{num1}, \eqref{num2}, where the mappings $\Pi(t,z,u)$ and $W(t,z,u)$ have the form \eqref{Pi} and  \eqref{W}. Lemma \ref{LemSolEquiv} remains valid.

Using equality \eqref{num2} where $t$ is replaced by $t+h$ and the Taylor expansion of the form
$$
W\big(t+h,z(t+h),u(t+h)\big)= W\big(t+h,z(t+h),u(t)\big)+\dfrac{\partial W}{\partial u}\big(t+h,z(t+h),u(t)\big)\big[u(t+h)-u(t)\big]+O(h^2),
$$
where $\dfrac{\partial W}{\partial u}\big(t+h,z(t+h),u(t)\big)$ has the form \eqref{dW_du},
we obtain the relation of the form \eqref{approx_u-1} where $O(h)$ is replaced by $O(h^2)$.
This relation can be written as
 \begin{equation}\label{Napprox_u-2}
\begin{split}
u(t+h)\!=\! u(t)\!-\! \bigg[I_{\Rn}\!-\! \dfrac{\partial W}{\partial u}\big(t\!+\! h,z(t\!+\! h),u(t)\big)\bigg]^{-1}\! \Big[u(t)\!-\! W\big(t\!+\! h,z(t\!+\! h),u(t)\big)\Big]\!+\! O(h^2)    &\\
 = u(t)\!-\!\bigg[I_{\Rn}\!-\! G^{-1}(t\!+\! h)Q_2(t\!+\! h)\dfrac{\partial f}{\partial x}\big(t\!+\! h,P_1(t\!+\! h)z(t\!+\! h)\!+\! P_2(t\!+\! h)u(t)\big)\, P_2(t\!+\! h)\bigg]^{-1}   &\\
  \times \bigg[u(t)\!-\! G^{-1}(t\!+\! h)Q_2(t\!+\! h) \Big(f\big(t\!+\! h,P_1(t\!+\! h)z(t\!+\! h)+P_2(t\!+\! h)u(t)\big)    &\\
 -A'(t\!+\! h)P_1(t\!+\! h)z(t\!+\! h)\Big)\bigg]\!+ O(h^2).   &
\end{split}
 \end{equation}
There exist the inverse operators \eqref{InvNum} and \eqref{BasInvNum} (when the requirements of Theorems~\ref{Th_GlobSolv} and~\ref{Th_GlobSolvBInv}, respectively, are fulfilled) for the points $(t+h,P_1(t+h)z(t+h)+P_2(t+h)u(t))\!\in\! L_{t_0}$ and $(t+h,P_1(t+h)z(t+h)+P_2(t+h)u(t))\!\in\! [t_0,T]\times \Rn$ (see the explanation in the proof of Theorem~\ref{ThNum-meth}).

As above, we denote by $z_i$, $u_i$ and $x_i$ ($i=0,...,N$) the values, at the points $t_i$, of an approximate solution of the system \eqref{num1}, \eqref{num2} that satisfies the initial conditions  $z(t_0)=P_1(t_0)x_0$ and $u(t_0)=P_2(t_0)x_0$
and of an approximate solution of the IVP \eqref{DAE}, \eqref{ini}, respectively.

To approximate the DE \eqref{num1}, we will use the Euler scheme with recalculation (such schemes are also called implicit and ``predictor-corrector'' schemes).

The preliminary value of $z(t)$ at the point $t_{i+1}$ is calculated using the explicit Euler method (as in method 1), i.e., the DE \eqref{num1} is approximated by the scheme $z(t+h)=z(t)+h\, \Pi(t,z(t),u(t))+O(h^2)$,  and the approximate value for $z(t_{i+1})$, which will be denoted by $\widetilde{z}_{i+1}$, is calculated by the formula \eqref{NImpMet2}, where $x_i=P_1(t_i)z_i+P_2(t_i)u_i$, or
$\widetilde{z}_{i+1}=z_i+h\, \Pi(t_i,z_i,u_i)$. Denote
 \begin{equation}\label{NExact2}
\begin{split}
\widetilde{z}(t_{i+1}) = & \, z(t_i)+h\, \Pi(t_i,z(t_i),u(t_i))=\Big(I_{\Rn} +h\big[P'_1(t_i)-G^{-1}(t_i)Q_1(t_i)[A'(t_i)   \\
& +B(t_i)]\big]P_1(t_i)\Big) z(t_i)+ h\, G^{-1}(t_i)Q_1(t_i) f\big(t_i,P_1(t_i)z(t_i)+P_2(t_i)u(t_i)\big).
\end{split}
 \end{equation}
Using the formula \eqref{Napprox_u-2} and substituting ${z(t_i+h)}=z(t_{i+1}):=\widetilde{z}(t_{i+1})$, we find the preliminary value of $u(t)$ at the point $t_{i+1}$, which will be denoted by $\widetilde{u}(t_{i+1})$. As a result, we have
 \begin{equation}
\begin{split}
 \widetilde{u}(t_{i+1})\!=\!u(t_i)\!-\!\! \bigg[I_{\Rn}\!-\! \dfrac{\partial W}{\partial u}\big(t_{i+1},\widetilde{z}(t_{i+1}),u(t_i)\big)\bigg]^{-1}\! \Big[u(t_i)\!-\! W\big(t_{i+1},\widetilde{z}(t_{i+1}),u(t_i)\big)\Big] \!\!+\!O(h^2)\!     &\\
=u(t_i)\!-\! \bigg[I_{\Rn}\!-\! G^{-1}(t_{i+1})Q_2(t_{i+1})\dfrac{\partial f}{\partial x}\big(t_{i+1},P_1(t_{i+1})\widetilde{z}(t_{i+1})\!+\! P_2(t_{i+1})u(t_i)\big)\, P_2(t_{i+1})\bigg]^{-1}   &\\
\times \bigg[u(t_i)-G^{-1}(t_{i+1})Q_2(t_{i+1}) \Big[f\big(t_{i+1},P_1(t_{i+1})\widetilde{z}(t_{i+1})\!+\! P_2(t_{i+1})u(t_i)\big)      &\\
 -A'(t_{i+1})P_1(t_{i+1})\widetilde{z}(t_{i+1})\Big]\bigg]\!+\! O(h^2).  &
\end{split} \label{NExact3}
 \end{equation}
The corresponding approximate value which is denoted by $\widetilde{u}_{i+1}$ takes the form \eqref{NImpMet3} or
$$
\widetilde{u}_{i+1}=u_i-\bigg[I_{\Rn}-\dfrac{\partial W}{\partial u}\big(t_{i+1},\widetilde{z}_{i+1},u_i\big)\bigg]^{-1} \Big[u_i-W\big(t_{i+1},\widetilde{z}_{i+1},u_i\big)\Big].
$$

Now let us perform the recalculation using the formula \eqref{NImpMet4}, i.e., the approximate value found for $z(t_{i+1})$ by the formula \eqref{NImpMet2} is refined using the expression
 \begin{equation*}
z_{i+1}=z_i+\dfrac{h}{2}\bigg[\Pi(t_i,z_i,u_i)+ \Pi(t_{i+1},\widetilde{z}_{i+1},\widetilde{u}_{i+1})\bigg],
 \end{equation*}
where $\widetilde{z}_{i+1}$ and $\widetilde{u}_{i+1}$ have the form \eqref{NImpMet2} and \eqref{NImpMet3}.

Substitute the values $z(t_i)$, $u(t_i)$ of the exact solution into \eqref{NImpMet4} and write the expression for finding the residual (approximation error):
\begin{equation}\label{Residual}
\psi_i(h)= -\dfrac{z(t_{i+1})-z(t_i)}{h}+ \dfrac{1}{2} \bigg[\Pi\big(t_i,z(t_i),u(t_i)\big)+ \Pi\big(t_{i+1},z(t_i)\!+\! h\Pi(t_i,z(t_i),u(t_i)),\widetilde{u}(t_{i+1})\big)\bigg]\!,\!
\end{equation}
where $\widetilde{u}(t_{i+1})$ is defined by \eqref{NExact3}.

Using the Taylor formula, we obtain the following expansions (as $h\to 0$):
 \begin{equation}\label{appr1_psi}
\dfrac{z(t_{i+1})-z(t_i)}{h}=z'(t_i)+\dfrac{h}{2}z''(t_i)+O(h^2),
\end{equation}
\begin{equation}\label{appr2_psi}
 \begin{split}
\Pi\big(t_{i+1},z(t_i)+h\, \Pi(t_i,z(t_i),u(t_i)),u(t_{i+1})\big)= \Pi\big(t_i,z(t_i),u(t_i)\big)+ h\bigg[\dfrac{\partial\Pi}{\partial t}\big(t_i,z(t_i),u(t_i)\big)   &\\
+ \dfrac{\partial\Pi}{\partial x}\big(t_i,z(t_i),u(t_i)\big) \bigg(P_1(t_i)\Pi\big(t_i,z(t_i),u(t_i)\big)+ P_2(t_i)\dfrac{du}{dt}(t_i)\bigg)\bigg]+ O(h^2). &
\end{split}
\end{equation}
It follows from \eqref{num1}, \eqref{Residual}, \eqref{appr1_psi}, \eqref{appr2_psi} and the equality
$$
z''(t_i)=  \dfrac{\partial\Pi}{\partial t}\big(t_i,z(t_i),u(t_i)\big)+ \dfrac{\partial\Pi}{\partial x}\big(t_i,z(t_i),u(t_i)\big) \bigg[P_1(t_i) \Pi\big(t_i,z(t_i),u(t_i)\big)+P_2(t_i)\dfrac{du}{dt}(t_i)\bigg]
$$
that
$$
\psi_i(h)= O(h^2).
$$
Thus, the value of $z(t)$ at the point $t_{i+1}$ is finally calculated by the following formula (where $\widetilde{u}(t_{i+1})$ has the form \eqref{NExact3}):
 \begin{equation}\label{NExact4}
z(t_{i+1})=z(t_i)+\dfrac{h}{2}\bigg[\Pi(t_i,z(t_i),u(t_i))+ \Pi\big(t_{i+1},z(t_i)+h\Pi(t_i,z(t_i),u(t_i)),\widetilde{u}(t_{i+1})\big)\bigg]\!+ O(h^3).
 \end{equation}

Further, we carry out the recalculation of the value of $u(t_{i+1})$, using the same formula as before, but with the value of $z(t_{i+1})$ refined by the formula \eqref{NExact4}:
 \begin{equation*}
\begin{split}
u(t_{i+1})\!=\!u(t_i)\!-\!\! \bigg[I_{\Rn}\!-\! G^{-1}(t_{i+1})Q_2(t_{i+1})\dfrac{\partial f}{\partial x}\big(t_{i+1},P_1(t_{i+1})z(t_{i+1})\!+\! P_2(t_{i+1})u(t_i)\big) P_2(t_{i+1})\bigg]^{-1}\! \bigg[u(t_i)  &\\
{}-G^{-1}(t_{i+1})Q_2(t_{i+1}) \Big[f\big(t_{i+1},P_1(t_{i+1})z(t_{i+1})+P_2(t_{i+1})u(t_i)\big) -A'(t_{i+1})P_1(t_{i+1})z(t_{i+1})\Big]\bigg]\!+O(h^2).  &
\end{split}
 \end{equation*}

A solution of the IVP \eqref{DAE}, \eqref{ini} at the points of the introduced mesh \eqref{mesh} is calculated by the formula  \eqref{exact4}.

Denote
$$
\varepsilon^z_i=\|z(t_i)-z_i\|,\; \varepsilon^u_i=\|u(t_i)-u_i\|,\; i=0,...,N;\;\; \widetilde{\varepsilon}^z_i=\|\widetilde{z}(t_i)-\widetilde{z}_i\|,\; \widetilde{\varepsilon}^u_i=\|\widetilde{u}(t_i)-\widetilde{u}_i\|,\; i=1,...,N.
$$
It follows from the initial condition that $\varepsilon^z_0=\varepsilon^u_0=0$.
From the above, we obtain the inequality ${\varepsilon^z_{i+1}\le \varepsilon^z_i+ O(h^3)+h\|\varphi_i(h)\|}$, where
 \begin{equation*}
\begin{split}
\varphi_i(h) =& -0.5\Big[\Pi\big(t_i,z(t_i),u(t_i)\big)-\Pi(t_i,z_i,u_i)  \\
 &{} +\Pi\big(t_{i+1},z(t_i)+ h\Pi\big(t_i,z(t_i),u(t_i)\big),\widetilde{u}(t_{i+1})\big)- \Pi(t_{i+1},z_i+h\Pi(t_i,z_i,u_i),\widetilde{u}_{i+1})\Big].
\end{split}
 \end{equation*}
Introduce the estimates \eqref{const}, \eqref{FKP1}.
Then
 \begin{equation}\label{Pi1}
\|\Pi\big(t_i,z(t_i),u(t_i)\big)-\Pi(t_i,z_i,u_i)\|\le k p_1 \varepsilon^z_i + \tilde{M}(\varepsilon^z_i +\varepsilon^u_i)= O(\varepsilon^z_i)+O(\varepsilon^u_i),
 \end{equation}
where ${\tilde{M}=\hat{g}q_1Mp}$ and ${k=\tilde{p}_1+\hat{g}q_1[\tilde{a}+b]}$.
Similarly, using the finite increment formula and the estimates \eqref{const} and \eqref{Pi1}, we obtain
 \begin{multline*}
\|\Pi\big(t_{i+1},z(t_i)+h\Pi\big(t_i,z(t_i),u(t_i)\big),\widetilde{u}(t_{i+1})\big)- \Pi\big(t_{i+1},z_i+h\Pi(t_i,z_i,u_i),\widetilde{u}_{i+1}\big)\|   \\
 \le \big[k p_1+\widehat{M}+O(h)\big] \varepsilon^z_i+ O(h)\varepsilon^u_i+ \widehat{M}\,\widetilde{\varepsilon}^u_{i+1},
 \end{multline*}
\begin{flalign*}
&\text{where}\quad & &\widehat{M}=\hat{g}q_1 \tilde{\tilde{M}}p, \qquad
\tilde{\tilde{M}}=\!\max\limits_{0\le i\le N-1}\sup\limits_{\theta_{i+1}\in (0,1)} \bigg\|\dfrac{\partial f}{\partial x} \Big(t_{i+1},\widetilde{x}_{i+1}+ \theta_{i+1} \big[\widetilde{x}(t_{i+1})-\widetilde{x}_{i+1}\big]\Big)\bigg\|,  \\
& & &\widetilde{x}(t_{i+1})= P_1(t_{i+1})\big[z(t_i)+h\Pi\big(t_i,z(t_i),u(t_i)\big)\big]+ P_2(t_{i+1})\widetilde{u}(t_{i+1}),  \\
& & &\widetilde{x}_{i+1}= P_1(t_{i+1})\big[z_i+h\Pi(t_i,z_i,u_i)\big]+ P_2(t_{i+1})\widetilde{u}_{i+1}.
\end{flalign*}
Thus,
 $$
\|\varphi_i(h)\|\le (O(1)+O(h))[\varepsilon^z_i+\varepsilon^u_i]+O(1)\widetilde{\varepsilon}^u_{i+1},
 $$
and hence
 \begin{equation}\label{Nnum6}
\begin{split}
\varepsilon^z_{i+1}\le [1+O(h)+O(h^2)]\varepsilon^z_i+ [O(h)+O(h^2)]\varepsilon^u_i+ & O(h)\widetilde{\varepsilon}^u_{i+1}+O(h^3)   \\
 & =\hat{r}(h)\varepsilon^z_i+O(h) [\varepsilon^u_i+\widetilde{\varepsilon}^u_{i+1}]+ O(h^3),
\end{split}
 \end{equation}
 \begin{equation}\label{TildeEps_z_1}
\widetilde{\varepsilon}^z_{i+1}\le \varepsilon^z_i +h\|\Pi\big(t_i,z(t_i),u(t_i)\big)-\Pi(t_i,z_i,u_i)\|= \hat{r}(h)\varepsilon^z_i+O(h)\varepsilon^u_i,\;\; i=0,...,N-1,
 \end{equation}
where $\hat{r}(h)=1+O(h)$.

Further, the expression
\begin{multline*}
u(t_{i+1})=G^{-1}(t_{i+1})Q_2(t_{i+1}) \big[f\big(t_{i+1},P_1(t_{i+1})z(t_{i+1})+P_2(t_{i+1})u(t_i)\big) -A'(t_{i+1})P_1(t_{i+1})z(t_{i+1})\big]  \\
 +G^{-1}({t_{i+1}})Q_2(t_{i+1}) \dfrac{\partial f}{\partial x}\big(t_{i+1},P_1(t_{i+1})z(t_{i+1})+ P_2(t_{i+1})u(t_i)\big)\, P_2(t_{i+1}) \big[u(t_{i+1})-u(t_i)\big]+O(h^2)
\end{multline*}
and the corresponding expression for finding the approximate value $u_{i+1}$, that is,
\begin{multline*}
u_{i+1}=G^{-1}(t_{i+1})Q_2(t_{i+1}) \big[f\big(t_{i+1},P_1(t_{i+1})z_{i+1}+P_2(t_{i+1})u_i\big) -A'(t_{i+1})P_1(t_{i+1})z_{i+1}\big]  \\
 +G^{-1}({t_{i+1}})Q_2(t_{i+1}) \dfrac{\partial f}{\partial x}\big(t_{i+1},P_1(t_{i+1})z_{i+1}+ P_2(t_{i+1})u_i\big)\, P_2(t_{i+1})\big[u_{i+1}-u_i\big],
\end{multline*}
yield
 \begin{multline*}
u(t_{i+1})-u_{i+1}=  \bigg[I_{\Rn}-G^{-1}(t_{i+1})Q_2(t_{i+1})\dfrac{\partial f}{\partial x} \big(t_{i+1},P_1(t_{i+1})z_{i+1}+P_2(t_{i+1})u_i\big) P_2(t_{i+1})\bigg]^{-1}   \\
 \times\bigg(G^{-1}(t_{i+1})Q_2(t_{i+1}) \bigg[\dfrac{\partial f}{\partial x} \big(t_{i+1},P_1(t_{i+1})z_{i+1}+P_2(t_{i+1})u_i\big)-A'(t_{i+1})\bigg]\, P_1(t_{i+1})[z(t_{i+1})-z_{i+1}]   \\ +O\big([\|z(t_{i+1})-z_{i+1}\|+\|u(t_i)-u_i\|]^2\big) +O\big([\|z(t_{i+1})-z_{i+1}\|+\|u(t_i)-u_i\|]\, \|u(t_{i+1})-u(t_i)]\|\big) + O(h^2)\bigg).                                                              \end{multline*}
As in method 1, we denote by $C_2$ and $K$ the constants \eqref{C2} and \eqref{K}.
Then
$$
\varepsilon^u_{i+1}\le K \Big(\hat{g}q_2 [C_2+ \tilde{a}] p_1 \varepsilon^z_{i+1} +O([\varepsilon^z_{i+1}+\varepsilon^u_i]^2)+ O(\varepsilon^z_{i+1}+\varepsilon^u_i)O(h)+O(h^2)\Big),
$$
and consequently
 \begin{equation}\label{Eps_u_1}
\varepsilon^u_{i+1} = O(\varepsilon^z_{i+1})+ O\big((\varepsilon^z_{i+1})^2+(\varepsilon^u_i)^2\big)+O(h^2),\quad  i=0,...,N-1.
 \end{equation}
Similarly, we find that
 \begin{equation}\label{TildeEps_u_1}
\widetilde{\varepsilon}^u_{i+1}= O(\widetilde{\varepsilon}^z_{i+1})+ O\big((\widetilde{\varepsilon}^z_{i+1})^2+(\varepsilon^u_i)^2\big)+O(h^2),\quad  i=0,...,N-1.
 \end{equation}
Substituting \eqref{TildeEps_z_1} into \eqref{TildeEps_u_1} and carrying out certain manipulations, we obtain
 \begin{equation}\label{TildeEps_u_1-2}
\widetilde{\varepsilon}^u_{i+1} = [O(1)+O(h)]\big(\varepsilon^z_i+ (\varepsilon^z_i)^2+(\varepsilon^u_i)^2\big)+O(h)\varepsilon^u_i+O(h^2),\quad  i=0,...,N-1.
 \end{equation}
Substituting \eqref{TildeEps_u_1-2} into \eqref{Nnum6}, we get
 \begin{equation}\label{Eps_z1-2}
\varepsilon^z_{i+1}\le \hat{r}(h)\varepsilon^z_i+O(h)\varepsilon^u_i+ O(h)\big[(\varepsilon^z_i)^2+(\varepsilon^u_i)^2\big]+ O(h^3).
 \end{equation}
From the above, we obtain $\varepsilon^z_0=\varepsilon^u_0=\widetilde{\varepsilon}^z_1=0$,  $\widetilde{\varepsilon}^u_1=O(h^2)$, $\varepsilon^u_1=O(h^2)$, $\varepsilon^z_1=O(h^3)$, $\widetilde{\varepsilon}^u_2=O(h^2)$, $\varepsilon^u_2=O(h^2)$ and $\varepsilon^z_2=O(h^3)$.
Using \eqref{Eps_z1-2}, we find recurrently the estimate
 \begin{equation}\label{Eps_z1-3}
\varepsilon^z_{i+1}\le O(h) \bigg( \sum\limits_{j=0}^i\big[\varepsilon^u_j+(\varepsilon^u_j)^2\big]+  \sum\limits_{k=1}^{i-1}\sum\limits_{j=0}^{i-k} \big[\varepsilon^u_j+(\varepsilon^u_j)^2\big]^{2k} \bigg)+ O(h^2), \quad  i=2,...,N-1.
 \end{equation}
Substituting \eqref{Eps_z1-3} into \eqref{Eps_u_1}, we get
 \begin{equation}\label{Eps_u_1-2}
\begin{split}
\varepsilon^u_{i+1}\le\, &  O(h)\sum\limits_{j=0}^i \big[\varepsilon^u_j+(\varepsilon^u_j)^2+(\varepsilon^u_j)^4\big]+  O((\varepsilon^u_i)^2)+ O(h)\sum\limits_{k=1}^{i-1}\sum\limits_{j=0}^{i-k} \big[\varepsilon^u_j+(\varepsilon^u_j)^2\big]^{2k}    \\
& + \sum\limits_{k=1}^{i-1}\sum\limits_{j=0}^{i-k} \big[\varepsilon^u_j+(\varepsilon^u_j)^2\big]^{4k} +O(h^2), \quad  i=2,...,N-1.
\end{split}
 \end{equation}
Further, using the method of mathematical induction, we find that $\varepsilon^u_{i+1}=O(h^2)$, $i=0,...,N-1$. Then it follows from \eqref{Eps_z1-3} that $\varepsilon^z_{i+1}=O(h^2)$, $i=0,...,N-1$. Hence,
$$
\max\limits_{1\le i\le N}\varepsilon^u_i=O(h^2),\quad \max\limits_{1\le i\le N}\varepsilon^z_i=O(h^2),\qquad h\to 0,
$$
and therefore $\max\limits_{0\le i\le N}\|x(t_i)-x_i\|=O(h^2)$, $h\to 0$. Thus, the method \eqref{NImpMet1}--\eqref{NImpMet6} converges and has the second order.
\end{proof}
 \begin{proposition}\label{remModNum-meth}
If in Theorem~\ref{ThNum-meth2} we do not require the additional smoothness for $f$, $A, B$ and $C_2$, i.e., we assume that $f\in C([t_+,\infty)\times\Rn,\Rn)$, $\dfrac{\partial f}{\partial x}\in C([t_+,\infty)\times\Rn, \mathrm{L}(\Rn))$, $A, B\in C^1([t_+,\infty),\mathrm{L}(\Rn))$ and $C_2\in C^1([t_+,\infty),(0,\infty))$ (these restrictions are specified in Theorems \ref{Th_GlobSolv} and~\ref{Th_GlobSolvBInv}), then the method \eqref{NImpMet1}--\eqref{NImpMet6} is convergent, but may not have the second order.
 \end{proposition}
 \begin{proof}
The proof is carried out in the same way as the proof of Theorem~\ref{ThNum-meth2}, but with the use of the representations \eqref{Taylor2-1_o-small}.
 \end{proof}

Note that if condition~\ref{InvRN1} of Theorem~\ref{Th_GlobSolv} is fulfilled (i.e., the operator \eqref{funcPhi} is invertible)   for each $t_*\!\in\![t_+,\infty)$, $x_{p_1}^*(t_*)\!\in\! X_1(t_*)$, $x_{p_2}^*(t_*)\!\in\! X_2(t_*)$, and not only for those that  $(t_*,x_{p_1}^*(t_*)+x_{p_2}^*(t_*)) \!\in\! L_{t_+}$,  or if condition~\ref{BasInvRN1} of Theorem \ref{Th_GlobSolvBInv} is fulfilled (i.e., the operator function \eqref{funcPhiBInv} is basis invertible on $[x_{p_2}^1(t_*),x_{p_2}^2(t_*)]$\,)\,   for each $t_*\!\in\![t_+,\infty)$, $x_{p_1}^*(t_*)\!\in\! X_1(t_*)$, $x_{p_2}^i(t_*)\!\in\! X_2(t_*)$, ${i=1,2}$, then in Theorems~\ref{ThNum-meth} and~\ref{ThNum-meth2} it is not necessary to check the fulfillment of the additional condition of the invertibility of the operator $\Phi_{t_*,P_1(t_*)z_*,P_2(t_*)u_*}= \Phi_{t_*,P_1(t_*)z_*}(P_2(t_*)u_*)$  for each $(t_*,P_1(t_*)z_*+P_2(t_*)u_*)\in [t_0,T]\!\times\!\Rn$.

 \begin{remark}\label{Num-methDAE1}
Since it is assumed that the operator function $A(t)$ is continuously differentiable, we can write the DAE \eqref{DAE1}, i.e., $A(t)\frac{d}{dt}x(t)+B(t)x(t)=f(t,x(t))$, in the form
\begin{equation}\label{DAE1form2}
\dfrac{d}{dt}[A(t)x(t)]+\widetilde{B}(t)x(t)=f(t,x(t)),\quad \text{where}\quad \widetilde{B}(t)=B(t)-A'(t),
\end{equation}
and use the numerical methods obtained for the DAE of the form \eqref{DAE}.

For the IVP \eqref{DAE1form2}, \eqref{ini} as well as for the IVP  \eqref{DAE1}, \eqref{ini}, the \emph{consistency condition} for the initial values $t_0$,~$x_0$ takes the form
$$
(t_0,x_0)\in \widehat{L}_{t_+} = \{(t,x)\in [t_+,\infty)\times\Rn \mid Q_2(t)[B(t)x-f(t,x)]=0\}.
$$
 \end{remark}

  \section{Numerical experiments}\label{NumExp}

\vspace*{-1mm}

In Sections \ref{ApplPIMM}, \ref{ApplDE} we carry out the theoretical and numerical analyses of mathematical models of the dynamics of electric circuits, which demonstrate the application of the developed methods and obtained theorems to a real physical problems and show that the theoretical and numerical results are consistent.

In Section~\ref{CompareMeth}, the comparative analysis of the obtained methods is carried out and numerical examples illustrating the proved convergence are presented.

All computations were done using Matlab.

  \subsection{Example 1: Analysis of a mathematical model of the electrical circuit dynamics}\label{ApplPIMM}

 \subsubsection{Theoretical analysis of the mathematical model of the electrical circuit dynamics}\label{TheorAnalPIMM}

\vspace*{-1mm}

Consider the simple electrical circuit with a time-varying inductance $L(t)$, time-varying linear resistances $R(t)$, $R_L(t)$ and nonlinear resistances  $\varphi_L (I_L)$, $\varphi (I_{\varphi})$, whose dynamics is described by the DAE \eqref{DAE} (where we omit the dependence on $t$ in the notation of the variable $x(t)$) with
 \begin{equation}\label{NestCoefDAE}
x \!=\!\!\begin{pmatrix} x_1 \\ x_2 \\ x_3 \end{pmatrix}\!\!,\,
A(t) \!=\!\!\begin{pmatrix} L(t) & 0 & 0 \\ 0 & 0 & 0 \\ 0 & 0 & 0 \end{pmatrix}\!\!,\,
B(t) \!=\!\!\begin{pmatrix} R_L(t) & -1 & 0\\ 1 & 0 & 1\\ 0 & 1 & -R(t)\end{pmatrix}\!\!,\,
f(t,x) \!=\!\!\begin{pmatrix} -\varphi_L(x_1)\\ I(t)\\ U(t)+\varphi(x_3) \end{pmatrix}\!\!,
 \end{equation}
where $I(t)$ and $U(t)$ are a given (input) current and a given voltage,   ${x_1=I_L}$, ${x_3=I_{\varphi}}$ and ${x_2=U_L}$ are unknown currents and an unknown voltage.  The remaining currents and voltages in the circuit are uniquely expressed via the desired and given ones.

Using the formulas \eqref{ProjRes} and, accordingly,  the algorithm given in Section \ref{NumProj}, we compute
 \begin{equation}\label{Ex_1-Proj}
\begin{split}
P_1(t) \!=\!\!\begin{pmatrix} 1 & 0 & 0 \\ -R(t) & 0 & 0 \\ -1 & 0 & 0 \end{pmatrix}\!\!,
P_2(t) \!=\!\!\begin{pmatrix} 0 & 0 & 0 \\ R(t) & 1 & 0 \\ 1 & 0 & 1 \end{pmatrix}\!\!,  \\
Q_1(t)\!=\!\begin{pmatrix} 1 & R(t) & 1 \\ 0 & 0 & 0 \\ 0 & 0 & 0 \end{pmatrix}\!\!,
Q_2(t)\!=\!\!\begin{pmatrix} 0 & -R(t) & -1 \\ 0 & 1 & 0 \\ 0 & 0 & 1 \end{pmatrix}\!\!.
\end{split}
 \end{equation}
Then, the vector $x$ has the projections
$$
x_{p_1}(t)=P_1(t)x=(x_1,-R(t)x_1,-x_1)^T,\quad x_{p_2}(t)=P_2(t)x=(0,{R(t)x_1+x_2},{x_1+x_3})^{\T}.
$$
\paragraph{\textbf{Global solvability and Lagrange stability of the mathematical model \eqref{DAE}, \eqref{NestCoefDAE} of the electrical circuit dynamics.}}
Below, the definitions and theorems given in Section \ref{GlobSolv} are used. Recall that a solution of the initial value problem \eqref{DAE}, \eqref{NestCoefDAE},  \eqref{ini} is global if it exists on $[t_0,\infty)$.

\emph{By Theorem~\ref{Th_GlobSolv} as well as by Theorem~\ref{Th_GlobSolvBInv}, for each initial point $(t_0,x_0)\in [t_+,\infty)\times \R^3$, where $x_0 =(x_{0,1},x_{0,2},x_{0,3})^{\T}$, which satisfies the consistency condition $(t_0,x_0)\in L_{t_+}$, that is,}
$$
x_{0,1}+x_{0,3}=I(t_0),\qquad x_{0,2}-R(t_0)x_{0,3}=U(t_0)+\varphi(x_{0,3}),
$$
\emph{there exists a unique global solution of the DAE \eqref{DAE}, \eqref{NestCoefDAE} with the initial condition \eqref{ini} if the
following conditions hold:}
 \begin{equation}
\begin{split}
& \text{\emph{$L,\, R,\, R_L\!\in\! C^1([t_+,\infty),\R)$,\, $I,\, U\!\in\! C([t_+,\infty),\R)$,\,  $\varphi,\, \varphi_L\!\in\! C^1(\R)$,\, $L(t)\ge L_0>0$ and}}  \\
& \hspace{2cm} \text{\emph{$R(t)\ne 0$\, ($R(t)\!>\!0$ from physical considerations) for all $t\!\in\! [t_+,\infty)$, and}}  \\
& \text{\emph{$\lambda L(t)+R_L(t)+R(t)\ne 0$ for sufficiently large $|\lambda|$ such that $|\lambda|\ge L_0^{-1}$ and all $t\!\in\![t_+,\infty)$;}} \end{split}  \label{GlobSolvPIMM1}
 \end{equation}
 \begin{equation}
\begin{split}
& \text{\emph{there exists a number $R>0$ such that $[\varphi_L(x_1)-\varphi(I(t)-x_1)-R(t)I(t)-U(t)]x_1+$}}  \\
& \text{\emph{$+[L'(t)/2+R_L(t)+R(t)]x_1^2\ge0$  for all $t\!\in\! [t_+,\infty)$, $\|x_{p_1}(t)\|=|x_1|\|(1,-R(t),-1)^T\|\!\ge\! R$.}}
\end{split} \label{GlobSolvPIMM3}
 \end{equation}

  \smallskip
The condition \eqref{GlobSolvPIMM3} can be weakened by using Proposition \ref{St_GlobSolvSrez} presented in Section \ref{GlobSolv}.

 \smallskip
\emph{If the conditions \eqref{GlobSolvPIMM1}, \eqref{GlobSolvPIMM3} hold and $\sup\limits_{t\in [t_+,\infty)}\! |I(t)|<\infty$,  $\sup\limits_{t\in [t_+,\infty)}\! |U(t)|<\infty$, $\sup\limits_{t\in [t_+,\infty)}\! |R(t)|<\infty$, then by Theorem~\ref{Th_UstLagrDAE} the DAE \eqref{DAE}, \eqref{NestCoefDAE} is Lagrange stable}, i.e., for each consistent initial point $(t_0,x_0)$ a global solution of the IVP \eqref{DAE}, \eqref{NestCoefDAE}, \eqref{ini} exists and is bounded.

 \subsubsection{Numerical analysis of the mathematical model}\label{NumAnalPIMM}

Let us seek numerical solutions of the DAE \eqref{DAE}, \eqref{NestCoefDAE} provided that there exist corresponding exact global solutions, i.e., that the global solvability conditions presented in Section \ref{TheorAnalPIMM} are satisfied. The functions used in numerical experiments satisfy the conditions of the proved theorems or propositions on the convergence of the methods. This enables one to compute a numerical solution on any given time interval.

Choose
 \begin{equation} \label{Ex2_param}
\begin{split}
&U(t)=2\sin(2t+\pi),\quad {I(t)=\sin(2t-\pi)},\quad {L(t)=10^{-1}+(t+1)^{-1}}, \\
&{R_L(t)=3+0.5\sin(2t)},\quad {R(t)=1+0.5\sin(2t)},
\end{split}
 \end{equation}
\begin{equation}
\varphi(x_3)=a\, x_3^{2k-1},\quad \varphi_L(x_1)=b\, x_1^{2m-1},  \label{StepenFunc2}
\end{equation}
where $a=b=1$, $k=m=2$.  For the chosen functions the DAE \eqref{DAE}, \eqref{NestCoefDAE} is Lagrange stable since the conditions of the Lagrange stability, specified in Section \ref{TheorAnalPIMM}, hold.  The components of the solution $x(t)=(x_1(t),x_2(t),x_3(t))^\T$ computed (by method 1) for the consistent initial values $t_0=0$, $x_0=(0,0,0)^\T$ are displayed in Fig.~\ref{Ex2_h_001}.
 \begin{figure}[!h]%
 \centering
\includegraphics[width=4.8cm]{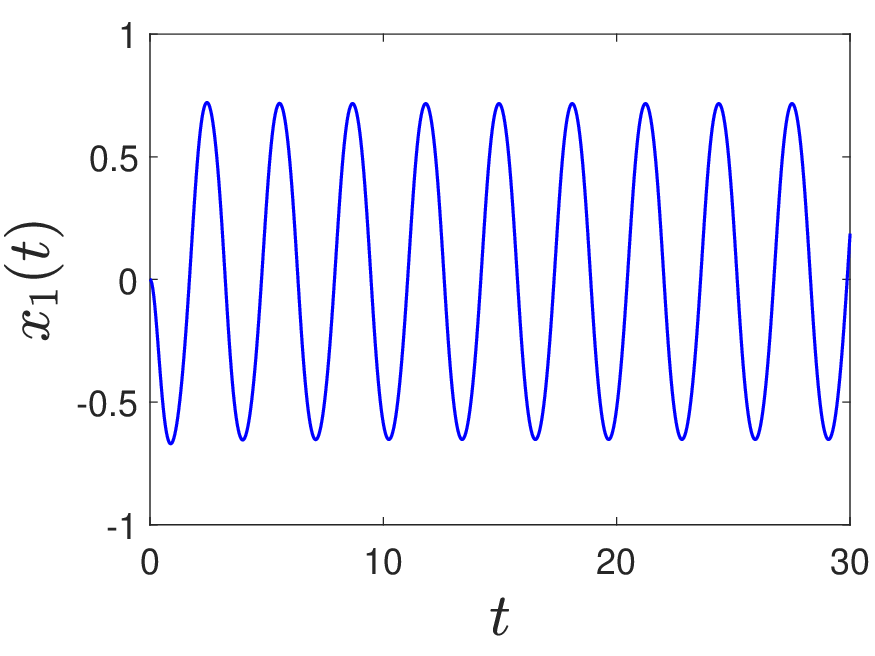}
\includegraphics[width=4.8cm]{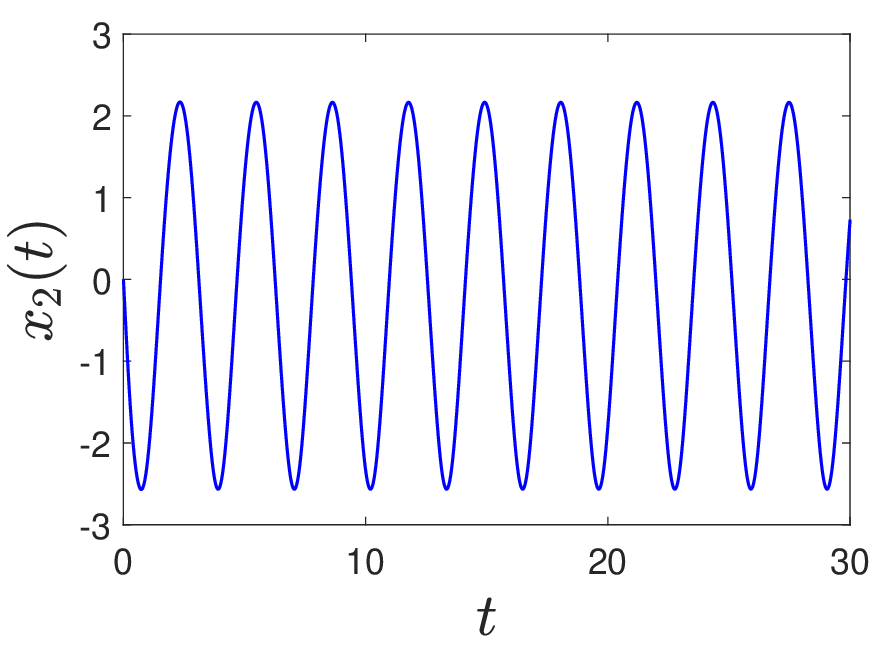}
\includegraphics[width=4.8cm]{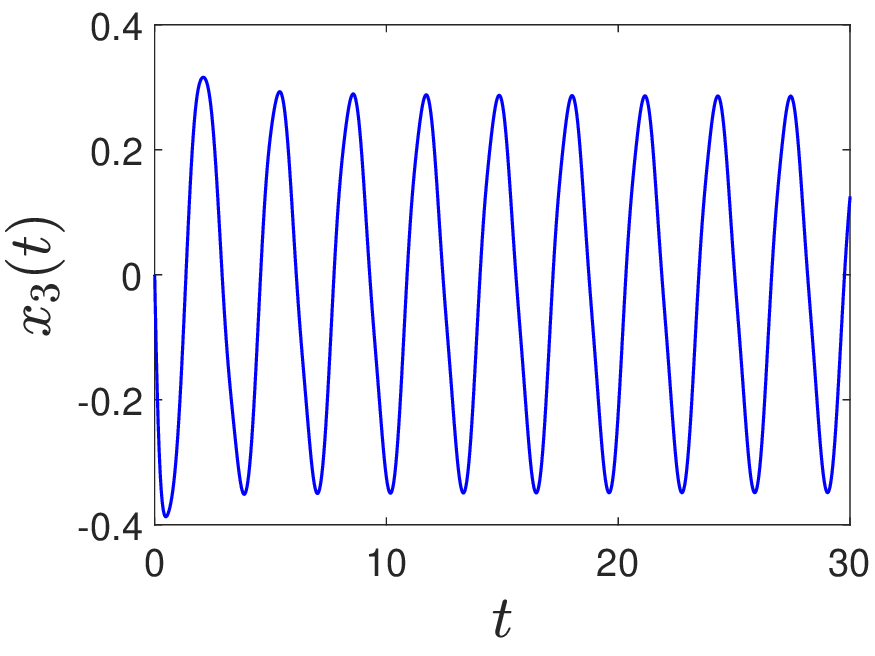}

\vspace*{-3mm}

\caption{The example of a Lagrange-stable solution: The plots of the components $x_1(t)$, $x_2(t)$, $x_3(t)$ of the approximating solution $x(t)=(x_1(t),x_2(t),x_3(t))^\T$ computed for the DAE \eqref{DAE}, \eqref{NestCoefDAE} with the functions \eqref{Ex2_param}, \eqref{StepenFunc2}, where $a=b=1$, $k=m=2$, and the initial values $t_0=0$, $x_0=(0,0,0)^\T$. The analysis of the presented graphs shows that the solution exists on the given interval and its norm does not increase with increasing time. When the interval is increased by a factor of 10 and more, the qualitative picture of the behavior of the numerical solution does not change (therefore, the corresponding graphs were not presented here).
Thus, the results of the numerical experiment are consistent with the conclusion about the Lagrange stability of the DAE, which was obtained using the corresponding theorem.}\label{Ex2_h_001}
 \end{figure}

For the functions
\begin{equation}\label{Ex1_1-3_param}
U(t)=t+1,\quad I(t)=3(t+1)^{-1},\quad L(t)=10^{-1}+(t+1)^{-1},\quad
R_L(t)=e^{-t},\quad R(t)=2+\cos t
\end{equation}
and $\varphi$, $\varphi_L$ of the form \eqref{StepenFunc2} where $a=b=1$ and $k=m=2$, and for the consistent initial values $t_0=0$ and $x_0=(0,37,3)^\T$, the components of the numerical solution are plotted in Fig.~\ref{Ex1_1-3_h_001}.
 \begin{figure}[!h]%
 \centering
\includegraphics[width=4.8cm]{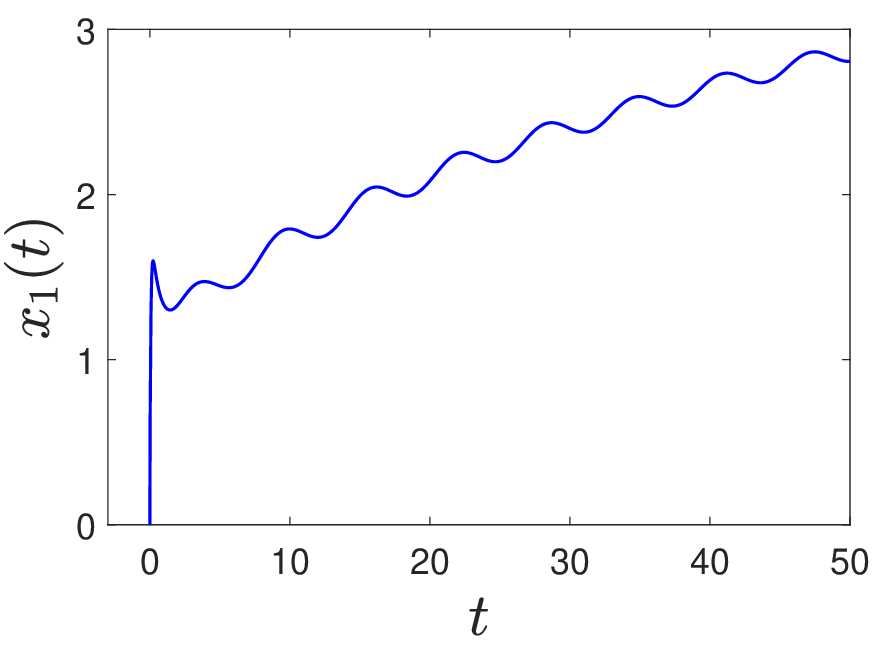}
\includegraphics[width=4.8cm]{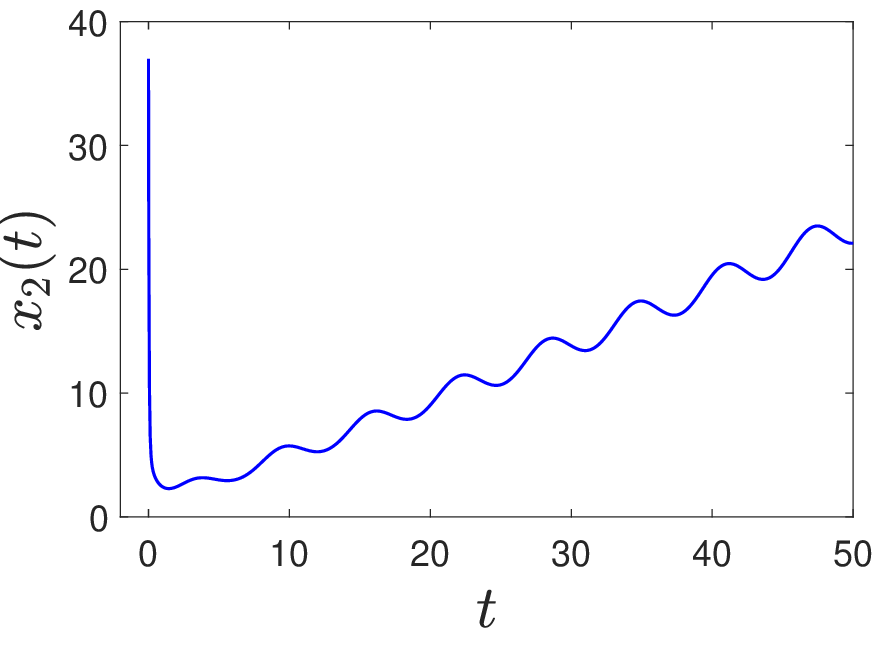}
\includegraphics[width=4.8cm]{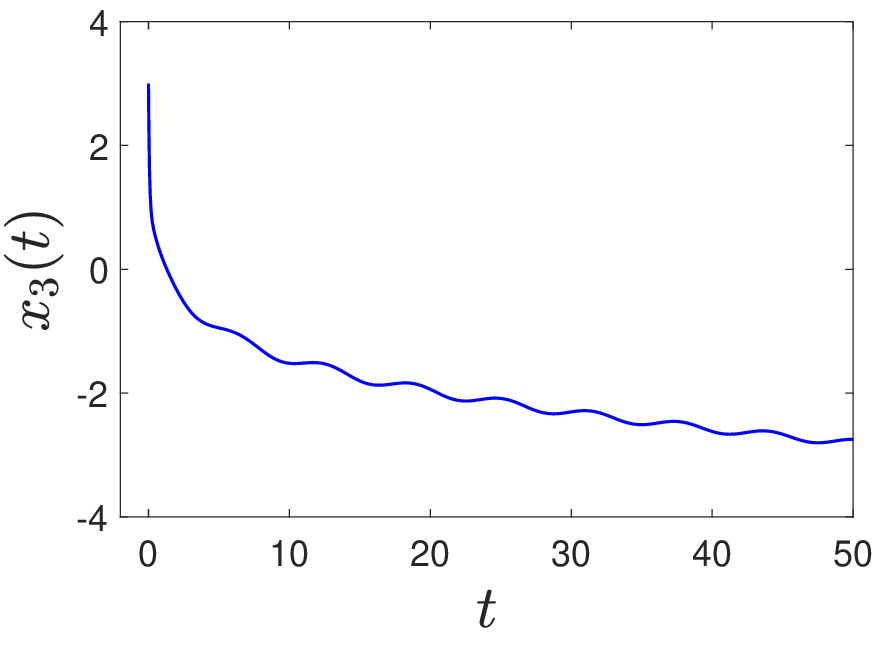}

\vspace*{-3mm}

\caption{The example of a global solution:  The plots of the components $x_1(t)$, $x_2(t)$ and $x_3(t)$ of the approximating solution $x(t)=(x_1(t),x_2(t),x_3(t))^\T$ computed (by method 1) for the DAE \eqref{DAE}, \eqref{NestCoefDAE} with the functions \eqref{Ex1_1-3_param} and \eqref{StepenFunc2}, where $a=b=1$, $k=m=2$, and the initial values $t_0=0$ and $x_0=(0,37,3)^\T$.
In this case, the exact solution is global, but it can be unbounded on $[t_0,\infty)$. This is because the conditions for the global solvability of the DAE \eqref{DAE}, \eqref{NestCoefDAE}, specified in Section \ref{TheorAnalPIMM}, hold, but the additional conditions for the Lagrange stability are not fulfilled.  The presented graphs demonstrate the same behavior pattern of the solution. When the interval is increased by a factor of 10, the qualitative picture of the behavior of the solution does not change.}\label{Ex1_1-3_h_001}
 \end{figure}

Further, consider the case when the function $U(t)$ is continuous, but not differentiable. Let the voltage $U(t)$ have the sawtooth shape (see Fig.~\ref{Upila})
\begin{equation}\label{Upila-func}
U(t)=\!\begin{cases} t-15\,i, & t\in [15\,i, 10+ 15\,i],\quad i\in\{0\}\cup{\mathbb N}, \\ 30(i+1)-2t, & t\in [10+ 15\,i, 15+15\,i],\quad i\in\{0\}\cup{\mathbb N}. \end{cases}
\end{equation}
Also, let
\begin{equation}\label{ExPila_param}
{I(t)=\sin(2t-\pi)},\,\; {L(t)=10^{-1}+(t+1)^{-1}},\,\; {R_L(t)=3+0.5\sin(2t)},\,\;  R(t)=1+0.5\sin(2t)
\end{equation}
 \begin{wrapfigure}[6]{r}{0.3\linewidth}
\includegraphics[width=\linewidth]{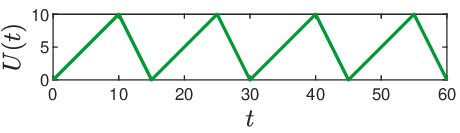}

\vspace*{-3mm}

\caption{The plot of $U(t)$}\label{Upila}
 \end{wrapfigure}
and $\varphi,\,\varphi_L$ have the form \eqref{StepenFunc2} where $a=3$, $b=4$ and ${k=m=2}$. In this case, the DAE \eqref{DAE}, \eqref{NestCoefDAE} is Lagrange stable since the conditions for the Lagrange stability, given in  Section \ref{TheorAnalPIMM}, hold.  The numerical solution for this case was obtained by methods 1 and 2 for the consistent initial values $t_0=0$, ${x_0=(0,0,0)^\T}$. Its components obtained by method 1 are displayed in Fig.~\ref{ExPila_h_001}.  When the calculation interval increases, the qualitative picture of the solution behavior does not change. The analysis of the numerical solution shows that the results of the numerical experiment are consistent with the conclusion about the Lagrange stability of the exact solution.
\begin{figure}[H]%
 \centering
\includegraphics[width=4.8cm]{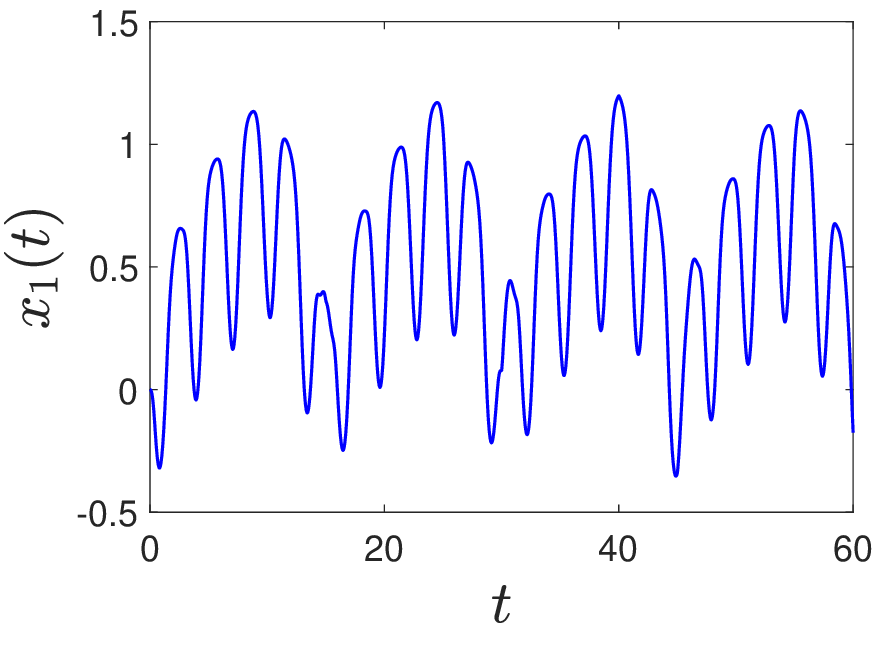}
\includegraphics[width=4.8cm]{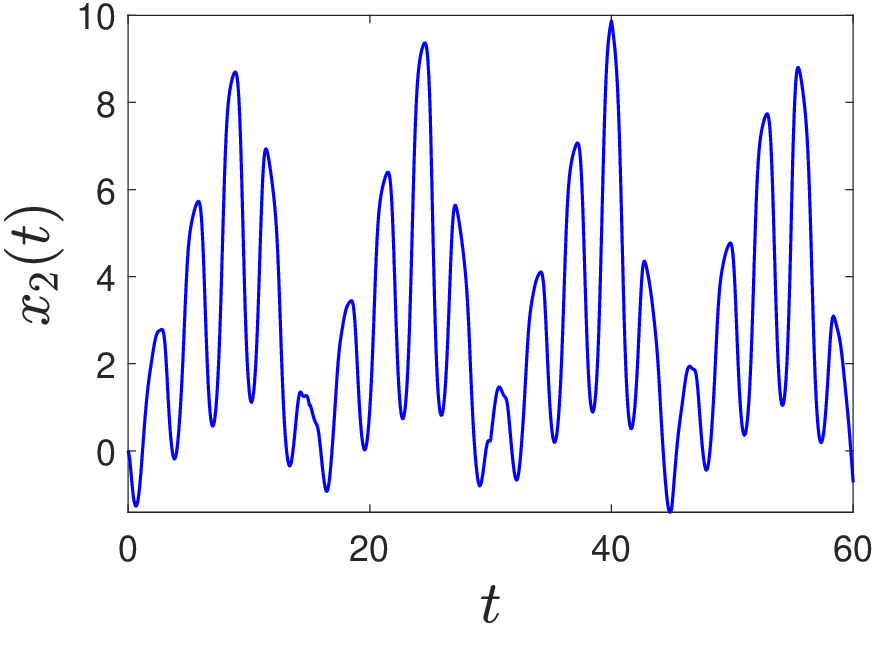}
\includegraphics[width=4.8cm]{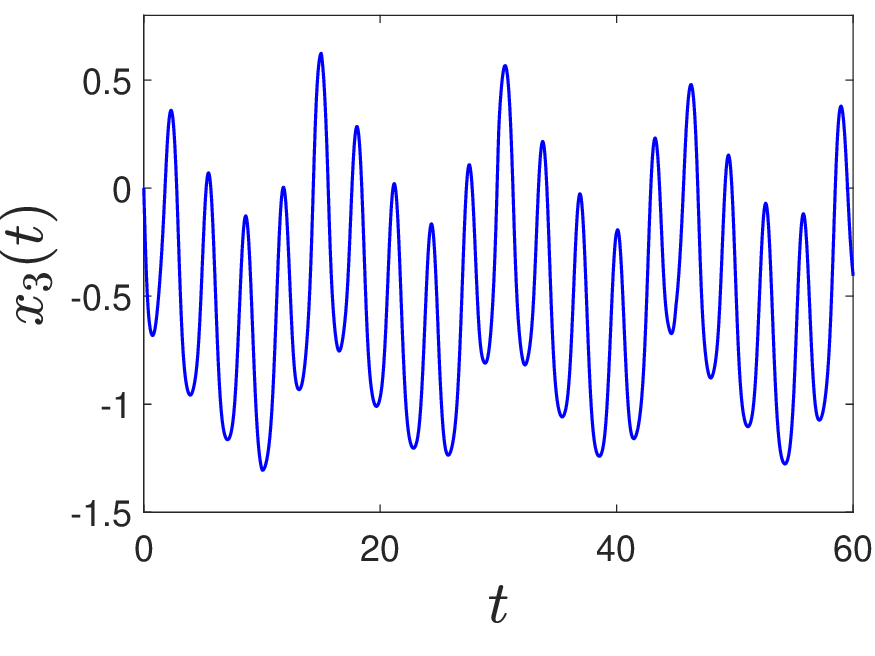}

\vspace*{-3mm}

\caption{The example of a solution for the case when the function $U(t)$ is continuous, but not differentiable: The plots of the components $x_1(t)$, $x_2(t)$, $x_3(t)$ of the approximate solution $x(t)=(x_1(t),x_2(t),x_3(t))^\T$ computed for the DAE \eqref{DAE}, \eqref{NestCoefDAE} with the functions \eqref{Upila-func}, \eqref{ExPila_param} and \eqref{StepenFunc2}, where $a=3$, $b=4$, $k=2$, $m=2$, and the initial values $t_0=0$, $x_0=(0,0,0)^\T$. The theoretical analysis shows that the exact solution is Lagrange stable and the presented plots demonstrate the same behavior pattern of the approximate solution.}\label{ExPila_h_001}
\end{figure}

The analysis of the obtained numerical solutions shows that the results of the numerical experiments are consistent with the results of the theoretical analysis of the DAE \eqref{DAE}, \eqref{NestCoefDAE}.

   \subsection{Example 2: Analysis of a mathematical model of the electrical circuit dynamics}\label{ApplDE}

 \subsubsection{Theoretical analysis of the mathematical model of the electrical circuit dynamics}\label{NestElCirc-TheorAnal}

Consider an electrical circuit whose diagram is given in Fig.~\ref{NestElCirc} (reference directions for currents and voltages across the circuit elements coincide).  The global solvability of the mathematical model \eqref{DAE}, \eqref{NestCoef2DAE} (see below) describing the circuit dynamics has been studied in \cite[Section 5]{Fil.DE-2}.
In the present section, we provide the conditions for the existence, uniqueness and boundedness of a global solution of the IVP \eqref{DAE}, \eqref{NestCoef2DAE}, \eqref{ini}  both in the general case and in the particular cases for which approximate solutions are found using the obtained numerical methods (see Section \ref{NestElCirc-NumAnal}).
\begin{figure}[!h]%
 \centering
   \includegraphics[width=7.6cm]{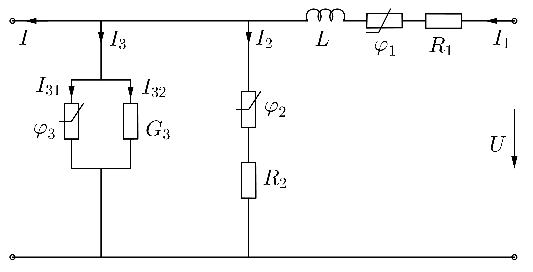}

\vspace*{-2mm}

  \caption{The electric circuit diagram}\label{NestElCirc}
\end{figure}

An inductance $L(t)$, a conductance $G_3(t)$ and resistances $R_1(t)$, $R_2(t)$,  $\varphi_1 (I_1)$,  $\varphi_2 (I_2)$ and $\varphi_3(I_{31})$ are given for the circuit.  Inductance, resistance and conductance are given in henries (H), ohm ($\Omega$) and siemens (S), respectively.

We denote the unknown currents by $x_1(t)=I_1(t)$, $x_2(t)=I_{31}(t)$ and $x_3(t)=I_2(t)$ and in the sequel, for brevity, omit the dependence on $t$ in the notation for $x_j(t)$ ($j=1,2,3$). The mathematical model of the electrical circuit dynamics has the form of the system
\begin{align}
 & \frac{d}{dt}[L(t)x_1]+R_1(t)x_1= U(t)-\varphi_1(x_1)-\varphi_3(x_2), \label{NestElSyst1x} \\
 & x_1-x_2-x_3= I(t)+G_3(t)\varphi_3(x_2),  \label{NestElSyst2x} \\
 & R_2(t)x_3= \varphi_3(x_2)-\varphi_2(x_3), \label{NestElSyst3x}
\end{align}
which describes a transient process in the electrical circuit. The current $I(t)$ and voltage $U(t)$ are given. Having solved the obtained system, we find the currents  $I_1(t)$, $I_{31}(t)$, $I_2(t)$. The remaining currents and voltages in the circuit are uniquely expressed via the desired and given ones. The mathematical model \eqref{NestElSyst1x}--\eqref{NestElSyst3x} can be represented as the DAE \eqref{DAE} where
 \begin{equation}\label{NestCoef2DAE}
x\!=\!\!\begin{pmatrix}\! x_1 \\ x_2 \\ x_3 \!\end{pmatrix}\!\!,
A(t)\!=\!\!\begin{pmatrix}L(t) & 0 & 0 \\ 0 & 0 & 0 \\ 0 & 0 & 0 \end{pmatrix}\!\!,
B(t)\!=\!\!\begin{pmatrix} R_1(t) & 0 & 0\\ 1 & -1 & -1\\ 0 & 0 & R_2(t) \!\end{pmatrix}\!\!,
f(t,x)\!=\!\!\begin{pmatrix} U(t)-\varphi_1(x_1)-\varphi_3(x_2) \\ I(t)+G_3(t)\varphi_3(x_2) \\ \varphi_3(x_2)-\varphi_2(x_3) \!\end{pmatrix}\!\!.
 \end{equation}
We assume that $L,\, R_1,\, R_2\in C^1([t_+,\infty),\R)$, $\varphi_j\in  C^1(\R)$, $j=1,2,3$,\,  $I,\, U,\, G_3\in C([t_+,\infty),\R)$, and ${L(t),\, R_1(t),\, R_2(t),\, G_3(t)>0}$ for all ${t\in [t_+,\infty)}$. Then $A, B\in C^1([t_+,\infty),\mathrm{L}(\R^3))$, $f\in C([t_+,\infty)\times \R^3,\R^3)$, $\partial f/\partial x\in C([t_+,\infty)\times \R^3,\mathrm{L}(\R^3))$, for each $t$ the pencil $\lambda A(t)+B(t)$ is regular, and the condition \eqref{index1}, where ${C_1(t)=\sqrt{2}\big(1+R_2^{-1} (t)\big)+1}$ and ${C_2(t)=L^{-1}(t)\big(1+R_1(t)\big)+1}$, holds for all ${t\in [t_+,\infty)}$.

Using the formulas \eqref{ProjRes}, we obtain the projection matrices $P_j(t)$, $Q_j(t)$ \eqref{Proj.1} (the algorithm for computing the projection matrices \eqref{Proj.1} is given in Section \ref{NumProj}):
$$
P_1(t)\!=\!\begin{pmatrix} 1 & 0 & 0 \\ 1 & 0 & 0 \\ 0 & 0 & 0 \end{pmatrix}\!,\;
P_2(t)\!=\!\begin{pmatrix} 0 & 0 & 0 \\ -1 & 1 & 0 \\ 0 & 0 & 1 \end{pmatrix}\!,\;
Q_1(t)\!=\!\begin{pmatrix} 1 & 0 & 0 \\ 0 & 0 & 0 \\ 0 & 0 & 0 \end{pmatrix}\!,\;
Q_2(t)\!=\!\begin{pmatrix} 0 & 0 & 0 \\ 0 & 1 & 0 \\ 0 & 0 & 1 \end{pmatrix}\!.
$$
Then we obtain the matrix $G(t)$ by \eqref{G(t)}.
The vector $x$ has the projections (components)
$$
x_{p_1}(t)=P_1(t)x=(x_1,x_1,0)^\T=:x_{p_1},\quad  x_{p_2}(t)=P_2(t)x=(0,x_2-x_1,x_3)^\T=:x_{p_2}.
$$
Denote ${z=x_1}$, ${u=x_2-x_1}$, ${w=x_3}$, then
$$
x_{p_1}=(z,z,0)^\T,\quad x_{p_2}=(0,u,w)^\T.
$$

The consistency condition $(t,x)\in L_{t_+}$ (see Remark \ref{RemConsistInis}) holds if $t$, $x_i$, $i=1,2,3$, satisfy the algebraic equations \eqref{NestElSyst2x}, \eqref{NestElSyst3x}.  Using the above notation, we can rewrite the system \eqref{NestElSyst2x},~\eqref{NestElSyst3x} as
 \begin{align*}
& u=-I(t)-G_3(t)\,\varphi_3(u+z)-w, \\
& w=R_2^{-1}(t)\,[\varphi_3(u+z)-\varphi_2(w)]
 \end{align*}
(see \cite[p.~205]{Fil.DE-2}) and transform it to the form
\begin{align}
 & w=-I(t)-u-G_3(t)\, \varphi_3(u+z), \label{NestElSyst2*} \\
 & u=\psi(t,z,u),\qquad  \text{where} \label{NestElSyst3*}
\end{align}
\begin{equation}\label{Nest_psi}
\psi(t,z,u)=-I(t)-\left(G_3(t)+R_2^{-1}(t)\right) \varphi_3(u+z) +R_2^{-1}(t)\, \varphi_2\big(-I(t)-u-G_3(t)\, \varphi_3(u+z)\big).
\end{equation}

Below, the theorems and propositions from Section \ref{GlobSolv} are used. The derivation of the constraints on the functions in the DAE \eqref{DAE}, \eqref{NestCoef2DAE}, under which the conditions of Theorems~\ref{Th_GlobSolv} and \ref{Th_GlobSolvBInv} are satisfied, is described in detail in \cite[Section 5]{Fil.DE-2}. The below conditions for the existence and uniqueness of a global solution of the IVP \eqref{DAE}, \eqref{NestCoef2DAE}, \eqref{ini} were obtained based on these results.

 \smallskip
\paragraph{\textbf{Global solvability of the mathematical model \eqref{DAE}, \eqref{NestCoef2DAE}.}}\;
\emph{By Theorem~\ref{Th_GlobSolv}, for each initial point $(t_0,x_0)\in [t_+,\infty)\times \R^3$, where $x_0 =(x_{0,1},x_{0,2},x_{0,3})^\T$, for which equalities  \eqref{NestElSyst2x}, \eqref{NestElSyst3x} hold (i.e., the consistency condition $(t_0,x_0)\in L_{t_+}$ holds), there exists a unique global solution of the DAE \eqref{DAE}, \eqref{NestCoef2DAE} satisfying the initial condition \eqref{ini}\, if the following conditions are fulfilled:}
 \begin{align}
\begin{split}
& \text{\emph{$L,\, R_1,\, R_2\in C^1([t_+,\infty),\R)$,\; $I,\, U,\, G_3\in C([t_+,\infty),\R)$,\; $\varphi_j\in C^1(\R)$, $j=1,2,3$, and}}  \\
 & \hspace{5.5cm} \text{\emph{${L(t),\, R_1(t),\, R_2(t),\, G_3(t)>0}$ for all $t\in [t_+,\infty)$;}}
 \end{split}   \label{GlobSolvNestEl1}\\  \smallskip
& \text{\emph{for each $t\!\in\! [t_+,\infty)$ and each $z\!\in\! \R$ there exists a unique $u\in\R$ such that \eqref{NestElSyst3*} holds;}} \label{NestElCond1}\\ \smallskip
\begin{split}
& \text{\emph{for each $t_*\in [t_+,\infty)$ and each $z_*,\, u_*,\, w_*\in\R$ which satisfy \eqref{NestElSyst2*}, \eqref{NestElSyst3*}, the relation}}    \\
 & \hspace{2.8cm} \text{\emph{$\varphi'_3(u_*+z_*)+ \big[\varphi'_2(w_*)+R_2(t_*)\big]\big[1+G_3(t_*)\, \varphi'_3(u_*+z_*)\big]\ne 0$\, holds;}}
 \end{split}   \label{NestElCond2} \\ \smallskip
\begin{split}
& \text{\emph{there exists a number $R>0$ such that\;    $-\big(\varphi_1(z)+\varphi_3(u+z)\big)z\le R_1(t)z^2$}}   \\
 & \hspace{4cm} \text{\emph{for any $t\in [t_+,\infty)$, $z,\, u\in\R$ satisfying \eqref{NestElSyst3*} and $|z|\ge R$.}}
 \end{split}   \label{GlobSolvNestEl3}
 \end{align}

\emph{By Theorem~\ref{Th_GlobSolvBInv}, a similar statement holds if the above conditions are satisfied with the following changes: the condition \eqref{NestElCond1} does not contain the requirement that $u$ be unique;\;  the condition \eqref{NestElCond2} is replaced by the following:}
 \begin{equation}
\begin{split}
& \text{\emph{for each $t_*\in [t_+,\infty)$, $z_*\in\R$ and each $u_*^j,\, w_*^j\in\R$, $j=1,2$, satisfying \eqref{NestElSyst2*}, \eqref{NestElSyst3*}}} \\
 & \hspace{1cm} \text{\emph{the relation $\varphi'_3(u_2+z_*)+ \big[\varphi'_2(w_2)+R_2(t_*)\big]\big[1+G_3(t_*)\, \varphi'_3(u_1+z_*)\big]\ne 0$ holds}}     \\
& \hspace{6.3cm} \text{\emph{for any $u_k\in [u_*^1,u_*^2]$, $w_k\in [w_*^1,w_*^2]$, $k=1,2$}}
\end{split}   \label{NestElCond2:2}
 \end{equation}
(obviously, this condition is satisfied in the case if the relation present in it holds for each $t_*\in [t_+,\infty)$, each $z_*\in\R$ and each $u_k,\, w_k\in\R$, $k=1,2$).

\emph{The global solvability conditions mentioned above can be weakened by using Proposition} \ref{St_GlobSolvSrez}.

Below, examples of the functions that satisfy the presented conditions are considered and certain changes of these conditions are discussed.

The conditions \eqref{NestElCond1}, \eqref{NestElCond2}, as well as  \eqref{NestElCond2:2}, hold if the functions $\varphi_2$, $\varphi_3$ are increasing (nondecreasing) on~$\R$, for example:
 \begin{equation}\label{PowerFunct}
\varphi_2(y)=a\, y^{2k-1},\, \varphi_3(y)=b\, y^{2m-1}\, \text{ or }\, \varphi_2(y)=a\, y^{\frac{1}{2k-1}},\,  \varphi_3(y)=b\, y^{\frac{1}{2m-1}},\,  a, b>0,\, k, m\!\in\!\mathbb N,
 \end{equation}
or if they have the form \eqref{Sin} and inequality \eqref{SinCond1} is satisfied:
 \begin{gather}
\varphi_2(y)=a\sin y,\; \varphi_3(y)=b\sin y\quad \text{ or }\quad \varphi_2(y)=a\cos y,\; \varphi_3(y)=b\cos y,\quad  a,\, b\in \R, \label{Sin} \\
G_3(t)|b|+R_2^{-1}(t)(|a|+|b|+G_3(t)|a|\, |b|)<1,\quad t\in [t_+,\infty). \label{SinCond1}
 \end{gather}

Note that if $\varphi_2$, $\varphi_3$ have the form \eqref{PowerFunct}, then the mapping $\psi(t,z,u)$ \eqref{Nest_psi} is not globally contractive with respect to $u$ (in general, it does not satisfy the global Lipschitz condition in $u$ and $z$) for $k,m\ge 2$ and any $G_3(t)$, $R_2(t)$, $a$ and $b$, and hence the condition \eqref{GlobContr} (see Section \ref{GlobSolv}) is not fulfilled. Obviously, if $\psi(t,z,u)$ is globally contractive with respect to $u$ for any $t$,~$z$, i.e., there exists a constant $\alpha<1$ such that
\begin{equation}\label{ContrPsi}
\big|\psi(t,z,u_1)-\psi(t,z,u_2)\big|\le\alpha |u_1-u_2|,\;\; u_1,\, u_2\in\R,
\end{equation}
for each $t\in [t_+,\infty)$ and each $z\in\R$, then the condition \eqref{NestElCond1} holds.

If we take into account that $t_*$, $z_*$, $u_*$, $w_*$ satisfy  \eqref{NestElSyst2*}, i.e., $w_*=-I(t_*)-u_*-G_3(t_*) \varphi_3(u_*+z_*)$, but  disregard equality \eqref{NestElSyst3*}, then the condition \eqref{NestElCond2}  takes the following form:
  \par{\centering
 for each $t_*\in [t_+,\infty)$ and each $z_*,\, u_*\in\R$ the relation $\dfrac{\partial\psi}{\partial u}(t_*,z_*,u_*)\ne -1$ holds. \par\smallskip}

The conditions \eqref{NestElCond1}, \eqref{NestElCond2} ensure the fulfillment of conditions \ref{SoglRN1}, \ref{InvRN1} of Theorem~\ref{Th_GlobSolv};\, \eqref{NestElCond1} without the requirement for $u$ to be unique and \eqref{NestElCond2:2} ensure the fulfillment of conditions \ref{SoglRN2},~\ref{BasInvRN1} of Theorem~\ref{Th_GlobSolvBInv}.
Instead of conditions \ref{SoglRN1}, \ref{InvRN1} of Theorem~\ref{Th_GlobSolv} or Theorem~\ref{Th_GlobSolvBInv} one can use the condition \eqref{GlobContr} of Proposition~\ref{Th_GlobSolvContr} which is satisfied if there exists a constant $\alpha<1$  such that
 \begin{equation}
 \begin{split}
G_3(t)\, \big|\varphi_3(u_1+z)-\varphi_3(u_2+z)\big|+ R_2^{-1}(t)\big|\varphi_3(u_1+z)-\varphi_3(u_2+z)- \varphi_2(w_1)+\varphi_2(w_2)\big|  & \\
 \le \alpha \sqrt{|u_1-u_2|^2+|w_1-w_2|^2} &
 \end{split}   \label{GlobContr1}
 \end{equation}
for any $t\in [t_+,\infty)$, $z\in \R$ and $u_i,\, w_i\in\R$, $i=1,2$, that is, the nonlinear function in the ``algebraic part'' of the DAE is a globally contractive with respect to $x_{p_2}$ for any $t$, $x_{p_1}$.  However, this condition is more restrictive.
If we take into account that the graph of a solution $x(t)$ must lie in the manifold $L_{t_+}$ and, therefore, $t$, $z$, $u$, $w$ are related by equalities \eqref{NestElSyst2*}, \eqref{NestElSyst3*}, then, using these equalities, we can transform inequality  \eqref{GlobContr1} so that it will be similar to \eqref{ContrPsi}.

To derive the condition \eqref{GlobSolvNestEl3}, the function $V(t,x_{p_1}(t))$ of the form \eqref{funcV} with a time-invariant operator $H$, i.e., $V(t,x_{p_1}(t))\equiv \big(H x_{p_1}(t),x_{p_1}(t)\big)$, where $H=0.5\, I_{\R^3}$, was chosen.
Then $V'_{\eqref{DAEsys2.1}}(t,x_{p_1}(t))$ has the form \eqref{VderivDAE} where $H(t)\equiv H$, and  condition~\ref{ExtensRN1} of Theorem~\ref{Th_GlobSolv} (the same condition is present in Theorem \ref{Th_GlobSolvBInv}) is satisfied if there exist functions ${\tilde{U}\in C(0,\infty)}$, ${k\in C([t_+,\infty),\R)}$ such that ${\int\limits_{{\textstyle v}_0}^{\infty}\big(\tilde{U}(v)\big)^{-1} dv =\infty}$ (${v_0>0}$) and for some ${R>0}$ the inequality
 \begin{equation}\label{NestElCond3}
2 L^{-1}(t) \big[-(L'(t)+R_1(t))z^2+U(t)z- \big(\varphi_1(z)+\varphi_3(u+z)\big)z\big]\le k(t)\, \tilde{U}(z^2)
 \end{equation}
holds for all $t\in [t_+,\infty)$, $z,u\in \R$ satisfying \eqref{NestElSyst3*} and $|z|\!\ge\! R$. It is readily verified that \eqref{NestElCond3}, where
$$
k(t)= 2L^{-1}(t)\big(|L'(t)|+|U(t)|\big),\qquad  {\tilde{U}(v)=v},
$$
is satisfied if \eqref{GlobSolvNestEl3} holds. The specified functions $k(t)$, $\tilde{U}(v)$ are also used to obtain conditions for the Lagrange stability of the DAE \eqref{DAE}, \eqref{NestCoef2DAE}.

 \smallskip
\paragraph{\textbf{Global solvability of the mathematical model \eqref{DAE}, \eqref{NestCoef2DAE} in some particular cases.}}\quad

 \smallskip
\textbf{I.}  Consider the functions
 \begin{equation}\label{PowerFunct_1-2-3}
\varphi_1(y)=c\, y^{2l-1},\quad  \varphi_2(y)=a\, y^{2k-1},\quad \varphi_3(y)=b\, y^{2m-1},\quad  a,\, b,\, c>0,\quad  k,\, m,\, l\in\mathbb N,
 \end{equation}
where $\varphi_2$, $\varphi_3$ from \eqref{PowerFunct}.\,  The functions  \eqref{PowerFunct_1-2-3} satisfy \eqref{GlobSolvNestEl3} if
$$
\sup\limits_{t\in [t_+,\infty)}\! |I(t)|<\infty,\quad \inf\limits_{t\in [t_+,\infty)} R_2(t)=K_0>0 \quad \text{($K_0$ is some constant) and ${m\le l}$}.
$$

Thus, \emph{if $\varphi_j$, ${j=1,2,3}$, have the form \eqref{PowerFunct_1-2-3}, where ${m\le l}$, and, in addition, ${L, R_1, R_2\in C^1([t_+,\infty),\R)}$,\, ${I, U, G_3\in C([t_+,\infty),\R)}$,\, ${L(t), R_1(t), G_3(t)>0}$ for ${t\in [t_+,\infty)}$, ${\sup\limits_{t\in [t_+,\infty)} |I(t)|<\infty}$ and ${\inf\limits_{t\in [t_+,\infty)} R_2(t)=K_0>0}$, then for each initial point ${(t_0,x_0)\in [t_+,\infty)\times \R^3}$ satisfying \eqref{NestElSyst2x}, \eqref{NestElSyst3x} there exists a unique global solution of the IVP \eqref{DAE}, \eqref{NestCoef2DAE}, \eqref{ini}.}

  \smallskip
   \textbf{II.}  Now consider the functions
 \begin{equation}\label{Sin_1-2-3}
\varphi_1(y)=c\sin y,\quad  \varphi_2(y)=a\sin y,\quad \varphi_3(y)=b\sin y,\quad  a,\, b,\, c \in\R,
 \end{equation}
where $\varphi_2$, $\varphi_3$ from \eqref{Sin} and we can replace sines by cosines in \eqref{Sin_1-2-3}.\,  For the functions \eqref{Sin_1-2-3} the condition \eqref{GlobSolvNestEl3} holds if  ${\inf\limits_{t\in [t_+,\infty)} R_1(t)=R_*>0}$. Notice that for the functions \eqref{Sin} condition~\ref{SoglRN2} of Theorem \ref{Th_GlobSolvBInv} is always satisfied.

Thus, \emph{if $\varphi_j$, $j=1,2,3$, have the form \eqref{Sin_1-2-3}, and, in addition, $L, R_1, R_2\in C^1([t_+,\infty),\R)$,\, $I, U, G_3\in C([t_+,\infty),\R)$,\, $L(t), R_2(t), G_3(t)>0$ for $t\in [t_+,\infty)$,  the functions $\varphi_2$, $\varphi_3$, $G_3$, $R_2$ satisfy the condition \eqref{SinCond1}, and $\inf\limits_{t\in [t_+,\infty)} R_1(t)=R_*>0$, then for each initial point $(t_0,x_0)\!\in\! [t_+,\infty)\!\times\! \R^3$  satisfying \eqref{NestElSyst2x}, \eqref{NestElSyst3x} there exists a unique global solution of the IVP}  \eqref{DAE}, \eqref{NestCoef2DAE}, \eqref{ini}.

  \smallskip
\paragraph{\textbf{Lagrange stability of the mathematical model \eqref{DAE}, \eqref{NestCoef2DAE}.}}\, \emph{By Theorem~\ref{Th_UstLagrDAE}, the DAE \eqref{DAE},~\eqref{NestCoef2DAE} is Lagrange stable if the above conditions \eqref{GlobSolvNestEl1}--\eqref{GlobSolvNestEl3} are fulfilled and in addition ${\int\limits_{t_+}^{\infty}L^{-1}(t)\big(|L'(t)|+|U(t)|\big)dt<\infty}$}
(this integral converges if ${\int\limits_{t_+}^{\infty} L^{-1}(t)|U(t)|dt<\infty}$ and ${\lim\limits_{t\to \infty}L(t) =\tilde{L} <\infty}$, ${\tilde{L}\ne 0}$) \emph{and condition \ref{LagrA1}, or \ref{LagrA1.2}, or \ref{LagrA1.3} from Theorem \ref{Th_UstLagrDAE} holds.}
Notice that condition \ref{LagrA1} of Theorem~\ref{Th_UstLagrDAE}  is a consequence of condition \ref{LagrA1.2}.

Condition \ref{LagrA1} of Theorem \ref{Th_UstLagrDAE}, as well as condition \ref{LagrA1.2}, holds if
$$
[I(t)+G_3(t)\varphi_3(x_2)+R_2^{-1}(t)(\varphi_3(x_2)-\varphi_2(x_3))]^2+  [R_2^{-1}(t)(\varphi_3(x_2)-\varphi_2(x_3))]^2\le K_{M_*}=\mathrm{const}
$$
for all $t\!\in\! [t_+,\infty)$, $x_2,\,x_3\!\in\!\R$ and $|x_1|\le M_*$ (where $M_*$ is an arbitrary constant) satisfying equalities \eqref{NestElSyst2x}, \eqref{NestElSyst3x}. These conditions are fulfilled, for example, if
$$
{\sup\limits_{t\in[t_+,\infty)}\! |I(t)|\!<\!\infty},\; \sup\limits_{t\in[t_+,\infty)}\! G_3(t)\!<\!\infty,\;  \sup\limits_{t\in[t_+,\infty)}\! R_2^{-1}(t)\!<\!\infty,\;
\sup\limits_{x_2\in\R}\! |\varphi_3(x_2)|\!<\!\infty,\;
\sup\limits_{x_3\in\R}\! |\varphi_2(x_3)|\!<\!\infty.
$$

Choose $\Tilde{x}_{p_2}(t_*)=\Tilde{x}_{p_2}= (0,\tilde{x}_2-x_1^*,\tilde{x}_3)^{\T}=(0,\tilde{u},\tilde{w})^{\T}=0$. Then it is easily verified that condition \ref{LagrA1.3} of Theorem \ref{Th_UstLagrDAE} is satisfied if, for example, the following conditions are satisfied:
\begin{itemize}
\leftmargin=0pt
\item for each ${t_*\in\! [t_+,\infty)}$ and each ${z_*,\, u_*,\, w_*\in\!\R}$ satisfying \eqref{NestElSyst2*}, \eqref{NestElSyst3*} and for any ${\lambda_1,\lambda_2\in\! (0,1]}$ the following relation holds:
    $${\varphi'_3(\lambda_2 u_*+z_*)+ \big[\varphi'_2(\lambda_2 w_*)+ R_2(t_*)\big]\big[1+G_3(t_*)\, \varphi'_3(\lambda_1 u_* +z_*)\big]\ne 0}
    $$
    (i.e., the relation from the condition \eqref{NestElCond2:2}, where ${u_i=\lambda_i u_*}$, ${i=1,2}$, and ${w_2=\lambda_2 w_*}$, holds);
\item for all ${t_*\in [t_+,\infty)}$, ${z_*, u_*, w_*\in \R}$ satisfying \eqref{NestElSyst2*}, \eqref{NestElSyst3*} it holds that ${|I(t_*)|<\infty}$,  ${G_3(t_*)<\infty}$, ${R_2^{-1}(t_*)<\infty}$, ${|\varphi_2(w_*)|<\infty}$ and ${|\varphi_3(u_*+z_*)|\le K_1(z_*)<\infty}$, where ${K_1(z_*)=K_1^*}$ is some constant for each fixed $z_*$.
\end{itemize}

\subsubsection{Numerical analysis of the mathematical model of the electrical circuit dynamics}\label{NestElCirc-NumAnal}

In this section, we present the plots of numerical solutions of the DAE \eqref{DAE}, \eqref{NestCoef2DAE} describing the electrical circuit dynamics (see Section \ref{NestElCirc-TheorAnal}) for such parameters of the electric circuit (i.e., the functions $I(t)$, $U(t)$, $G_3(t)$, $L(t)$, $R_1(t)$, $R_2(t)$, $\varphi_1(x_1)$, $\varphi_2(x_3)$ and $\varphi_3(x_2)$\,) for which there exists a unique global solution of the IVP \eqref{DAE}, \eqref{NestCoef2DAE}, \eqref{ini}, as well as the conditions of Theorems \ref{ThNum-meth}, \ref{ThNum-meth2} or Propositions~\ref{remNum-meth},~\ref{remModNum-meth} on the convergence of the methods hold.

Consider the case when $\varphi_i$, $i=1,2,3$, have the form \eqref{PowerFunct_1-2-3}, where $k=m=l=2$, i.e.,
 \begin{equation}\label{StepenFunc}
\varphi_1(y)=c\, y^3,\quad \varphi_2(y)=a\, y^3,\quad  \varphi_3(y)=b\, y^3,\quad a,b,c>0,\quad y\in\R.
 \end{equation}
Let $L,\, R_1,\, R_2\in C^1([t_+,\infty),\R)$,\, $I,\, U,\, G_3\in C([t_+,\infty),\R)$,\, $L(t),\, R_1(t),\, G_3(t)>0$ for all $t\!\in\! [t_+,\infty)$,\, ${\sup\limits_{t\in[t_+,\infty)} |I(t)|<\infty}$ and $\inf\limits_{t\in[t_+,\infty)} R_2(t)=K_0>0$. Then, as shown in Section \ref{NestElCirc-TheorAnal}, for each initial point $(t_0,x_0)\!\in\! [t_+,\infty)\times \R^3$ satisfying equalities \eqref{NestElSyst2x}, \eqref{NestElSyst3x} there exists a unique global solution of the IVP for the DAE \eqref{DAE}, \eqref{NestCoef2DAE} with the functions \eqref{StepenFunc} and the initial condition \eqref{ini}.

Note that equalities \eqref{NestElSyst2x}, \eqref{NestElSyst3x} can be transformed into the following form:
 \begin{gather}
x_3=x_1-x_2-I(t)-G_3(t)\, \varphi_3(x_2),  \label{x_3} \\
x_2=x_1-I(t)-\left(G_3(t)+R_2^{-1}(t)\right) \varphi_3(x_2) +R_2^{-1}(t)\, \varphi_2\big(x_1-x_2-I(t)-G_3(t)\, \varphi_3(x_2)\big)  \label{x_2}
 \end{gather}
(recall that if $(t,x)$ satisfy \eqref{NestElSyst2x}, \eqref{NestElSyst3x}, then $(t,x)\in L_{t_+}$),  and the condition \eqref{NestElCond1} can be rewritten as follows: for each $t\in [t_+,\infty)$ and each $x_1\in\R$ there exists a unique $x_2\in\R$ such that \eqref{x_2} holds.  Consequently, by setting arbitrary initial values $t_0\in [t_+,\infty)$ and $x_{0,1}\in\R$, one can always find a unique $x_{0,2}$ by the formula \eqref{x_2} and then find a unique $x_{0,3}$ by the formula \eqref{x_3} such that the initial point $(t_0,x_0)$, where $x_0=(x_{0,1},x_{0,2},x_{0,3})^{\T}$, will be consistent.\,  For example, in the particular case when $\varphi_i$ have the form \eqref{StepenFunc}, if $t_+=t_0=0$, $x_{0,1}=0$ and $I(t)$ is such that $I(0)=0$, then $t_0=0$, $x_0=(0,0,0)^{\T}$ are consistent initial values.

Recall that the components of a solution $x(t)=(x_1(t),x_2(t),x_3(t))^{\T}$ denote the functions of the currents, namely,
$$
x_1(t)=I_1(t),\quad  x_2(t)=I_{31}(t),\quad  x_3(t)=I_2(t).
$$

Consider the case when
 \begin{equation}\label{Example8_param}
\begin{split}
& I(t)=(t+1)^{-1}-1,\quad U(t)=t+1,\quad G_3(t)=(t+1)^2,\quad L(t)=500\,(t+1)^{-1},   \\
& R_1(t)=1+(t+1)^{-1},\quad  R_2(t)=t(t+1)^{-1}
\end{split}
 \end{equation}
and $\varphi_i$, $i=1,2,3$, have the form \eqref{StepenFunc} where $a=b=c=1$, and take the consistent initial values $t_0=0$, $x_0=(0,0,0)^\T$.  As mentioned above, an exact solution of the DAE is global, i.e., exists on $[t_0,\infty)$, in all cases considered in this section. However, in this case, the solution can be unbounded on $[t_0,\infty)$, since the conditions for the Lagrange stability, specified in Section \ref{NestElCirc-TheorAnal}, are not fulfilled.
The components of the numerical solution (obtained by method 2) are plotted in Fig.~\ref{Example8_1-3}.
 \begin{figure}[!h]%
 \centering
\includegraphics[width=4.8cm]{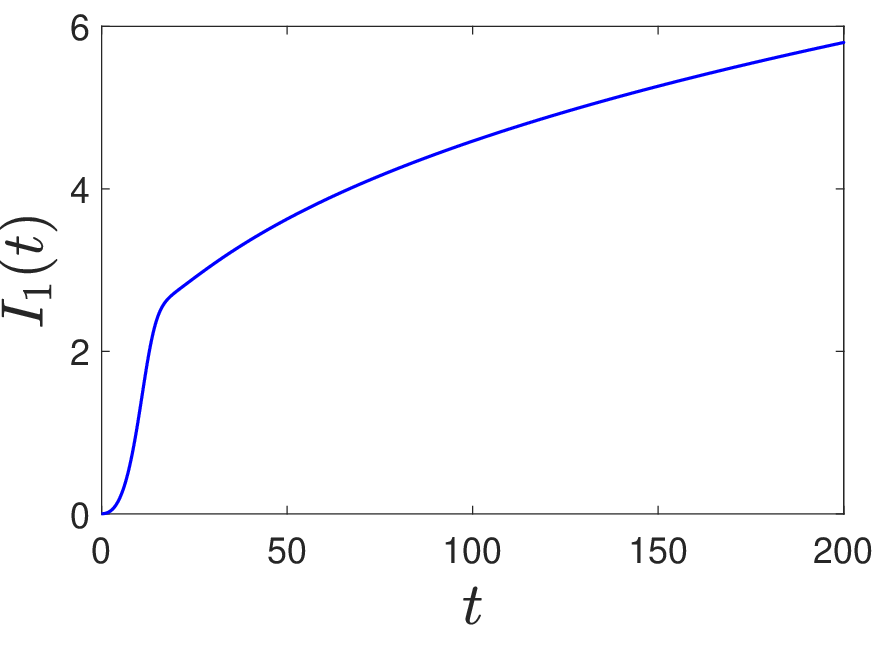}
\includegraphics[width=4.8cm]{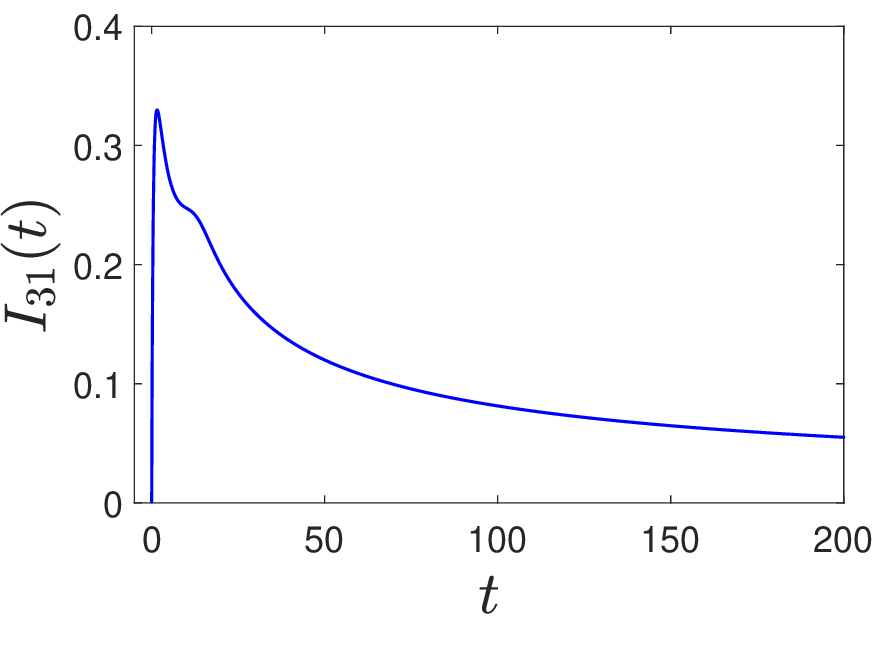}
\includegraphics[width=4.8cm]{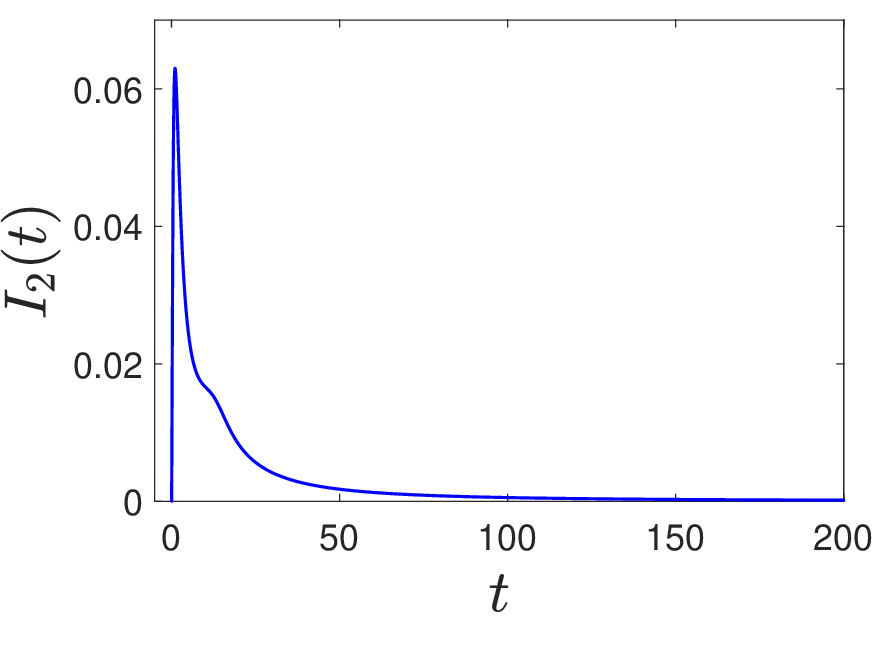}

\vspace*{-3mm}

\caption{The example of a global solution:  The plots of the components ${x_1(t)=I_1(t)}$, ${x_2(t)=I_{31}(t)}$ and ${x_3(t)=I_2(t)}$ of the numerical solution of the DAE \eqref{DAE}, \eqref{NestCoef2DAE} with the functions \eqref{StepenFunc}, where $a=b=c=1$, and \eqref{Example8_param}, and with the initial values $t_0=0$, $x_0=(0,0,0)^\T$.  
The presented graphs demonstrate that the qualitative behavior of the numerical solution is consistent with the theoretical conclusion about the existence of the global exact solution which, however, can be unbounded on $[0,\infty)$.}
\label{Example8_1-3}
 \end{figure}

In realistic problems of electrical engineering the inductance $L(t)$ can be very small, therefore, we take ${L(t)=10^{-3}}$. Choose the remaining parameters of the circuit in the form \eqref{StepenFunc}, where $a=b=c=1$, and
\begin{equation}\label{Example5_param}
R_1(t)=e^{-t},\quad R_2(t)=5+e^{-t},\quad I(t)=\sin t,\quad U(t)=(t+1)^{-1},\quad G_3(t)=(t+1)^{-1}.
\end{equation}
Take the consistent initial values  $t_0=0$, $x_0=(0,0,0)^\T$. As proved above, the exact solution is global.
The components of the computed (by method 2) solution are plotted in Fig. \ref{Example5_1-3_2h}.
 \begin{figure}[!h]%
 \centering
\vspace*{-1mm}
\includegraphics[width=4.6cm]{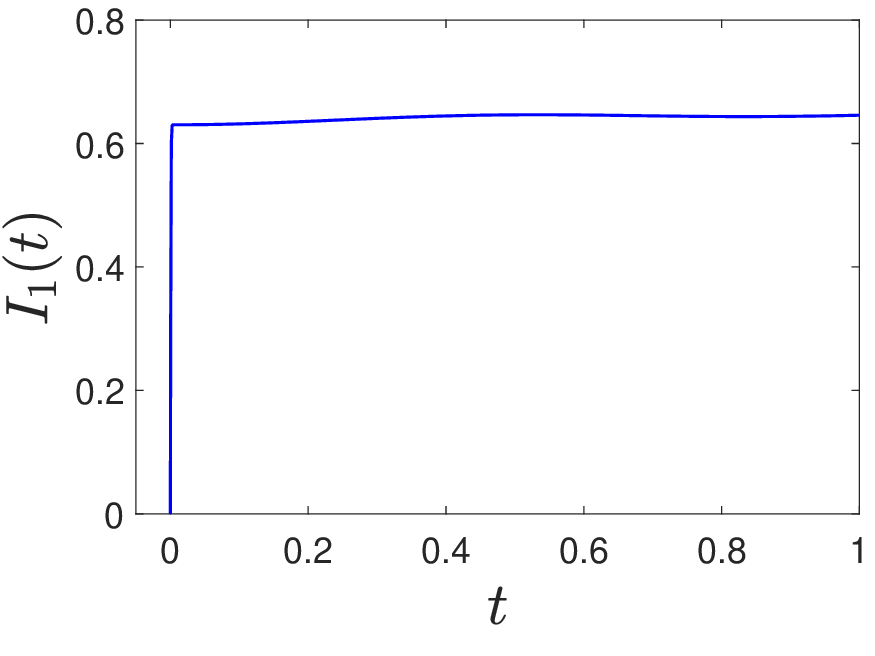}
\includegraphics[width=4.6cm]{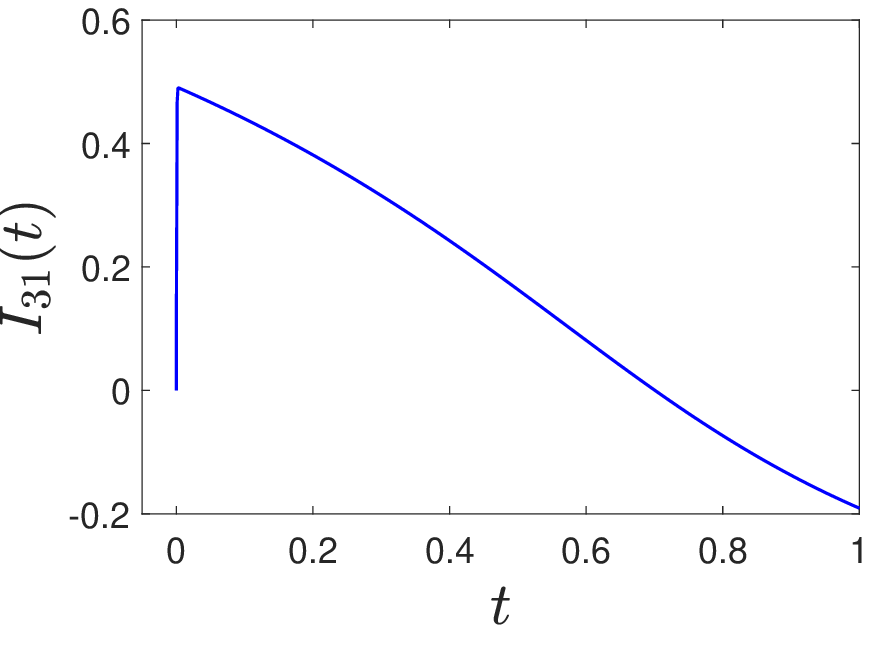}
\includegraphics[width=4.7cm]{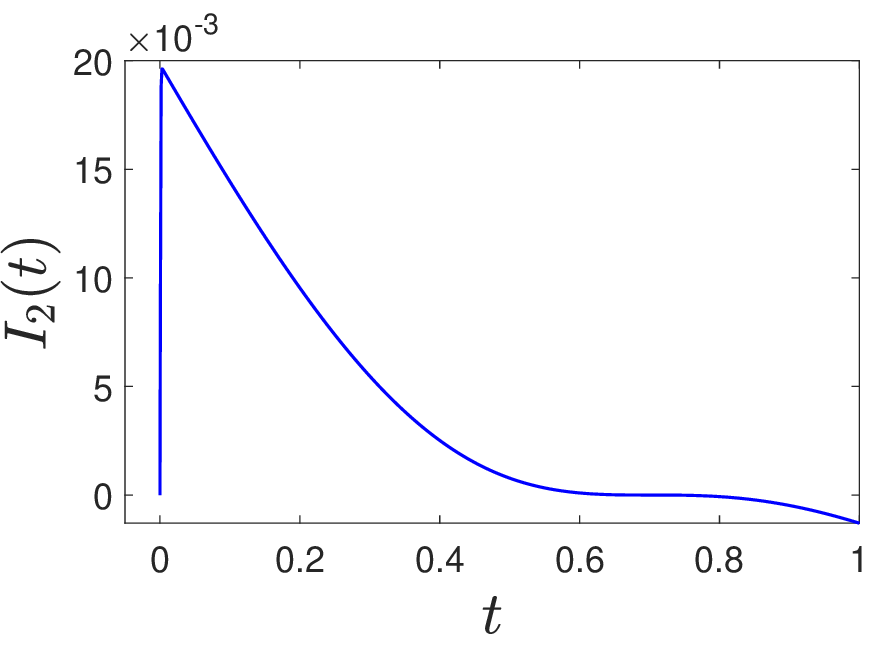}

\vspace*{-3mm}

\caption{The example of the solution for the case usually encountered in practice when the inductance $L(t)$ is small:
The plots of the components  of the numerical solution of the DAE \eqref{DAE}, \eqref{NestCoef2DAE} with ${L(t)=10^{-3}}$, the functions \eqref{StepenFunc}, where $a=b=c=1$, and \eqref{Example5_param}, and with the initial values $t_0=0$, $x_0=(0,0,0)^\T$. The theoretical analysis shows that the exact solution is global, and the behavior of the presented numerical solution is consistent with the theoretical conclusion.}\label{Example5_1-3_2h}
 \end{figure}

Consider the case when the function $U(t)$ is not continuously differentiable, but only continuous. Take the voltage of the triangular shape (see Fig.~\ref{UTriang}):
\begin{equation}\label{UTriang-func}
{U(t)=10-|t-10-20\,k|},\quad t\in [20\,k, 20+ 20\,k],\quad k\in{\mathbb N}\cup\{0\}.
\end{equation}
In this case we use Propositions~\ref{remNum-meth} and~\ref{remModNum-meth}.  Also, take the functions
\begin{equation}\label{Example6_param}
I(t)=(t+1)^{-1}-1,\; G_3(t)=(t+1)^{-1},\; L(t)=10^{-1}+(t+1)^{-1},\; R_1(t)=e^{-t},\; R_2(t)=2+e^{-t},
\end{equation}
 \begin{wrapfigure}[5]{r}{0.3\linewidth}
\vspace*{-2mm}
\includegraphics[width=\linewidth]{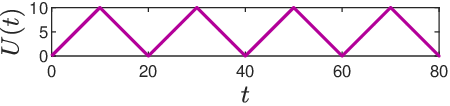}

\vspace*{-3mm}

\caption{The plot of $U(t)$}\label{UTriang}
 \end{wrapfigure}
and $\varphi_i$ ($i=1,2,3$) of the form \eqref{StepenFunc} where $a=b=c=1$, and the consistent initial values $t_0=0$, $x_0=(0,0,0)^\T$.  As proved above, the exact solution of the DAE is global. The numerical solution for this case was obtained by both method 1 and method 2. The plots of its components obtained by method 2 are presented in Fig.~\ref{Example6_1-3}.
\begin{figure}[H]%
 \centering
\includegraphics[width=4.8cm]{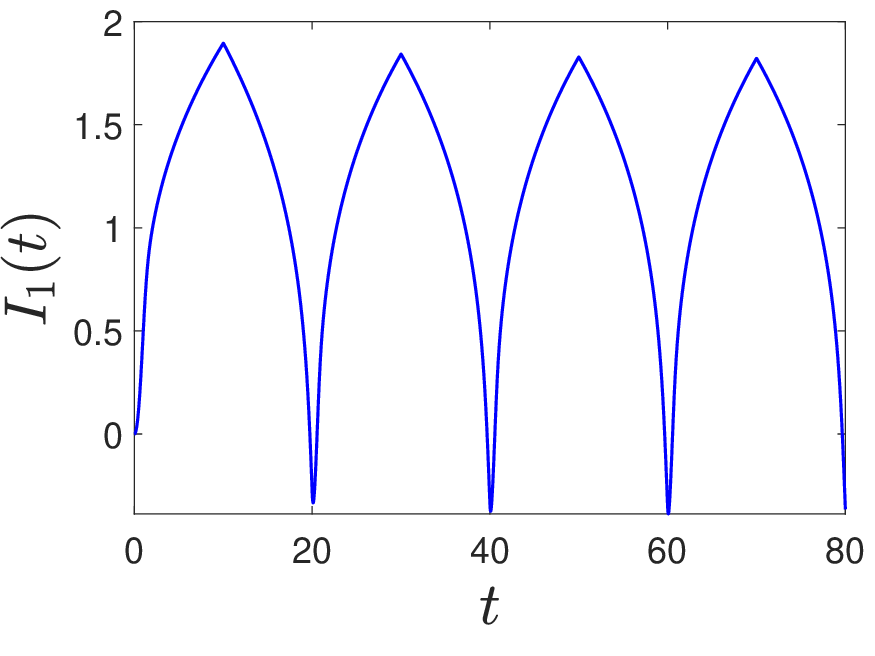}
\includegraphics[width=4.8cm]{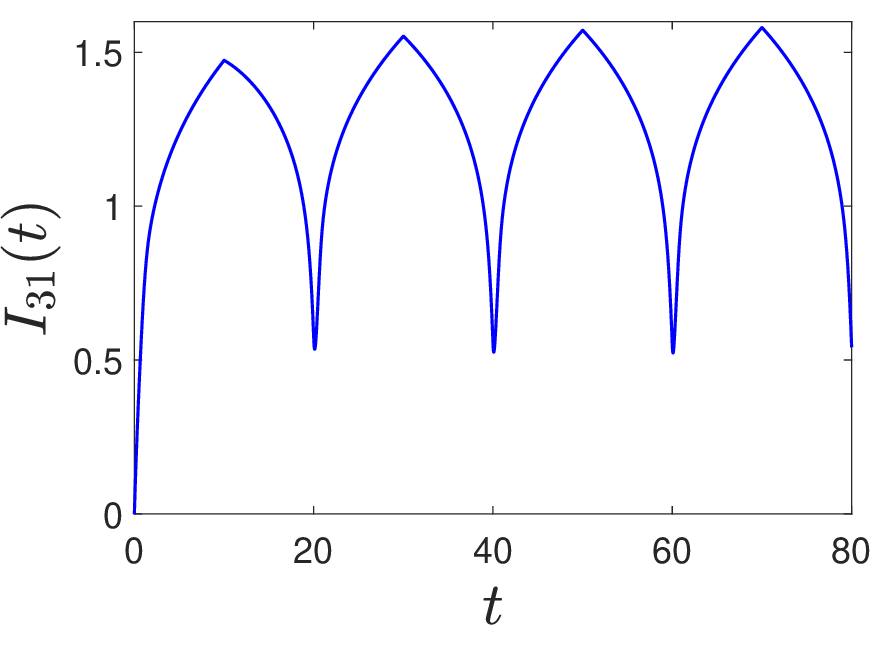}
\includegraphics[width=4.8cm]{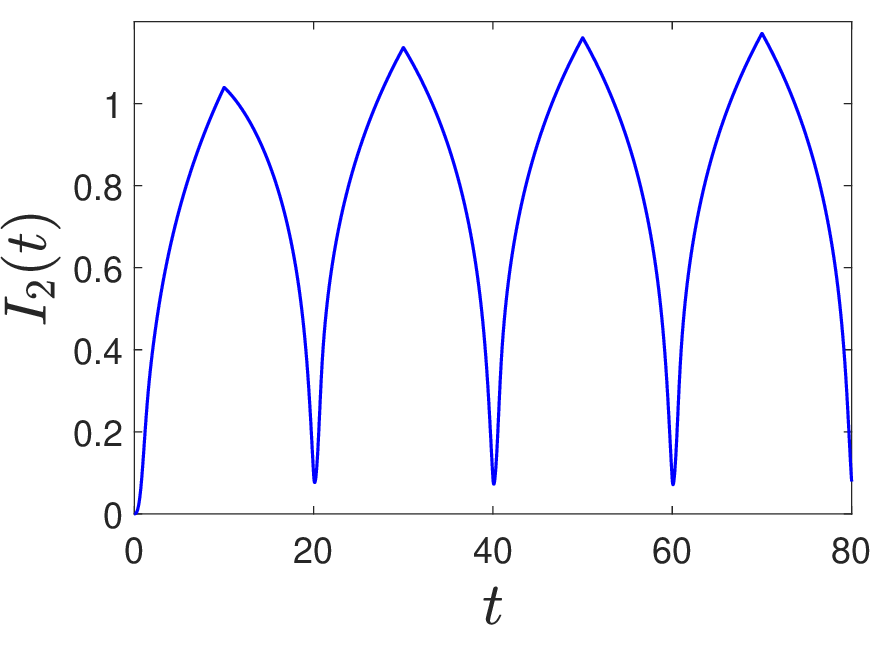}

\vspace*{-3mm}

\caption{The example of a solution for the case when the function $U(t)$ is continuous, but not differentiable: The plots of the components
$x_1(t)\!=\! I_1(t)$, $x_2(t)\!=\! I_{31}(t)$, $x_3(t)\!=\! I_2(t)$ of the numerical solution of the DAE \eqref{DAE}, \eqref{NestCoef2DAE} with the functions \eqref{UTriang-func}, \eqref{Example6_param}, \eqref{StepenFunc}, where $a\!=\! b\!=\! c\!=\! 1$, and the initial values $t_0\!=\! 0$, $x_0\!=\! (0,0,0)^\T$. The theoretical analysis shows that the exact solution is global and the plots demonstrate the same behavior pattern of the numerical solution.} \label{Example6_1-3}
\end{figure}

Now, consider the case when
\begin{align}\label{Example7_param}
& \varphi_1(x_1)\!=\! x_1^5,\, \varphi_2(x_3)\!=\! \frac{1}{3}\cos x_3,\, \varphi_3(x_2)\!=\! \frac{1}{3}\cos x_2,\, R_1(t)\!=\! 1+\frac{1}{2}\sin t,\, R_2(t)\!=\! 3+\frac{1}{2}\sin t,  \\
& L(t)\!=\! (t+10)^{-1/2}+10^{-2},\, I(t)\!=\! (\ln(t+1)+1)^{-1},\,  U(t)\!=\! 100(t+1)^{-2},\, G_3(t)\!=\! (t+1)^{-1}.
\end{align}
In this case the DAE \eqref{DAE}, \eqref{NestCoef2DAE}  is Lagrange stable since the conditions for the Lagrange stability, specified in Section \ref{NestElCirc-TheorAnal}, hold.  Take the consistent initial values $t_0=0$, $x_0=(4/3,0,0)^\T$.  The plots of the components of the numerical solution (computed by method 2) are given in Fig.~\ref{Example7_1-3}.
\begin{figure}[!h]%
 \centering
\includegraphics[width=4.8cm]{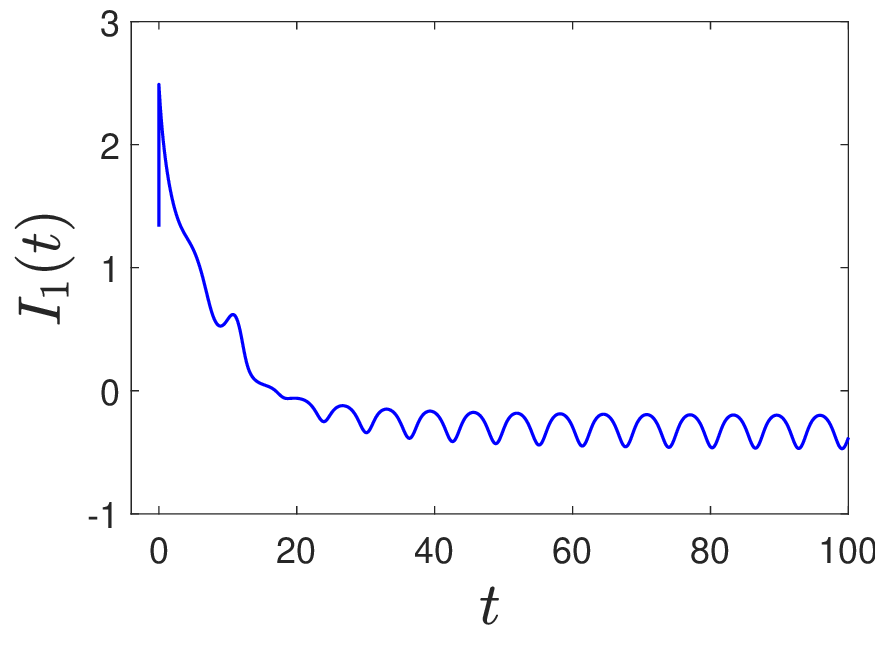}
\includegraphics[width=4.8cm]{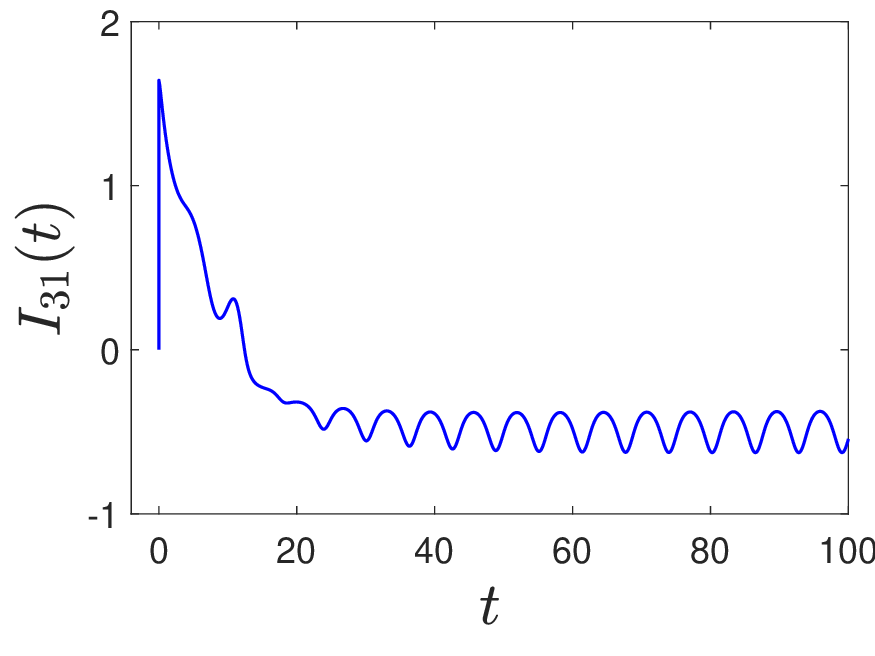}
\includegraphics[width=4.8cm]{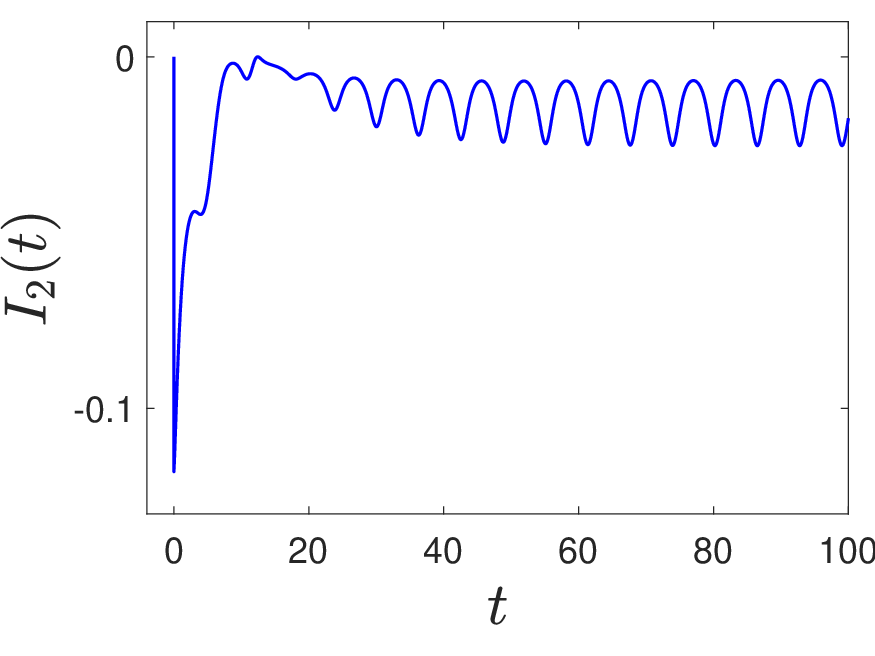}

\vspace*{-3mm}

\caption{The example of a Lagrange-stable solution: The plots of the components $x_1(t)=I_1(t)$, $x_2(t)=I_{31}(t)$, $x_3(t)=I_2(t)$ of the numerical solution of the DAE \eqref{DAE}, \eqref{NestCoef2DAE} with the functions \eqref{Example7_param} and the initial values $t_0=0$ and $x_0=(4/3,0,0)^\T$.
The presented graphs show the solution exists on the given interval and its norm does not increase with increasing time. When the interval is increased by a factor of 10, the qualitative picture of the solution behavior does not change.  Thus, the results of the numerical experiment are consistent with the theoretical conclusion about the Lagrange stability of the DAE. }
\label{Example7_1-3}
\end{figure}

Further, consider the particular case when $\varphi_i$, $i=1,2,3$, have the form \eqref{Sin_1-2-3}, i.e.,
$$
\varphi_1(y)=c\sin y,\quad  \varphi_2(y)=a\sin y,\quad \varphi_3(y)=b\sin y,\quad  a,\, b,\, c \in\R.
$$
Let $L, R_1, R_2\in C^1([t_+,\infty),\R)$,\, $I, U, G_3\in C([t_+,\infty),\R)$,\, $L(t), R_2(t), G_3(t)>0$ for $t\in [t_+,\infty)$,\,  the functions $G_3(t)$, $R_2(t)$ and numbers $a$, $b$ satisfy the condition \eqref{SinCond1} and $\inf\limits_{t\in [t_+,\infty)} R_1(t)=R_*>0$. Then, as shown in Section \ref{NestElCirc-TheorAnal}, for each initial point $(t_0,x_0)\in [t_+,\infty)\times \R^3$ satisfying \eqref{NestElSyst2x}, \eqref{NestElSyst3x} there exists a unique global solution of the DAE \eqref{DAE}, \eqref{NestCoef2DAE}, \eqref{Sin_1-2-3} with the
initial condition \eqref{ini}. It is readily verified that $t_0=0$ and $x_0=(0,0,0)^\T$ are consistent initial values if $I(0)=0$.

For the functions
\begin{equation}\label{Example_Sin1000_param}
I(t)=t,\; U(t)=t+1,\; G_3(t)=(t+1)^{-1},\; L(t)=1,\; R_1(t)=2+e^{-t},\; R_2(t)=0.1\, t+3
\end{equation}
and $\varphi_i$ ($i=1,2,3$) of the form \eqref{Sin_1-2-3} where $a=1/3$, $b=-1/2$ and $c=10$, and the consistent initial values $t_0=0$, $x_0=(0,0,0)^\T$, the components of the computed solution are plotted in Fig.~\ref{Example_1-3_Sin1000_h}.
 \begin{figure}[!h]%
 \centering
\includegraphics[width=4.7cm]{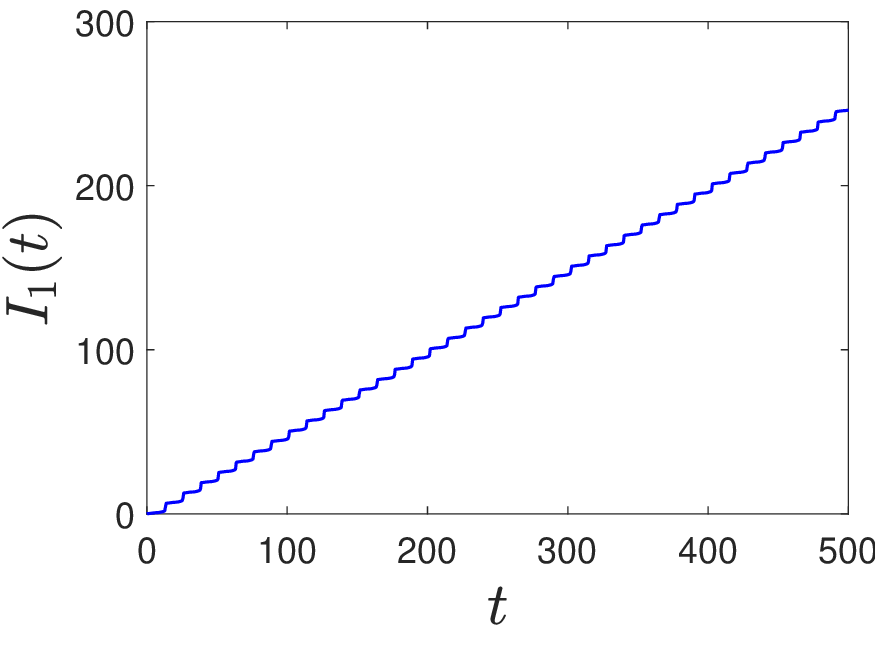}
\includegraphics[width=4.7cm]{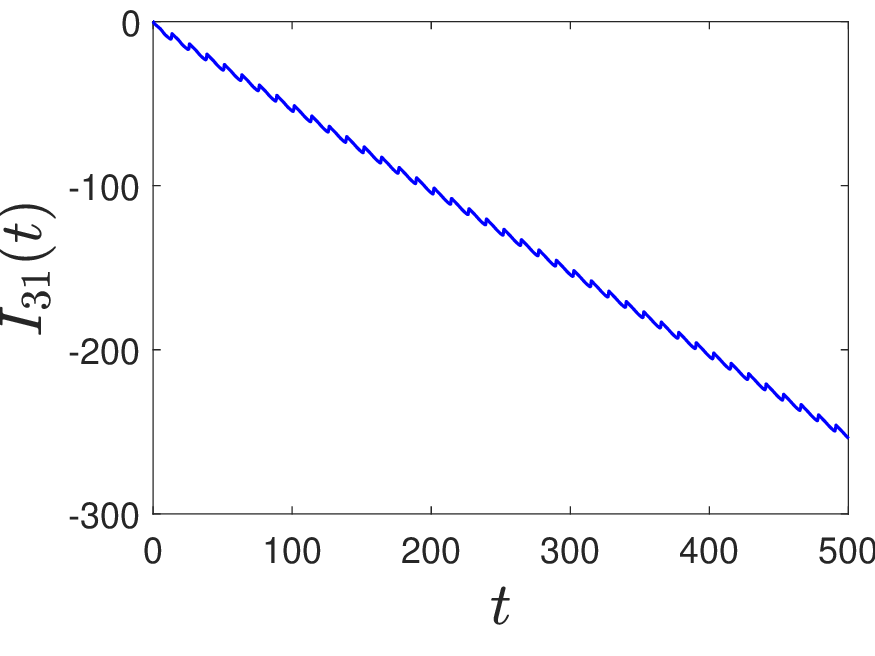}
\includegraphics[width=4.7cm]{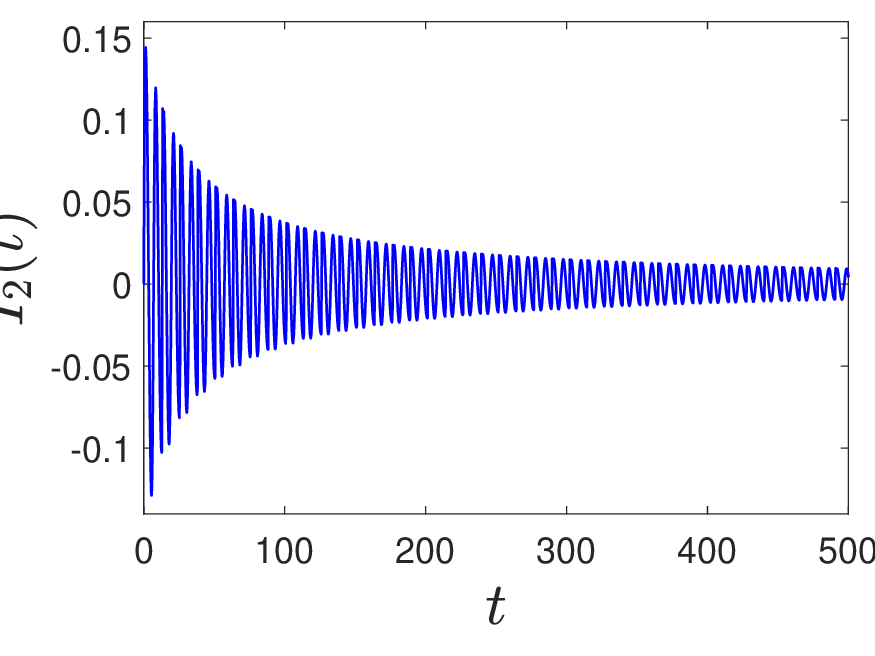}

 \vspace*{-3mm}

 \caption{The example of a global solution:  The plots of the components  of the numerical solution of the DAE \eqref{DAE}, \eqref{NestCoef2DAE} with the functions \eqref{Sin_1-2-3}, where $a=1/3$, $b=-1/2$ and $c=10$, and \eqref{Example_Sin1000_param}, and the initial values $t_0=0$, ${x_0=(0,0,0)^\T}$. The presented graphs demonstrate that the qualitative behavior of the numerical solution is consistent with the theoretical conclusion about the existence of the global exact solution (which, however, can be unbounded on $[0,\infty)$).}\label{Example_1-3_Sin1000_h}

\vspace*{-1mm}
 \end{figure}

Now, choose $\varphi_i$ ($i\!=\!1,2,3$) of the form \eqref{Sin_1-2-3}, where $a\!=\!1/3$, $b\!=\!-1/2$ and $c\!=\!5$, and
\begin{equation}\label{Example_Sin4_param}
I(t)\!=\! \sin t,\, U(t)\!=\!\frac{1}{(t\!+\! 1)^{5/2}} ,\,  G_3(t)\!=\! \frac{1}{t\!+\!1},\,  L(t)\!=\! 0.1\!+\!\frac{1}{t\!+\! 1},\, R_1(t)\!=\! 1\!+\! e^{-t},\, R_2(t)\!=\! \frac{\cos t}{2}\!+\!3.
\end{equation}
Then the conditions for the Lagrange stability, specified in Section \ref{NestElCirc-TheorAnal}, hold. The components of the solution computed for the consistent initial values $t_0\!=\!0$, ${x_0\!=\!(0,0,0)^\T}$ are plotted in Fig.~\ref{Example_1-3_Sin4_h}.
 \begin{figure}[!h]%
 \centering
\includegraphics[width=4.5cm]{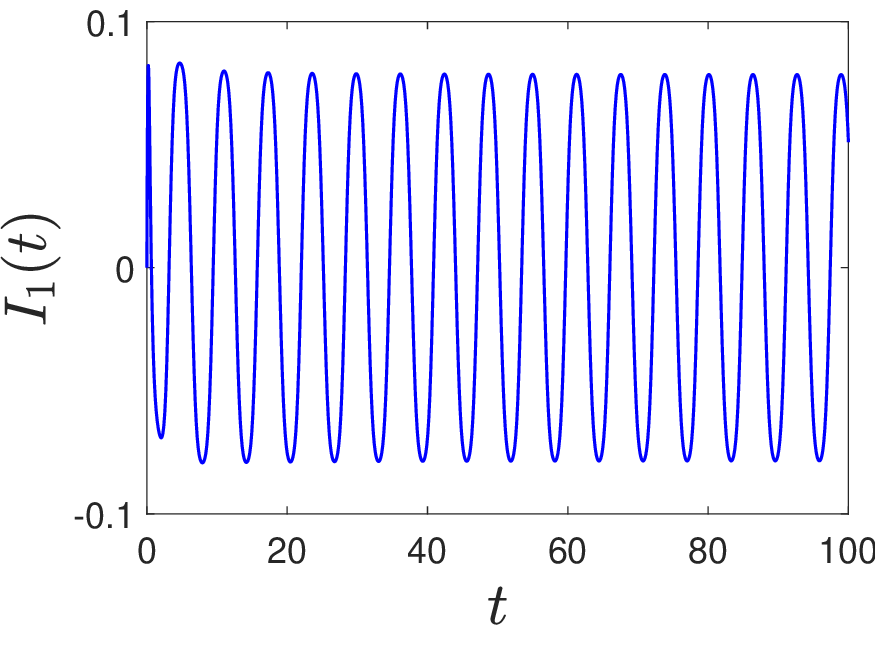}
\includegraphics[width=4.5cm]{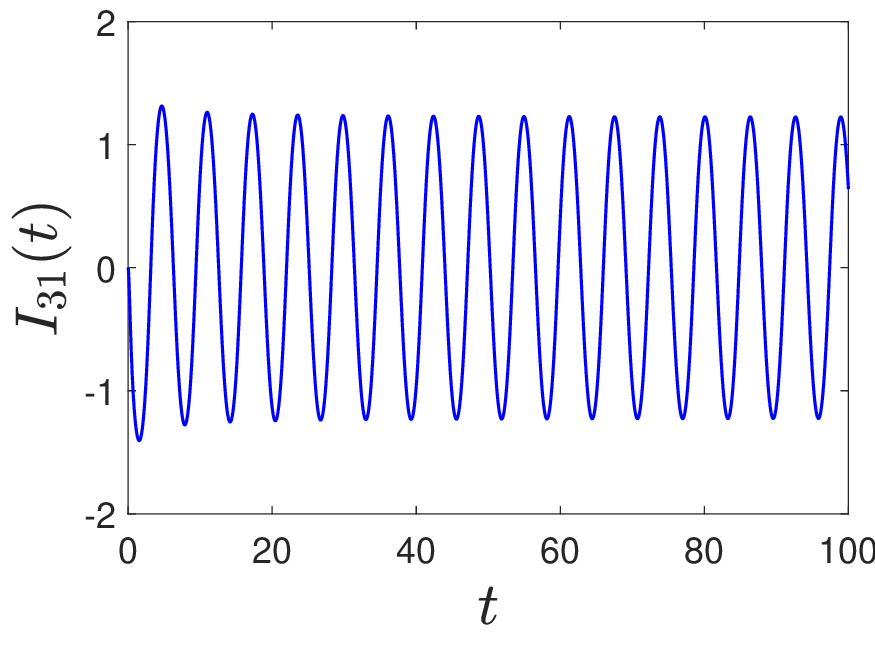}
\includegraphics[width=4.5cm]{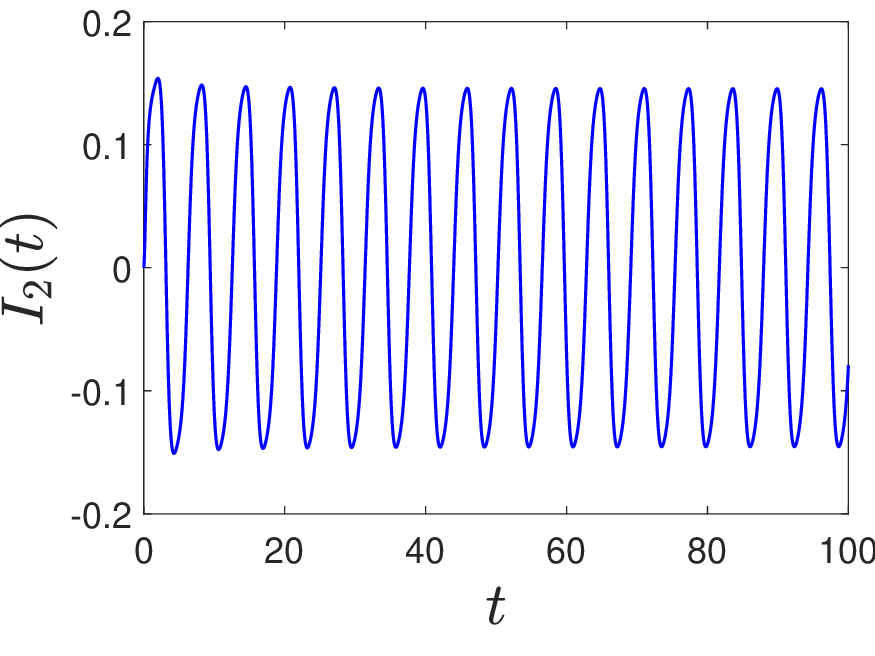}

\vspace*{-3mm}

\caption{The example of a Lagrange-stable solution:  The plots of the components of the numerical solution of the DAE \eqref{DAE}, \eqref{NestCoef2DAE} with the functions \eqref{Example_Sin4_param} and \eqref{Sin_1-2-3} where $a\!=\! 1/3$, $b\!=\! -1/2$, $c\!=\! 5$, and the initial values $t_0\!=\! 0$, $x_0\!=\! (0,0,0)^\T$. The plots demonstrate that the qualitative behavior of the numerical solution is consistent with the conclusion about the Lagrange stability of the exact solution.}\label{Example_1-3_Sin4_h}

\vspace*{-1mm}
 \end{figure}

The obtained numerical solutions show that the results of the theoretical research presented in Section~\ref{NestElCirc-TheorAnal} are consistent with the results of the numerical experiments.

\smallskip
We can conclude that methods 1, 2 are easy to implement, effective enough, and enable to carry out the numerical analysis of the global dynamics of mathematical models described by time-varying semilinear DAEs or the corresponding descriptor systems.

  \subsection{Comparative analysis of the methods and the experimental verification}\label{CompareMeth}

As stated in Theorems \ref{ThNum-meth}, \ref{ThNum-meth2}, methods 1 and 2 are convergent of order 1 and order 2 respectively, but at the same time method 2 requires greater smoothness for the functions in the equation, namely,  method~1 and  method~2 require that ${A, B\!\in\! C^2([t_0,T],\mathrm{L}(\Rn))}$, ${C_2\!\in\! C^2([t_0,T],(0,\infty))}$, ${f\!\in\! C^1([t_0,T]\times \Rn,\Rn)}$ and ${A, B\!\in\! C^3([t_0,T],\mathrm{L}(\Rn))}$, ${C_2\!\in\! C^3([t_0,T],(0,\infty))}$, ${f\!\in\! C^2([t_0,T]\times \Rn,\Rn)}$,  respectively.  However, if ${A, B\!\in\! C^1([t_+,\infty),\mathrm{L}(\Rn))}$, $C_2\!\in\! C^1([t_+,\infty),(0,\infty))$ and $f\!\in\! C([t_+,\infty)\times\Rn,\Rn)$ is such that $\partial f/\partial x$ is continuous on $[t_+,\infty)\times\Rn$, then the methods also converge, as stated in Propositions \ref{remNum-meth},~\ref{remModNum-meth}, and method 2 still converges faster.

In this section, we consider the DAE \eqref{DAE} (where for brevity we omit the dependence on $t$ in the notation of the variable vector $x(t)$\,)\, with $A(t)$, $B(t)$, $f(t,x)$ and $x$ of the form \eqref{NestCoef2DAE} where   \begin{gather*}
I(t)=\sin t,\;\; U(t)=(t+1)^{-1},\;\; G_3(t)=(t+1)^{-1},\;\; L(t)=500,\;\; R_1(t)=e^{-t},\;\; R_2(t)=2+e^{-t},  \\
\varphi_1(x_1) = x_1^3,\quad \varphi_2(x_3)= x_3^3,\quad \varphi_3(x_2)= x_2^3.
\end{gather*}
The physical interpretation of this DAE is given in Section~\ref{NestElCirc-TheorAnal}. We take the consistent initial values $t_0=0$, $x_0=(0,0,0)^\T$.

For the experimental verification and comparison of the rates of the convergence of methods 1 and 2, we will use the approach that is suitable for the case when an analytical (exact) solution cannot be obtained or its calculation is rather complicated.

Fig. \ref{Example_1-3_Com1}, \ref{Example_1-3_Com2} illustrate how the graphs of the components $x_1(t)\!=\!I_1(t)$ and $x_2(t)\!=\!I_{31}(t)$ of the solution \;$x(t)=(x_1(t),x_2(t),x_3(t))^\T=(I_1(t),I_{31}(t),I_2(t))^\T$,\;
computed by method~1 (the simple combined method \eqref{met1}--\eqref{met4}) and method~2 (the combined method with recalculation \eqref{NImpMet1}--\eqref{NImpMet6}) respectively, changes with the mesh refinement. These figures show that the graphs of the solution component $I_1(t)$ as well as $I_{31}(t)$ approach each other when decreasing a step size (${h=0.1,\,0.01,\,0.001}$), and hence method 1 as well as method 2 converges, at that, the graphs obtained by method 2 approach each other faster.
To show more clearly that the graphs of the components computed by method 2 for the step sizes $h=0.1$, $h=0.01$ and $h=0.001$ approach each other faster when decreasing the step size than those computed by method 1, the plots presented in Fig. \ref{Example_h_Com1}, \ref{Example_2h_Com1} and \ref{Example_h_Com2}, \ref{Example_2h_Com2},  are displayed on an enlarged scale in Fig.~\ref{Example_h-2h_Com1} and \ref{Example_h-2h_Com2} for both methods simultaneously.
It follows from Table~\ref{Table1} and Fig.~\ref{Example_1-3_Com1} that the rate of convergence of method 2 is higher than of method 1 when computing $I_1(t)$.
However, Table~\ref{Table2} and Fig.~\ref{Example_1-3_Com2} shows that there is not much difference in the rate of convergence of the methods when computing $I_{31}(t)$. This is due to the fact that $I_1(t)=x_1(t)$ is a component of the projection $x_{p_1}(t)=P_1(t)x(t)=(x_1(t),x_1(t),0)^\T$ in contrast to $I_{31}(t)=x_2(t)$ (see Section \ref{NestElCirc-TheorAnal}), and this will be explained in more detail below. Thus, the performed numerical experiment shows that method~2 converges faster than method~1.

The rate of convergence of method 2 increases mainly due to the faster convergence for the component $x_{p_1}(t)$, since in method 1 the method having the first order of convergence is applied to the ``differential part'' of the DAE (to the DE), and in method 2, due to recalculation, it has the second order of convergence. The Newton-type method with respect to $x_{p_2}(t)$  (which in general has the second order of convergence if functions in the equation are sufficiently smooth) is applied to the ``algebraic part'' of the DAE (to the AE) and the rate of convergence of method 2 for the component $x_{p_2}(t)$  increases due to the fact that the refined value of $x_{p_1}(t)$ is used in its recalculation.  This is shown in the proofs of Theorems \ref{ThNum-meth}, \ref{ThNum-meth2} as well as in the figures and tables above. The numerical experiments presented in Sections \ref{NumAnalPIMM}, \ref{NestElCirc-NumAnal} confirm the results of the above comparative analysis.
 \begin{figure}[!h]%
  \centering

\vspace*{-3mm}
\captionsetup{labelfont={rm}}
\subfloat[Method~1]{\includegraphics[width=6.8cm]{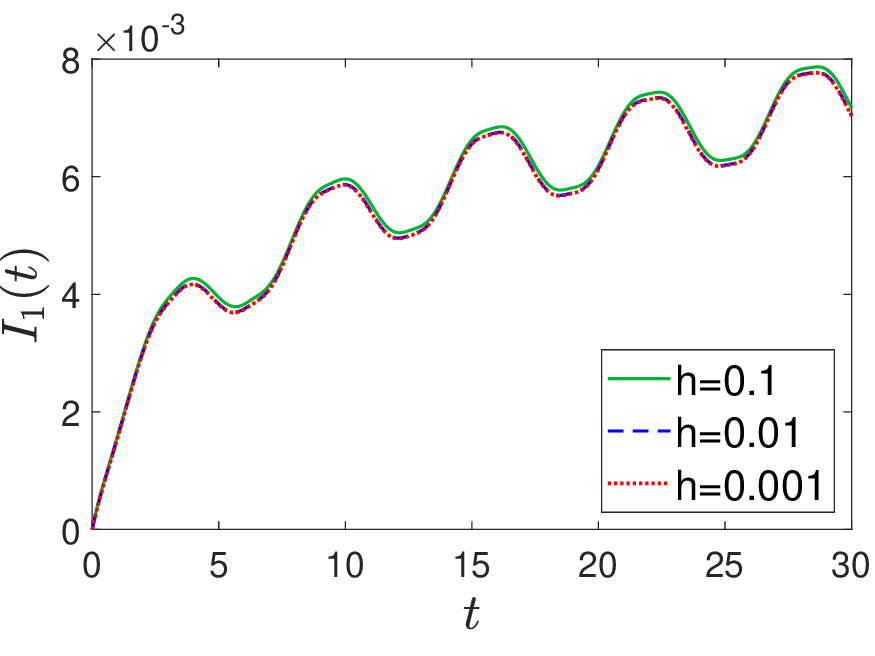}\label{Example_h_Com1}}
\subfloat[Method~2]{\includegraphics[width=6.8cm]{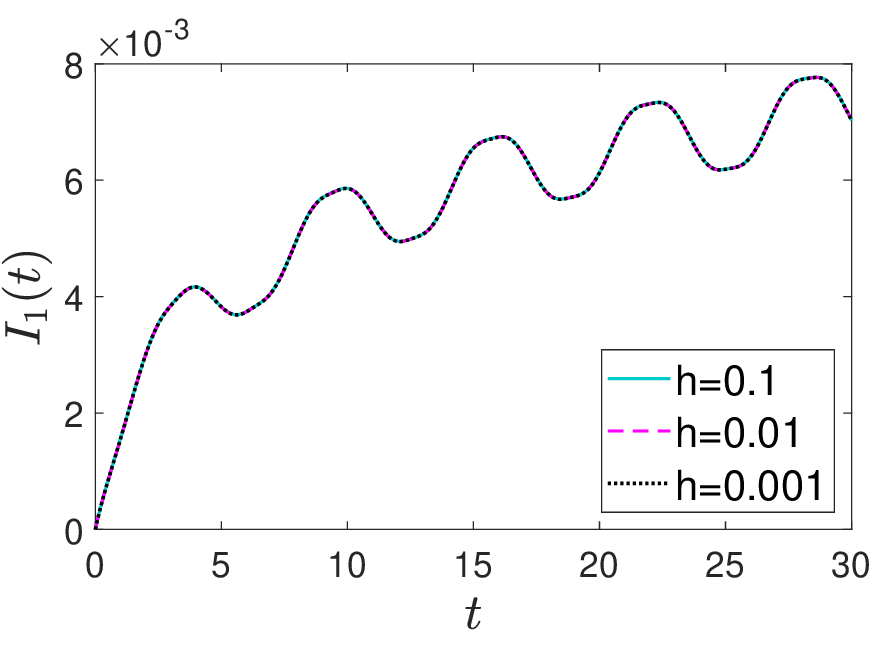}\label{Example_2h_Com1}}

\vspace*{-2mm}

\captionsetup{labelfont={rm}}
\subfloat[Methods~1 and 2 (an enlarged scale)]{\includegraphics[width=8.5cm]{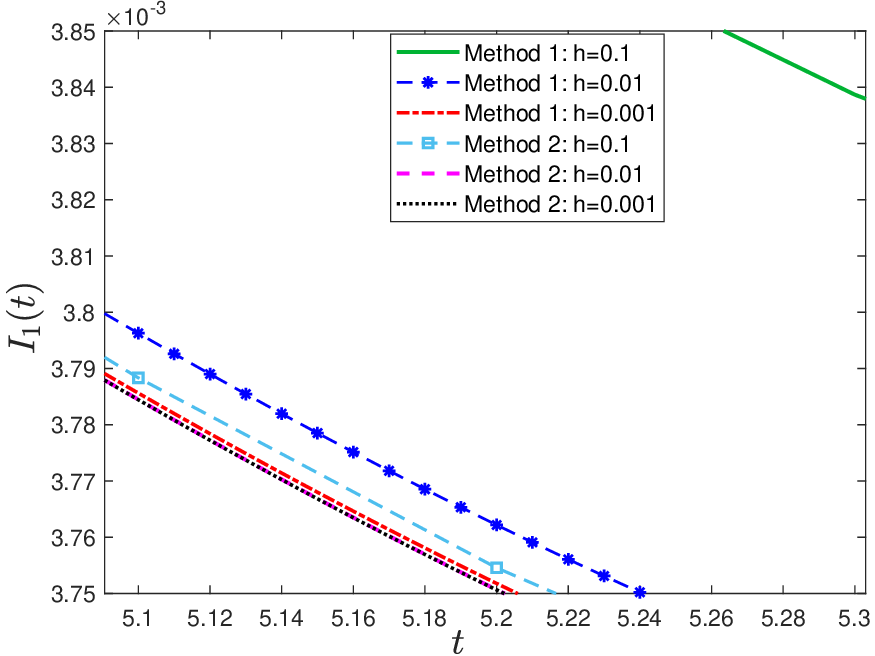}\label{Example_h-2h_Com1}}

\vspace*{-2mm}

\captionsetup{labelfont={sc}}
\caption{The plots of the solution component $I_1(t)$ computed by (a) method~1 and (b) method~2 with the step sizes $h=0.1$, $h=0.01$, $h=0.001$, and (c) the same plots on an enlarged scale for both methods simultaneously. It is shown that the graphs of the component approach each other when the step size decreases from $h=0.1$ to $h=0.001$, and hence methods 1 and 2 converge (since the graphs tend asymptotically to the graph of the component $I_1(t)$ of the exact solution as $h$ tends to $0$). In addition, the graphs obtained by method 2 approach each other faster when decreasing the step size than those obtained by method 1, and hence method 2 converges faster. To show this more clearly, the graphs are displayed on an enlarged scale (c), using the same colors.}\label{Example_1-3_Com1}
 \end{figure}

  \begin{table}[!h]%
 \captionsetup{font=small}
 \caption{\small The values of the solution component $I_1(t)$ obtained by methods 1 and 2 with the step size ${h=0.1,\, 0.01,\, 0.001}$ at ${t=0.2,\,0.4,\,0.6,\,0.8}$.
 The table shows that the values of the solution component obtained by method 2 for the step sizes $h=0.1$, $h=0.01$, $h=0.001$ approach each other (when decreasing the step size) faster than those obtained by method 1. Thus, the rate of convergence of method 2 is higher than of method 1.}\label{Table1}
\small

\vspace*{-2mm}

  \begin{tabular}{llllll}%
\hline\noalign{\smallskip}
& \multicolumn{1}{c}{$I_1(0.2)$} & \multicolumn{1}{c}{$I_1(0.4)$} & \multicolumn{1}{c}{$I_1(0.6)$} & \multicolumn{1}{c}{$I_1(0.8)$}\\
\cline{2-5} \rule{0cm}{0.3cm}
$h$     & \,Method 1\quad  Method 2 & \,Method 1\quad Method 2 & Method 1\; Method 2 & Method 1\; Method 2\!\!\! \\
\noalign{\smallskip}\hline\noalign{\smallskip}
$10^{-1}$\!\!  & 0.00038198\;  0.00036601 & 0.00070802\; 0.00068362 & 0.001006\; 0.000979 & 0.001296\; 0.001268\!\! \\
$10^{-2}$\!\!  & 0.00036690\; 0.00036530 & 0.00068447\; 0.00068202 & 0.000979\; 0.000976 & 0.001268\; 0.001265\!\! \\
$10^{-3}$\!\!  & 0.00036546\; 0.00036530 & 0.00068224\; 0.00068200 & 0.000977\; 0.000976 & 0.001265\; 0.001265\!\! \\
\noalign{\smallskip}\hline
 \end{tabular}
  \end{table}

 \begin{figure}[!h]%
\vspace*{-1mm}
\centering
\captionsetup{labelfont={rm}}
\subfloat[Method~1]{\includegraphics[width=6.7cm]{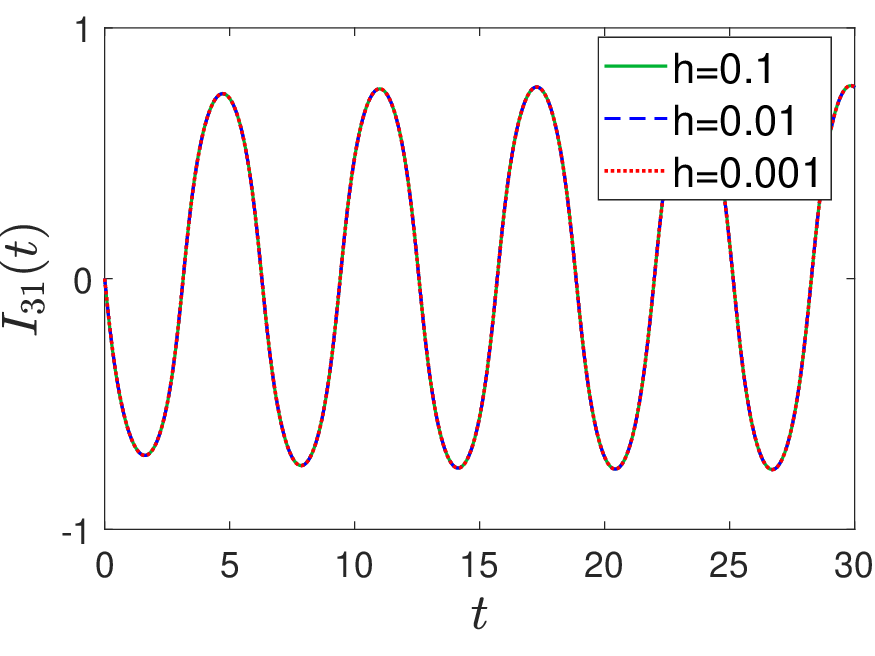}\label{Example_h_Com2}}
\subfloat[Method~2]{\includegraphics[width=6.7cm]{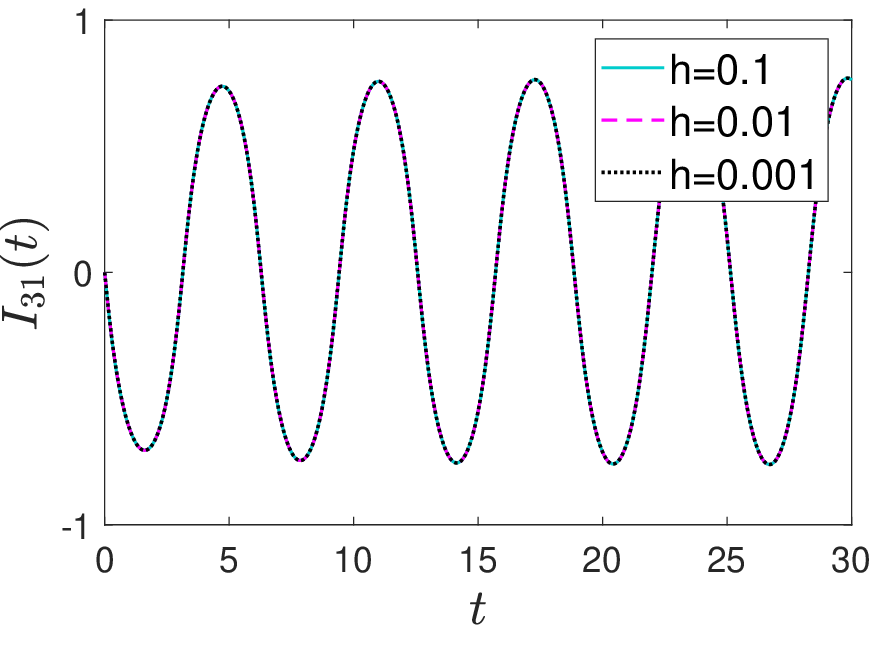}\label{Example_2h_Com2}}

\vspace*{-2mm}

\captionsetup{labelfont={rm}}
\subfloat[Methods~1 and 2 (an enlarged scale)]{\includegraphics[width=8.6cm]{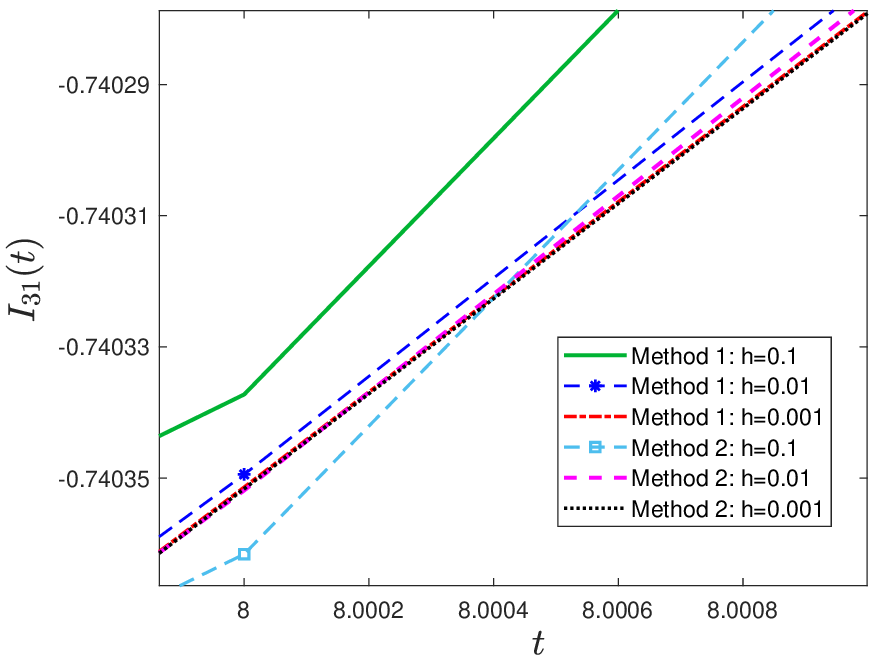}\label{Example_h-2h_Com2}}

\vspace*{-2mm}

\captionsetup{labelfont={sc}}
\caption{The plots of the solution component $I_{31}(t)$  computed by (a) method~1 and (b) method~2 with the step sizes $h=0.1$, $h=0.01$, $h=0.001$, and (c) the same plots on an enlarged scale for both methods simultaneously. The presented plots demonstrate the same as Fig. \ref{Example_1-3_Com1}, but for the component $I_{31}(t)$.}\label{Example_1-3_Com2}
 \end{figure}

  \begin{table}[!h]%
\vspace*{-2mm}

 \captionsetup{font=small}
\caption{\small The values of the solution component $I_{31}(t)$  obtained by methods 1 and 2 with the step size ${h=0.1,\, 0.01,\, 0.001,\, 0.0001}$ at $t=7.8,\, 7.9,\, 8$. The table shows that there is not much difference in the rate of convergence of the methods when computing $I_{31}(t)$, in contrast to the results obtained for the component~$I_1(t)$.}\label{Table2}
\small

 \vspace*{-2mm}

  \begin{tabular}{llll}%
\hline\noalign{\smallskip}
& \multicolumn{1}{c}{$I_{31}(7.8)$}
& \multicolumn{1}{c}{$I_{31}(7.9)$}
& \multicolumn{1}{c}{$I_{31}(8)$}\\
\cline{2-4} \rule{0cm}{0.3cm}
$h$     & \,Method 1\quad  Method 2 & \,Method 1\quad Method 2 & \,Method 1\quad Method 2\!\!\! \\
\noalign{\smallskip}\hline\noalign{\smallskip}
$10^{-1}$\!\!
  & -0.7446010\;  -0.7446247
  & -0.7449976\; -0.7450214
  & -0.7403373\; -0.7403616\!\! \\
$10^{-2}$\!\!
  & -0.7446068\; -0.7446091
  & -0.7450208\; -0.7450231
  & -0.7403495\; -0.7403518\!\! \\
$10^{-3}$\!\!
  & -0.7446089\; -0.7446091
  & -0.7450229\; -0.7450231
  & -0.7403514\; -0.7403516\!\! \\
$10^{-4}$\!\!
  & -0.7446091\; -0.7446091
  & -0.7450231\; -0.7450232
  & -0.7403516\; -0.7403516\!\! \\
 \noalign{\smallskip}\hline
 \end{tabular}
  \end{table}

\section*{Acknowledgments}
This work was partially supported by the Alexander von Humboldt Foundation (Project number 1153084-ESP-AHP) within the framework of the Alexander von Humboldt Professorship at the Friedrich-Alexander-Universit\"{a}t Erlangen-N\"{u}rnberg (the Chair for Dynamics, Control, Machine Learning and Numerics at) and the European Research Council (ERC) under the European Union's Horizon 2020 research and innovation programme (ERC Advanced Grant NEUROMORPH, no. 101018153).

\section*{Data and Code Availability}
The MATLAB code is available upon request from the author.

\end{document}